\providecommand{\keywords}[1]{\textbf{\textit{Key words and phrases }} #1}
\theoremstyle{definition}
\theoremstyle{remark}
\theoremstyle{definition}
\numberwithin{equation}{subsection}
\newcommand\du{\underline{D}}
\newcommand\eu{\underline{E}}
\newcommand\bu{\underline{B}}
\newcommand\id{\operatorname{id}}
\DeclareMathOperator\co{\operatorname{Cone}}
\DeclareMathOperator\ndo{\operatorname{End}}
\DeclareMathOperator\inli{\varinjlim}
\newcommand\modd{\operatorname{Mod}}
\newcommand\ns{\{0\}}
\newcommand\perpp{{}^{\perp}}
\newcommand\opp{^{op}}
\providecommand{\keywords}[1]{\textbf{\textit{Key words and phrases }} #1}
\newcommand{\xrar}{\xrightarrow}
\newcommand{\con}{\text{\,--\,}\mathrm{contra}}
\newcommand{\proj}{\mathrm{Proj}}
\newcommand{\projr}{{\mathrm{Proj}R\text{--}\operatorname{Mod}}}
\newcommand{\fprojr}{{\mathrm{FinProj}R\text{--}\operatorname{Mod}}}
\newcommand{\fproju}{{\mathrm{FinProj}U\text{--}\operatorname{mod}}}
\newcommand{\proju}{{\mathrm{Proj}U\text{--}\operatorname{mod}}}
\newcommand{\pr}{\mathrm{Pr}}
\newcommand{\Zbb}{\mathbb{Z}}
\newcommand{\tr}{\underline}
\newcommand{\K}{\mathscr{K}}
\newcommand{\PP}{\mathcal{P}}
\newcommand{\A}{\mathcal{A}}
\newcommand{\DE}{\mathscr{D}_E}
\newcommand{\ab}{\mathcal{A}b}
\newcommand{\SR}{S^{-1}R}
\newcommand{\End}{\mathrm{End}}
\newcommand{\obj}{\mathrm{Obj}\,}
\newcommand{\hw}{\tr{Hw}}
\newcommand{\Ht}{\tr{Ht}}
\newcommand{\lPr}{\langle\PP\rangle^{cl}}
\newcommand{\rmod}{R\,\text{--}\operatorname{Mod}}
\newcommand{\tif}{\tilde{F}}
\newcommand{\cu}{\tr{C}}
\newcommand\oo{{\pmb{1}}}
\newcommand{\z}{{\mathbb{Z}}}
\newcommand{\uu}{\mathcal{U}}
\DeclareMathOperator\adfu{\operatorname{AddFun}}
\DeclareMathOperator\adfupr{\operatorname{AddFun}_{\prod}}
\newtheorem{theorem}{Theorem}[subsection]
\newtheorem{proposition}[theorem]{Proposition}
\newtheorem{corollary}[theorem]{Corollary}
\newtheorem{lemma}[theorem]{Lemma}
\theoremstyle{definition}
\newtheorem{definition}[theorem]{Definition}
\newtheorem{note}[theorem]{Remark}
\numberwithin{equation}{subsection}
\begin{document}

\title{Completing hearts of triangulated categories via weight-exact localizations}
\author{Mikhail V.\,Bondarko, Stepan V.\,Shamov
\thanks{The work on the main results was supported by the Leader (Leading scientist Math) grant no. 22-7-1-13-1 of  the Theoretical Physics and Mathematics Advancement Foundation «BASIS».   
 The work of S. Shamov was performed at the Saint Petersburg Leonhard Euler International Mathematical Institute and supported by the Ministry of Science and Higher Education of the Russian Federation (agreement no. 075–15–2022–287).}}\maketitle

\begin{abstract} 
We study a weight-exact localization $\pi$ of a well generated (in particular, of a compactly generated) triangulated category $\cu$  along with the embedding of the hearts of adjacent $t$-structures coming from the functor right adjoint to 
 $\pi$.  For a wide class of weight-exact localizations we prove that the functors relating the corresponding four hearts are completely determined by the heart $\hw$ of the 
  weight structure on $\cu$ along with the set of 
 $\hw$-morphisms that we invert via $\pi$; it also suffices to know the 
   corresponding embedding of the hearts of $t$-structures.
 Our main results vastly generalize 
  the description of non-commutative localizations of rings in terms of weight-exact localizations given in an earlier paper of the first author. That paper was essentially devoted to weight-exact localizations by compactly generated subcategories, whereas in the current text we 
   focus on "more complicated" localizations.

 We recall that two types of 
 localizations of the sort we are interested in   
   were studied by L. Positselsky and others. They took  $\cu=D(\rmod)$;   the heart of the first $t$-structure (the canonical $t$-structure on $\cu$) was equivalent to $\rmod$, and the second heart was equivalent to the exact abelian category $\mathcal{U}\con\subset \rmod$ of $\uu$-{\it contramodules} (corresponding to a set of $\projr$-morphisms  $\mathcal{U}$ 
 related either to a homological ring epimorphism $u:R\to U$ or to an ideal of $I$ of $R$). The functor $\rmod\to \mathcal{U}\con$ induced by 
 $\pi$  is a certain completion one (and in the case where $I$ is a finitely generated ideal of a commutative ring $R$ the corresponding derived completions were 
   studied  in an important paper of Dwyer and Greenlees).
  Consequently, the hearts of the corresponding  weight structures are equivalent to $\projr$ and to  
 the subcategory of projective objects of $\mathcal{U}\con$, respectively. 
 Moreover, the connecting functors between these categories are isomorphic to ones coming from any weight structure {\it class-generated} by a single compact object whose endomorphism ring is $R^{op}$; in particular, one can take  $R=\z$ and $\cu=SH$ and re-prove some  
 important statements due to Bousfield.  
\end{abstract}

\keywords{Triangulated category,  localization, weight structure, $t$-structure, adjacent structures, adjoint functors, contramodules, projective objects, well generated categories,  non-commutative localizations.}

\tableofcontents

\section{Introduction}

\subsection{History and motivation}\label{shist}

In an 
 earlier  paper \cite{BoS18} it was demonstrated that the so-called {\it non-commutative ring localizations} of a ring $R$ 
 (that is,  ring homomorphisms that universally invert a set $S_0$ of morphisms between finitely generated projective $R$-modules; they are called {\it universal localizations} in  \S6 of \cite{KrS10}) are closely related to weight-exact localizations.

Let us explain this in some detail; the reader may skip this and proceed to \S\ref{sform} below. 
 Take the triangulated category $\cu^0=
 K^b(\fprojr)$ (the homotopy category of  bounded complexes of finitely generated projective $R$-modules),   localize $\cu^0$ by its triangulated subcategory $\du^0$ generated by cones of the elements of $S_0$, and 
 then the non-commutative localization in question is given by  $U=(\End^{\tr{E}^0}(\pi^0(R)))^{op}$; 
(here $\pi^0:\cu^0\to \eu^0=\cu^0/\du^0$ is our localization functor). A simple (and not really interesting) case here is $S_0=\{R\stackrel{s\id_R}
{\longrightarrow}R,\ s\in S\}$, where $S$ 
 is a (say) multiplicative subset of a commutative $R$; then $U=\SR$ and $\eu^0$ can be embedded into $K^b(\fproju)$.  

Next, recall that weight structures (see Definition \ref{dwso} below)  were independently introduced in \cite{Bon10} and \cite{konk}; they are certain counterparts of $t$-structures  that are defined somewhat similarly and are also closely related to them. 
  The category $\cu^0=K^b(\fprojr)$ is endowed with a certain weight structure $w^0=(\cu_{w^0\le 0},\cu_{w^0\ge 0})$ whose heart $\hw^0=\cu_{w^0\le 0}\cap \cu_{w^0\ge 0}$ is equivalent to $\fprojr$; though $\eu^0$ does not have to embed into 
 $K^b(\fproju)$, it is  endowed with a weight structure $w^{\eu^0}$ 
whose heart $\hw^{\eu^0}$ embeds into 
 $\fproju$. More generally, one can localize {\it any} weighted triangulated category $\cu^0$ (that is, $\cu^0$ is endowed with a weight structure $w^0$) such that $\hw^0
  = \fprojr$  by the triangulated subcategory $\du^0$ generated by cones of $S_0$ (in $\cu^0$) and obtain  that $U=(\End^{\tr{E}}(\pi^0(R)))^{op}$ and $\eu^0$ is endowed with a with a weight structure  "compatible with $w$" such that $\hw^{\eu^0}$ embeds into 
 $\fproju$; cf. Theorem 4.2.3 of \cite{BoS18}. 
Moreover, 
 certain "triangulated" arguments inspired by the theory of weight structures  naturally yield a method of computing this $U$; see  \S3 of ibid. 

Consequently, one may use the category  $K^b(\fprojr)$ and its localizations for calculating noncommutative localizations of $R$ and also for computing certain full subcategories of various localized categories; note here that there exist weight structures with "rather simple" hearts on various stable homotopy categories (see Theorem 4.1.1 of \cite{Bon21} and \S\ref{Stable} below). We would like to note that computing these hearts using just the definition of Verdier localizations appears to be a much more difficult task.

Now let us add adjoint functors and $t$-structures to this picture. Our 
 $S$-localization example above demonstrates that one cannot hope for a right adjoint to $\pi^0$ in the case  $\cu^0= K^b(\fprojr)$; it also appears that one cannot find $t$-structures related to our computations (in any reasonable way). Yet one can overcome these difficulties by means of passing to large(r) categories. The (rather easy) Proposition \ref{w^PP} below demonstrates that the category $\cu=D(\rmod)$ (the unbounded derived category of the category of all $R$-modules) is naturally endowed with a weight structure $w^{\pr}$ such that $\hw^{\pr}$ is equivalent to $\projr$ (and consists of complexes isomorphic to projective modules put in degree $0$).  Next one takes $\du$ to be the {\it localizing} subcategory of $\cu$ generated by cones of $S_0$, that is, $\du$ is the smallest strictly full triangulated subcategory of $\cu$ that is closed with respect to $\cu$-coproducts. Then 
 $\eu=\cu/\du$ is endowed with a weight structure compatible with $w^{\pr}$ such that $\hw^{\eu}\cong \proju$; this statement is easily seen to be a particular case of Theorem 4.3.1.4(2) of \cite{BoS18}. Moreover,  for the localization functor $\pi:\cu\to \eu$ there exists a (fully faithful exact) right adjoint functor $\pi_*$ 
 that is $t$-exact with respect to {\it adjacent} $t$-structures $t^{\eu}$ and $t^{can}$ (see  Theorem 4.3.1.4(1,4) of  loc. cit. and the easy Proposition \ref{w^PP}(\ref{class-wPP}) below). 
The latter statement (in contrast to the more general formulation in loc. cit.)
 is easily seen to 
 follow from Proposition 3.4 of  \cite{Dwy06} (cf. Remark \ref{rdwy} below); note that the latter paper essentially relates non-commutative localizations of $R$ to localizations of the sort that we have just described. 
Moreover, Theorem 4.3.1.4(1,4,5) of \cite{BoS18} immediately implies that for any weighted category $\cu$ endowed with a weight structure {\it class-generated} by a compact object $P$ with $R=(\End^{\tr{C}}(P))^{op}$ one can also construct a similar localization functor, its right adjoint, and the corresponding weight and $t$-structures, and the heart $\Ht^{\eu}$ is equivalent to the category of $U$-modules (cf. Theorem \ref{thea} below). Once again, we obtain a statement that describes a full subcategory of a localization that may be "quite complicated".

The main goal of the present paper is to generalize and extend these results; in particular, we wanted to allow $S_0$ to consists of morphisms between arbitrary projective $R$-modules. We also mention  soon (in Remark \ref{rrpos} below)
 certain examples related to this setting; their existence in the literature appears to increase the value of this paper significantly (and was quite surprising to the authors). 

 Note that (in contrast to the setting of \cite[Theorem 4.3.1.4]{BoS18}) it does not make much sense to assume that $\cu$ is {\it compactly generated} since 
 the localized category $\eu$ does not have to possess this property anyway. For this reason (and for the sake of generality) we only demand $\cu$ to be {\it well generated} (and just assume $w$ to be {\it smashing}); hence our results are more general than loc. cit. even in the case where $S_0$ consists of  $\fprojr$-morphisms. 

\subsection{Main general results of the paper and a discussion of examples}\label{sform}

Unfortunately, the reader would have to find all the definitions needed to understand the following  statements 
(all of which except 
 part \ref{idet} will be proved in \S\ref{sweadjt}) in \S\ref{Prelim} below. 
 This theorem combines 
 our main general results.

\begin{theorem}\label{thea}
Assume that  $\tr{C}$ is a well generated triangulated category endowed with a smashing weight structure $w$
 and $\tr{D}$ is its localizing subcategory  generated by a set  $\mathcal{U}_0\cup\mathcal{U}_1\cup\mathcal{U}_2$, where $\mathcal{U}_0$ consists of cones of a set $S_0$ of $\hw$-morphisms, the elements of $\mathcal{U}_1$ are left $w$-degenerate, and the  elements of   $\mathcal{U}_2$ are right $w$-degenerate.
\begin{enumerate}
\item \label{iexi}
Then  the Verdier localization  $\tr{E}=\tr{C}/\tr{D}$ is   locally small 
 and the localization functor  $\pi: \tr{C}\to\tr{E}$ 
 possesses a fully faithful exact right adjoint functor  $\pi_*$.

\item\label{wwwa}  
There exists a  weight structure ${w^{\tr{E}}}$ on  $\tr{E}$ such that $\pi$  is weight-exact. 

\item 
 There exist   $t$-structures $t$ and $t^{\tr{E}}$ on $\cu$ and  $\tr{E}$, respectively, that 
 are adjacent to  $w$ and $w^{\tr{E}}$, and $\pi_*$ is $t$-exact with respect to them.

Moreover, 
 the  Yoneda-type functor $Y: \Ht\to\adfu(\hw^{op},\ab)$,
 $M\mapsto \tr{C}(-,M)$, (resp. its $\Ht^{\tr{E}}$-analogue) gives an equivalence of  $\Ht$ (resp. of $\Ht^{\tr{E}}$) with the full exact abelian subcategory $\adfupr(\hw^{op},\ab)$ of $\adfu(\hw^{op},\ab)$ (resp.  $\adfupr(\hw^{\tr{E},op},\ab)$ of $\adfu(\hw^{\tr{E},op},\ab)$) that consists of those functors that respect small products (cf. Remark  \ref{rsmash}(2) below).

\item \label{ichar} 
 $\pi_*$ restricts to an equivalence of $\Ht^{\tr{E}}$ with the full exact subcategory of   $\Ht$ that is characterized by any of the following conditions.
\begin{enumerate}
\item $N\in\obj\Ht$ and  there are no non-zero morphisms from $ (\mathcal{U}_0\cup\mathcal{U}_0[-1])$ into $N$;
\item $N\in\obj\Ht$ and $ H_N(s)=\tr{C}(s,N)$ is  bijective for any $s\in S_0$.
\end{enumerate}
\item\label{isqa} 
The diagram 
	\begin{equation}\label{ecomma}
	\begin{CD}
 \hw @>{H_t^{\hw}}>> \Ht\\
@VV{\pi^{\hw}}V@VV{\tr{H\pi}_*^*}V \\
\hw^{\tr{E}} @>{H_{t^{\tr{E}}}^{\hw^{\tr{E}}}}>>\Ht^{\tr{E}}
\end{CD}
\end{equation}
	is commutative up to an isomorphism; here $H_t^{\hw}$,  $H_{t^{\tr{E}}}^{\hw^{\tr{E}}}$, and ${\pi^{\hw}}$ are the corresponding restrictions of the (two 
	  $t$-structure cohomology) functors  $H_t$,  $H_{t^{\tr{E}}}$, and  $\pi$, respectively,  and $\tr{H\pi}_*^*$ is the left adjoint to the restriction of $\pi_*$ to $\Ht$; cf. Theorem \ref{pi_&_G}(\ref{icomm}) below for some additional detail on these functors.
	
Moreover, 	$H_t^{\hw}$ (resp. $H_{t^{\tr{E}}}^{\hw^{\tr{E}}}$) gives an equivalence of $\hw$ (resp. $\hw^{\tr{E}}$) with the full subcategory of projective objects of the abelian category  $\Ht$ (resp. $\Ht^{\tr{E}}$). 

\item\label{idet} The diagram (\ref{ecomma}) is essentially canonically determined  by the couple $(\hw,S_0)$  as well as by $(\Ht,\pi_*(\Ht^{\eu}))$; cf. Theorem \ref{tp01ext}(2), its proof, and Remark \ref{rdetht} for more detail on this statement.

\item\label{iessi}
 If $\mathcal{U}_1=\mathcal{U}_2=\varnothing$ then
 $M\in\obj\tr{C}$ belongs to the essential image class $\pi_*(\tr{E})$ if and only if $H_t(M[i])\in \pi_*(\Ht^{\tr{E}})$ 
 for all $i\in\Zbb$. 

\end{enumerate}\end{theorem}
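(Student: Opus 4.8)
The plan is to rewrite both sides of the asserted equivalence as the \emph{same} family of ``$S_0$-locality'' conditions --- one inside $\cu$, one inside $\Ht$ --- after which the equivalence reduces to the single fact that, for $X\in\obj\hw$, the functor $\cu(X,-)$ factors (naturally) through the $t$-cohomology $H_t$.

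First I would recall that the fully faithful right adjoint $\pi_*$ of Theorem \ref{thea}(\ref{iexi}) has essential image $\pi_*(\tr{E})$ equal to the right orthogonal $\du^{\perp}=\{M\in\obj\cu\mid\cu(\du,M)=0\}$, as in the standard Bousfield-localization picture. Since $\mathcal{U}_1=\mathcal{U}_2=\varnothing$, the localizing subcategory $\du$ is generated by the \emph{set} $\mathcal{U}_0=\{\operatorname{Cone}(s)\mid s\in S_0\}$; as the left orthogonal of any class of objects is already a localizing subcategory, this gives $\du^{\perp}=\bigl(\bigcup_{j\in\Zbb}\mathcal{U}_0[j]\bigr)^{\perp}$, i.e. $M\in\pi_*(\tr{E})$ if and only if $\cu(\operatorname{Cone}(s),M[j])=0$ for all $s\in S_0$ and all $j\in\Zbb$. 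For $s\colon X\to Y$ an $\hw$-morphism, feeding the rotations of the triangle $X\xrightarrow{s}Y\to\operatorname{Cone}(s)\to X[1]$ into $\cu(-,M[j])$ yields short exact sequences identifying $\cu(\operatorname{Cone}(s),M[j])$ in terms of $\operatorname{Ker}(\cu(s,M[j]))$ and $\operatorname{Coker}(\cu(s,M[j-1]))$; hence ``$\cu(\operatorname{Cone}(s),M[j])=0$ for all $j$'' amounts to ``$\cu(s,M[j])\colon\cu(Y,M[j])\to\cu(X,M[j])$ is bijective for all $j$''. So $M\in\pi_*(\tr{E})$ iff $\cu(s,M[j])$ is bijective for all $s\in S_0$ and all $j\in\Zbb$.

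On the other side, Theorem \ref{thea}(\ref{ichar})(b) says that $N\in\obj\Ht$ belongs to $\pi_*(\Ht^{\tr{E}})$ precisely when $\cu(s,N)$ is bijective for every $s\in S_0$; taking $N=H_t(M[i])$ we see that ``$H_t(M[i])\in\pi_*(\Ht^{\tr{E}})$ for all $i$'' means ``$\cu(s,H_t(M[i]))$ is bijective for all $s\in S_0$ and all $i\in\Zbb$''. It therefore remains to match these two families of bijectivity conditions, and here I would invoke the key input from the theory of the adjacent pair $(w,t)$ (recalled in \S\ref{Prelim}; it also underlies Theorem \ref{thea}(\ref{isqa}), giving the equivalence of $\hw$ with the projective objects of $\Ht$ via $H_t^{\hw}$): for every $X\in\obj\hw$ the functor $\cu(X,-)$ annihilates $\cu_{t\geq 1}$ (clear, since $\obj\hw\subseteq\obj\Ht\subseteq\cu_{t\leq 0}$) and also annihilates $\cu_{t\leq -1}$ (the substantive half --- it says the objects of $\hw$ are ``$t$-projective'', and rests on the weight orthogonality axioms together with the precise form of the $t$-structure adjacent to $w$). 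Granting this, applying $\cu(X,-)$ to the truncation triangles $\tau_{\leq 0}N\to N\to\tau_{\geq 1}N$ and $\tau_{\leq -1}N\to\tau_{\leq 0}N\to H_t(N)$ produces, for each $N\in\obj\cu$, a zigzag $\cu(X,N)\xleftarrow{\sim}\cu(X,\tau_{\leq 0}N)\xrightarrow{\sim}\cu(X,H_t(N))$ of isomorphisms that is natural in $X\in\hw$. Taking $N=M[j]$ and using naturality along $s\colon X\to Y$, the homomorphism $\cu(s,M[j])$ is conjugate to $\cu(s,H_t(M[j]))$; thus one is bijective exactly when the other is. Running this over all $s\in S_0$ and $j\in\Zbb$, and combining with the two reductions above, proves the claim.

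The two reductions to $S_0$-locality are routine bookkeeping with long exact sequences; note that the hypothesis $\mathcal{U}_1=\mathcal{U}_2=\varnothing$ is used exactly at the step $\du^{\perp}=(\bigcup_j\mathcal{U}_0[j])^{\perp}$ --- for nonempty $\mathcal{U}_1,\mathcal{U}_2$ the orthogonal $\du^{\perp}$ would be cut out by further and less tractable conditions coming from the $w$-degenerate objects, which is why \ref{iessi} is stated only in this case. The step I expect to be the real obstacle is the ``$\cu(X,-)$ factors through $H_t$'' fact --- more precisely its $\cu_{t\leq -1}$ half; and I would be careful to check that the whole argument is insensitive to (un)boundedness of $M$, since only finitely many truncation triangles are invoked for each $N$, so no convergence of Postnikov towers is needed and one uses only that $\cu$ is well generated with $w$ smashing.
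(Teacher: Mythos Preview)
Your argument for part~\ref{iessi} is essentially the same as the paper's (carried out in Theorem~\ref{pi_&_G}(\ref{u012}.ii)): both reduce membership in $\pi_*(\tr{E})$ to bijectivity of $\cu(s,M[j])$ for all $s\in S_0$ and $j\in\z$, and then identify $\cu(s,M[j])$ with $\cu(s,H_t(M[j]))$ via the bifunctor isomorphism $\cu(X,N)\cong\cu(X,H_t(N))$ for $X\in\cu_{w=0}$, which the paper records as Proposition~\ref{Ht?Add}(\ref{i25}) (citing formula~(25) of \cite{Bon10}). One minor difference: for the forward implication the paper simply uses the $t$-exactness of $\pi_*$ (so $H_t(\pi_*Y[i])\cong\pi_*(H_{t^{\tr{E}}}(Y[i]))$), which is shorter than running your bijectivity equivalence in both directions.

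There is a slip in your justification of the key vanishing. You claim $\cu(X,-)$ annihilates $\cu_{t\geq 1}$ ``since $\obj\hw\subseteq\obj\Ht\subseteq\cu_{t\leq 0}$'', but in general $\hw\not\subseteq\Ht$: adjacency only says $\cu_{w\geq 0}=\cu_{t\geq 0}$, so $\cu_{w=0}\subseteq\cu_{t\geq 0}$, not $\cu_{t\leq 0}$. Moreover, even if $X\in\cu_{t\leq 0}$ held, that would yield $\cu_{t\geq 1}\perp X$, not $X\perp\cu_{t\geq 1}$. The correct (and equally easy) argument is the one the paper uses in~(\ref{eort}): from $X\in\cu_{w\leq 0}$ and $\cu_{t\geq 1}=\cu_{w\geq 1}$, weight-orthogonality gives $X\perp\cu_{t\geq 1}$; from $X\in\cu_{w\geq 0}=\cu_{t\geq 0}$, $t$-orthogonality gives $X\perp\cu_{t\leq -1}$. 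So neither half is more ``substantive'' than the other---both drop out immediately once adjacency is invoked---and your truncation-triangle zigzag then goes through as written.
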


\begin{note}\label{rrpos}
\begin{enumerate} 
\item 
Exact abelian subcategories 
of abelian categories that can be characterized by conditions similar to that in Theorem \ref{thea}(\ref{ichar})  
 were studied in detail in the important paper \cite{GeL91}.  
Note also that most of that paper is devoted to abelian subcategories of $\rmod$ that can be characterized this way; thus they can be studied by means of applying Theorem \ref{thea} to $\cu=D(\rmod)$ (see 
Proposition  1.1 of ibid. and Proposition  \ref{w^PP}(\ref{class-wPP}) below).
Moreover, Proposition  3.8 of ibid. is closely related to  non-commutative localizations and the aforementioned isomorphism $\Ht^{\eu}\cong U-\modd$ (in the case $\cu=D(R-\modd$).

\item 
 In the more concrete statements of this paper we will usually assume $\hw\cong \projr$ and $\Ht\cong\rmod$ (for some ring $R$). 

The reason for this is that in most of 
 earlier papers related to  Theorem \ref{thea} 
 only localizations of the category $D(\rmod)$ were considered (yet cf. \S\ref{Stable} below). Note however that (by the virtue of  Theorem \ref{thea}) 
 the computations in these papers also give much information on localizations of general well generated weighted triangulated categories with $\hw\cong \projr$.

\item We will concentrate on two (more or less) concrete sorts of morphism sets $S_0$ that yield examples that appear to be both (the most) "popular" and important. Both of them were called certain categories of contramodules in \cite{BaP20}, \cite{Pos16}, \cite{Pos17}, and \cite{Pos18}; we will define them in  \S\ref{Bazz} below. 

Yet the term {\it $I$-complete modules} (for a finitely generated ideal $I$ of a commutative ring $R$) that is a synonym for {\it $I$-contramodules} was used in \cite{DwG02} (cf. Remark \ref{rpos}(\ref{idwg} below), and the left adjoint to the corresponding embeddings of hearts (see  Theorem \ref{thea}(\ref{isqa}))   is a certain $I$-completion functor. 
 This motivated the title of our paper.
\end{enumerate}
\end{note}


\subsection{On the contents of the paper}.

In~\S\ref{Prelim} we recall some basics on  triangulated categories and weight and $t$-structures on them; we also prove the somewhat new  Proposition \ref{p01}. 

In~\S\ref{sgenwe} we prove (most of) our main general statements on weight-exact localizations; they yield Theorem \ref{thea}(\ref{iexi}--\ref{isqa},\ref{iessi}).
We also describe some criteria for the weight-exactness of the localization $\cu\to \cu/\du$ in the case where $\du$ is semi-simple; 
 we will need them for constructing certain (non)-examples in \S\ref{scount} only.

In~\S\ref{Loc_D(R)} we mostly study weight structures generated by a single compact object.  In the case 
 $\cu=D(\rmod)$ and for  two concrete types of the sets $S_0$ we obtain localization functors studied in several papers of L. Positselsky and others; the corresponding categories $\Ht^{\eu}$ are called categories of (certain) contramodules. We also prove (in Theorem \ref{tp01ext}) that weight-exact localizations of well generated categories $\cu$ and $\cu'$ with equivalent hearts are "closely related".  
We apply this observation to well generated weighted categories with $\hw\cong \projr$.

In ~\S\ref{Examples} we discuss one of the sources of  ($u-$)contramodules, that is, Ore localizations of rings by countable multiplicative subsets (see Proposition \ref{pore} for  the detail). We prove that this countability assumption cannot be lifted. 
 Lastly, we discuss certain (monogenic) stable homotopy examples for our results.

Note also that some information of the contents of our sections can be found in their beginnings.

\section{Preliminaries}\label{Prelim}

The main goal of this section is to recall some definitions and fact related (mostly) to triangulated categories and weight and $t$-structures on them. Probably, the most original statement of this section is Proposition \ref{p01} on isomorphism classes of objects of weight $[0,1]$ (in various weighted categories with equivalent hearts).

\subsection{Some notation and definitions}\label{general}
Let us intoduce some definition, conventions, and notation.

\begin{definition}\label{notation}\hspace{1cm}
\begin{itemize}
\item In this paper we will mostly consider locally small categories. The only exceptions that we will need are some categories of functors, and we do not need much of them. 


\item 
 For a (locally small) category $C$ and ${X,Y\in \mathrm{Obj} C}$ we will write ~$C(X,Y)$ for the set of $C$-morphisms from ~$X$ into~$Y$. 

\item  $\tr{C}$ will always denote a triangulated category, 
 $\tr{D}$ is its triangulated subcategory, and  $\tr{E}$ will denote the Verdier localization~$\tr{C}/\tr{D}$.

\item For $M,N\in\mathrm{Obj}\tr{C}$ we will write  $M\perp N$ if ${\tr{C}(M,N)=\{0\}.}$
 For $X,Y\subset\mathrm{Obj}\tr{C}$ we write ${X\perp Y,}$ whenever ${M\perp N}$ for all 
 ${M\in X, N\in Y.}$  For $\PP\subset \mathrm{Obj}\tr{C}$ we will use the notation $\PP^\perp$ for the class  ${\{N\in\mathrm{Obj}\tr{C}:\forall M\in\PP\enskip M\perp N\}.}$

\item For any ${A,B,C\in\mathrm{Obj}\tr{C}}$ the object $C$ is said to be an  \textit{extension of
 $B$ by means of $A$} if there exists a distinguished triangle  ${A\to C\to B\to A[1]}$.

\item  $\tr{C}$ is said to be \textit{idempotent complete} if any idempotent endomorphism in it yields a direct sum decomposition.
\end{itemize}
\end{definition}

\begin{lemma}[{\cite[Proposition 1.6.8]{Nee01}}] \label{l168}
If all countable coproducts exist in $\cu$ 
then $\cu$ is idempotent complete. \end{lemma}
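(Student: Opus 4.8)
The plan is to run the standard ``telescope'' argument, which uses only the countable-coproduct hypothesis together with the axioms TR1--TR3; in particular it does not covertly appeal to idempotent completeness, so citing the result later involves no circularity. Fix $X\in\obj\cu$ and an idempotent $e\in\cu(X,X)$, and put $f=\id_X-e$, so that $ef=fe=0$ and $e+f=\id_X$. Let $C=\coprod_{n\ge 0}X$ (it exists by hypothesis), let $s_e\colon C\to C$ be the endomorphism sending the $n$\dash th summand into the $(n+1)$\dash st via $e$, and complete $\id_C-s_e$ to a distinguished triangle $C\xrar{\id_C-s_e}C\xrar{p}Y\xrar{\partial}C[1]$, so that $Y$ is a homotopy colimit of $X\xrar{e}X\xrar{e}\cdots$. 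The goal is to construct $\alpha\colon X\to Y$ and $\beta\colon Y\to X$ with $\beta\alpha=e$ and $\alpha\beta=\id_Y$, which exhibits $e$ as a split idempotent and so proves the claim.

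The step I expect to be the main obstacle is showing that $\id_C-s_e$ is a \emph{split monomorphism}; this is the one place where idempotency of $e$ is used essentially. For the untwisted shift the naive partial-sum retraction does the job, but after the twist by $e$ it recovers only $\coprod_n e$, so one has to add a correction term built from $f$. Concretely, I would verify --- componentwise, using $e^2=e$, $ef=fe=0$ and $e+f=\id_X$ --- that the map $\sigma\colon C\to C$ whose restriction to the $m$\dash th summand equals $-\sum_{j=0}^{m-1}\mathrm{in}_j\circ e+\mathrm{in}_m\circ f$ satisfies $\sigma\circ(\id_C-s_e)=\id_C$. Granting this, rotating the triangle gives $(\id_C-s_e)[1]\circ\partial=0$, and composing with the retraction $\sigma[1]$ of the split monomorphism $(\id_C-s_e)[1]$ forces $\partial=0$; applying the homological functor $\cu(Y,-)$ to the triangle then shows that $p$ admits a section, so $p$ is an epimorphism.

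To finish, let $f_e\colon C\to X$ be the map that equals $e$ on every summand; then $f_e\circ(\id_C-s_e)=0$, so $f_e$ factors through $p$, say $f_e=\beta\circ p$ with $\beta\colon Y\to X$, and I set $\alpha=p\circ\mathrm{in}_0$. Immediately $\beta\alpha=f_e\circ\mathrm{in}_0=e$. For the other composite, a short telescoping computation exhibits $\mathrm{in}_0\circ f_e-\id_C$ as $(\id_C-s_e)\circ H$, where $H$ sends the $n$\dash th summand to $x\mapsto\sum_{j=0}^{n-1}\mathrm{in}_j(ex)-\mathrm{in}_n(fx)$; hence $p\circ(\mathrm{in}_0\circ f_e-\id_C)=0$, i.e.\ $\alpha\beta\circ p=p\circ\mathrm{in}_0\circ f_e=p$, and since $p$ is an epimorphism this gives $\alpha\beta=\id_Y$. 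Thus $e$ is split, with image $Y$. Apart from the construction of $\sigma$, all of this is routine bookkeeping with the universal property of $C$ and the long exact sequences attached to the triangle; it is worth recording that neither the octahedral axiom nor idempotent completeness entered the argument.
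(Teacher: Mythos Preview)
The paper does not prove this lemma at all; it is stated with a bare citation to \cite[Proposition~1.6.8]{Nee01}. Your argument is correct and is essentially the standard telescope (Eilenberg swindle) proof that appears in Neeman's book: form the homotopy colimit $Y$ of the constant $e$-sequence, show that $\id_C-s_e$ is a split monomorphism so that the defining triangle splits, and then produce $\alpha,\beta$ with $\beta\alpha=e$ and $\alpha\beta=\id_Y$. Your componentwise verifications of $\sigma\circ(\id_C-s_e)=\id_C$ and of the factorisation $\mathrm{in}_0\circ f_e-\id_C=(\id_C-s_e)\circ H$ check out, and the passage from ``$\id_C-s_e$ split mono'' to ``$\partial=0$'' to ``$p$ split epi'' is sound (though you could also invoke directly the standard equivalence: in a distinguished triangle, the first map is split mono iff the third map is zero iff the second map is split epi). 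There is nothing to compare beyond this, since the paper offers no alternative route.
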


We also need a few definitions related to coproducts and products.

All coproducts and products in this paper will be small.

\begin{definition}\label{Brown_Rep}
Let 
 $\tr{B}$ and $\tr{B}'$ be additive categories.
\begin{enumerate}
\item\label{ismash}  
 $ \tr{B}$ is said to be {\it smashing} if it is closed with respect to (small) coproducts.

Moreover, if this is the case then we will say that a class $\PP\subset\tr{B}$ is {\it smashing} if it is closed with respect to (small) $\bu$-coproducts.

\item\label{iadfupr} 
 We will write 
 $\adfupr(\tr{B},\tr{B}')\subset\adfu(\tr{B},\tr{B}')$ for the subcategory of those additive functors that respect  
  products.
\end{enumerate}
\end{definition}

\begin{note}\label{rsmash}
1. Obviously, $\adfupr(\tr{B},\tr{B}')$ is an exact abelian subcategory of the category $\adfu(\tr{B},\tr{B}')$.

2. If $\tr{B}=C^{op}$ then a functor $\tr{B}\to \tr{B}'$ respects products if and only if the corresponding contravariant functor from $C$ into $\tr{B}'$ converts all (small) coproducts that exist in $C$ into $\tr{B}'$-products.
\end{note}

Now we relate these definitions to triangulated categories.

\begin{definition}\label{generated}
 Let $\cu$ be a triangulated category. 
\begin{enumerate}
\item\label{homolog} 
 For an abelian category $\A$, we say that an additive functor  ${H:\tr{C}\to\A}$ is  \textit{homological}  if $H$ converts distinguished triangles into long exact sequences. 

If this is the case, then the corresponding contravariant functor $H'$ from $\tr{C}^{op}$ into $\A$
 is said to be \textit{cohomological}.

\item\label{B_R} $ \tr{C}$ is said to satisfy the {\it Brown representability property} if it is smashing and
 any cohomological functor 
in $\adfupr(\tr{C}^{op},\ab)$ (cf. Remark \ref{rsmash}(2)) is representable (by an object of $\cu$).

\item\label{localiz}  If  $\tr{C}$ is smashing and  ${\PP\subset\mathrm{Obj}(\tr{C})}$ then we will write  $[\PP]^{cl}$  for the smallest smashing class of objects of $\tr{C}$ that is closed with respect to extensions and contains ~$\PP.$

We will write ~${\lPr_{\tr{C}}}$ for the full subcategory of $\tr{C},$ whose object class equals  ${[\bigcup_{i\in\Zbb}\PP[i]]^{cl},}$
 and call it the \textit{localizing subcategory of $\tr{C}$ generated by~$\PP$.}

\item Assume that  an infinite  cardinal $\alpha$ either equals $\aleph_0$ or the successor cardinal $\alpha'+1$ for some cardinal $\alpha'$. Then we will say that a {\bf set}  $\PP\subset \obj \tr{C}$ is {\it $\alpha$-perfect} whenever $\tr{C}$ is smashing 
 and the functor $H^{\PP}=\coprod_{P\in \PP}\tr{C}(P,-),$ ~$\tr{C}\to \ab$ satisfies the following conditions for any set $I$ and $N_i\in \obj \cu$: $$H(\coprod_{i\in I}N_i)\cong \inli_{J\subset I,\ \operatorname{card}J<\alpha}H(\coprod_{i\in J}N_i),$$ and  the homomorphism $H(\coprod_{i\in I} f_i)$ is surjective whenever all $H(f_i)$ are (where $f_i$ are $ \tr{C}$-morphisms). In the case $\alpha=\aleph_0$ the elements of   $\PP$ are said to be {\it compact} (cf. Remark \ref{rwell}(1) below).

 \item\label{iwell} We will say that $\tr{C}$ is {\it $\alpha$-well generated}  if there exists an $\alpha$-perfect set $\PP\subset \obj \tr{C}$ such that ${\tr{C}=\lPr_{\tr{C}}.}$ 

$\tr{C}$ is  said to be  {\it  well generated} (resp. {\it compactly generated}) if it is  $\alpha$-well generated  for some $\alpha$ as above (resp. for $\alpha=\aleph_0$).
\end{enumerate}
\end{definition}

\begin{note}\label{rwell}
1. Obviously, 
$P\in \obj \cu$ is compact if and only if the functor ${H^M=\tr{C}(M,-):\tr{C}\to \ab}$ respects coproducts; this is also equivalent to the $\aleph_0$-perfectness of the set $\{P\}$.

2. Our definition of an $\alpha$-perfect set is easily seen to be equivalent to conditions (G2) and (G3) of \cite{krauwg}. Applying Proposition 8.4.1 of 
\cite{Nee01} we obtain that our definition of well generation is equivalent to the definition given in 
 \cite{krauwg}. By Theorem A of loc. cit. it is also equivalent to the (original) Definition 8.1.6 of  \cite{Nee01}. 
\end{note}

\subsection{Weight structures: basics}\label{w-str}

Recall that $\cu$ and $\cu'$ will always stand for some triangulated categories.

\begin{definition}\label{dws}
 
 Two subclasses ${\tr{C}_{w\leq0},\tr{C}_{w\geq0}\subset\mathrm{Obj}\tr{C}}$ will be said to define a {\it weight
structure} $w$ on 
  $\tr{C}$ (and $(\tr{C},w)$ is said to be a {\it weighted category}) whenever they satisfy the following assumptions: 
\begin{enumerate}
\item $\tr{C}_{w\leq0},\tr{C}_{w\geq0}$ are closed with respect to passing to direct summands;
\item ${\tr{C}_{w\leq0}\subset \tr{C}_{w\leq0}[1]}$ and ${\tr{C}_{w\geq0}[1]\subset\tr{C}_{w\geq0}}$;
\item\label{aort} ${\tr{C}_{w\leq0}\perp\tr{C}_{w\geq0}[1]};$
\item For any  ${M\in\mathrm{Obj}\tr{C}}$ there exists a \textit{weight decomposition} distinguished triangle ${LM \to M \to RM \to LM[1]},$ where ${LM\in\tr{C}_{w\leq0}}$ and ${RM\in\tr{C}_{w\geq0}[1]}$.
\end{enumerate}
\end{definition}

We will also need the following definitions related to weight structures.

\begin{definition}\label{dwso}

Assume that $w$ is a weight structure on $\cu$.

\begin{enumerate}

\item The full subcategory  $\tr{Hw}\subset \tr{C}$ whose object class equals 
 ${\tr{C}_{w=0}=\tr{C}_{w\geq0}\cap\tr{C}_{w\leq0},}$ is called the  \textit{heart} of $w$.

\item  $\tr{C}_{w\geq i}$ (resp. $\tr{C}_{w\leq i},$ $\tr{C}_{w=i}$) will denote the class
 $\tr{C}_{w\geq 0}[i]$ (resp. $\tr{C}_{w\leq 0}[i],$ $\tr{C}_{w=0}[i]$).

\item We say that $M\in\mathrm{Obj}\tr{C}$ is  \textit{left} (resp. \textit{ right) $w$-degenerate} 
if  ${M\in\bigcap_{i\in\Zbb}\tr{C}_{w\geq i}}$ (resp. ${M\in\bigcap_{i\in\Zbb}\tr{C}_{w\leq i}}$).
	
	$w$ is said to be {\it non-degenerate} if ${\bigcap_{i\in\Zbb}\tr{C}_{w\geq i}=\bigcap_{i\in\Zbb}\tr{C}_{w\leq i}=\{0\}}$.
	
	\item For $i\le j\in \z$ we will write $\cu_{[i,j]}$ for the class $\cu_{w\le j}\cap \cu_{w\ge i}$.

\item  If $(\tr{C}',w')$ is also a weighted category then an exact functor $F:\tr{C}\to \tr{C}'$ 
  is said to be
  \textit{weight-exact} if ${F(\tr{C}_{w\leq0})\subset \tr{C}'_{w'\leq0}}$
 and~${F(\tr{C}_{w\geq0})\subset \tr{C}'_{w'\geq0}}$.

\item\label{idescends} Let $\tr{E}$~be a Verdier localization of a weighted category $(\tr{C},w)$. 
 We say that $w$ \textit{descends} to $\tr{E}$ if there exists a  weight structure $w^{\tr{E}}$ of $\tr{E}$ such that the functor  $\pi:\tr{C}\to\tr{E}$ is weight-exact. 
 
 \item\label{iconn} A class $\PP\subset \obj\cu'$ is said to be {\it connective} (in $\cu'$) if $\PP\perp \cup_{i>0}\PP[i]$.
\end{enumerate}
\end{definition}

\begin{note}\label{rdescends}
If $w$ descends to $\tr{E}$ for $\tr{C},w,\pi$, and $\tr{E}$ as in Definition \ref{dws}(\ref{idescends}) then 
 Proposition 3.1.1(1) of  \cite{BoS19} gives the following description of $w^{\tr{E}}$: the class $\eu_{w^{\tr{E}}{\le 0}}$ (resp. $\eu_{w^{\tr{E}}{\ge 0}}$) is the class of all direct summands of elements of $\pi(\cu_{w\le 0})$ (resp. $\pi(\cu_{w\ge 0})$) in $\tr{E}$. 
\end{note}

\begin{definition}\label{w-generated} 
Let  $(\tr{C},w)$ be a weighted category, $\PP\subset\mathrm{Obj}\tr{C}$.
\begin{itemize} 
\item We will say that $w$ is \textit{smashing} if both 
 $\tr{C}$  and the class~$\tr{C}_{w\geq0}$ are smashing.

\item We say that 
  $\PP$ \textit{class-generates}~$w$ if $\tr{C}$ is smashing, ${\tr{C}_{w\geq0}=[\bigcup_{i\geq0}\PP[i]]^{cl}}$, and~${\tr{C}_{w\leq0}=[\bigcup_{i\leq0}\PP[i]]^{cl}.}$\footnote{Then $\PP$ is easily seen to be connective; see Lemma \ref{ll168}(3) below.}

\end{itemize}
\end{definition}

\begin{note}\label{sic!w}\hspace{1cm}
\begin{enumerate}
\item\label{nota} Weight structures were independently introduced in \cite{konk} (where they were called co-$t$-structures) and in~\cite{Bon10}. In loc. cit. 
 the classes  $\tr{C}_{w\geq0}$ and $\tr{C}_{w\leq0}$ were denoted by  $\tr{C}^{w\leq0}$ and $\tr{C}^{w\geq0}$, respectively.

\item For an additive category ~$\tr{B}$ one can (see  Remark 1.2.3(1)  of \cite{BoS18b}) define the so-called \textit{stupid} weight structure~${w^{st}}$  on the homotopy category~${\K^*(\tr{B})}$  of complexes (where $*$ stands either for  $b,+,-,$ or $\varnothing$)
 by setting ${\K^*(\tr{B})_{w^{st}\leq 0}}$
 (resp. ${\K^*(\tr{B})_{w^{st}\geq 0}}$) to be the class of complexes homotopy equivalent to those ones whose terms in negative (resp. positive) degrees are zero.  
\end{enumerate}
\end{note}

For some of our formulations, we will need the following relation of  "objects of weight $[0,1]$" to cones of $\hw$-morphisms.

\begin{proposition}\label{p01}
Assume that $(\tr{C},w)$ is a weighted category.

1. 
For $U\in \obj \tr{C}$ we have $U\in \tr{C}_{[0,1]}(=\tr{C}_{w\geq0}\cap \tr{C}_{w\leq1})$ if and only if $U$ equals $\co(f)$ for some $\hw$-morphism $f$.

Moreover, two object of $\tr{C}_{[0,1]}$ are isomorphic if and only if the corresponding two-term complexes are isomorphic in $\K(\hw)$ (that is, homotopy equivalent). 

2. Let $(\tr{C}',w')$ be a weighted category, and assume that and additive functor $F:\hw\to \hw'$ is an equivalence. Then there exists a unique bijection $\tif$ between the class of isomorphic classes of objects of $\tr{C}_{[0,1]}$ and that of $\tr{C}'_{[0,1]}$ 
 such that for any $\hw$-morphism $s$ we have $\tif([\co(s)])=[\co(F(s))]$.
\end{proposition}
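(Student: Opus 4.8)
The plan is to prove part 1 first, then bootstrap part 2 from it. For part 1, the "only if" direction: given $U \in \cu_{[0,1]}$, take a weight decomposition triangle $LU \to U \to RU \to LU[1]$ with $LU \in \cu_{w \le 0}$ and $RU \in \cu_{w \ge 1}$; since $U \in \cu_{w \ge 0}$, a standard weight-structure argument (using orthogonality $\cu_{w \le -1} \perp \cu_{w \ge 0}$ and the fact that $\cu_{w=0}$ is the heart) shows one may choose $LU \in \cu_{w=0}$ and $RU \in \cu_{w=1}$, i.e. $RU = V[1]$ with $V \in \cu_{w=0}$. Rotating, $U$ is the cone of a morphism $f : V \to LU$ of objects of $\hw$, as desired. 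The "if" direction is immediate: if $f$ is an $\hw$-morphism then its cone lies in $\cu_{[0,1]}$ because $\cu_{w \le 0}$ and $\cu_{w \ge 0}$ are extension-closed and shift appropriately (the cone of $V \to W$ with $V, W \in \cu_{w=0}$ sits in a triangle $W \to \co(f) \to V[1] \to W[1]$, giving $\co(f) \in \cu_{w \ge 0}$, and in $V \to W \to \co(f)$... actually rotate the other way: $V[-1] \to W[-1]$... more cleanly, $\co(f)$ is an extension of $V[1] \in \cu_{w \ge 1} \subset \cu_{w \le 1}$... I should just cite that cones of $\hw$-morphisms have weights in $[0,1]$, which is routine from the axioms). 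For the "moreover" clause, I would invoke the standard description of morphisms between such cones: two objects $\co(f), \co(g)$ of $\cu_{[0,1]}$ are determined up to isomorphism by the homotopy class of the two-term complex; the comparison uses that $\cu(\co(f), \co(g))$ is computed via the long exact sequences obtained by applying $\cu(-, \co(g))$ and $\cu(\co(f), -)$ to the defining triangles, together with the orthogonality $\cu_{w=0} \perp \cu_{w=0}[1]$, which kills the "homotopy" ambiguity and exactly recovers $\K(\hw)$-morphisms between the two-term complexes. This is the technical heart of part 1.

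For part 2, the functor $F : \hw \to \hw'$ is an equivalence, hence induces an equivalence $\K^{[0,1]}(\hw) \to \K^{[0,1]}(\hw')$ on homotopy categories of two-term complexes (an additive equivalence of additive categories induces an equivalence on homotopy categories of complexes of any fixed shape). Part 1 gives, on each side, a bijection between isomorphism classes of objects of $\cu_{[0,1]}$ (resp. $\cu'_{[0,1]}$) and isomorphism classes (= homotopy equivalence classes) of two-term $\hw$-complexes (resp. $\hw'$-complexes). Composing these three bijections yields the desired $\tif$, and the compatibility $\tif([\co(s)]) = [\co(F(s))]$ holds by construction since $F$ sends the complex $s$ to the complex $F(s)$. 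Uniqueness is then clear: every object of $\cu_{[0,1]}$ is of the form $\co(s)$ for some $\hw$-morphism $s$ by part 1, so the stated property pins $\tif$ down on all isomorphism classes.

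The main obstacle I anticipate is the "moreover" clause of part 1 — establishing precisely that $\Hom_{\cu}(\co(f), \co(g)) $ together with the isomorphism relation reproduces $\Hom_{\K(\hw)}$ of the associated two-term complexes, i.e. that no "extra" isomorphisms appear and that homotopic complexes give isomorphic cones. Both directions need care: one shows a chain map of two-term complexes induces a morphism of cones (functoriality of the cone, or the weak functoriality that suffices here), that homotopic chain maps induce the *same* morphism of cones up to the relevant identifications — or at least induce isomorphic cones — and conversely that an isomorphism $\co(f) \cong \co(g)$ can be lifted to a chain map (using the weight decomposition triangles as a substitute for functorial choices, and the vanishing $\cu_{w \le 0} \perp \cu_{w \ge 1}$ to control the lifting). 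This kind of argument is well known in the weight-structure literature (it is essentially the statement that $\cu_{[0,1]}$ is equivalent, as a "category up to isomorphism of objects", to $\K^{[0,1]}(\hw)$), so I would lean on that, but writing it cleanly is the one genuinely fiddly point; everything else is formal.
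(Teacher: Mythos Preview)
Your plan for part 2 is exactly the paper's: derive existence from the isomorphism criterion in part 1 (every object of $\cu_{[0,1]}$ is a cone, and cones with homotopy-equivalent two-term complexes are isomorphic), and derive uniqueness from the surjectivity of $s\mapsto\co(s)$. Nothing to add there.

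For part 1 the paper does not argue directly at all: it cites Proposition 1.2.4(10) of \cite{BoS19} for the cone description of $\cu_{[0,1]}$, and Theorems 3.2.2(II), 3.3.1(VI) together with Proposition 3.1.8(1) of \cite{Bon10} for the ``moreover'' clause. Those last references are the weight complex functor machinery: one has a functor $t:\cu\to K_{\mathrm{w}}(\hw)$ to the \emph{weak} homotopy category which sends $\co(f)$ to the two-term complex $(f)$; it is a functor, so it preserves isomorphisms, and then one uses that $K_{\mathrm{w}}(\hw)$ and $K(\hw)$ have the same isomorphism classes. Your direct argument for the first sentence of part 1 (refining a weight decomposition of $U\in\cu_{[0,1]}$ so that both pieces land in the heart) is correct and is essentially what the cited \cite{BoS19} proposition does.

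The one place to be careful is your claim that the long exact sequences together with $\cu_{w=0}\perp\cu_{w=0}[1]$ ``exactly recover $K(\hw)$-morphisms''. That is too strong: weight structures give $\hw\perp\hw[i]$ for $i>0$ but say nothing about $\hw[1]\perp\hw$, so $\cu(A[1],\co(g))$ need not vanish and $\cu(\co(f),\co(g))$ is in general \emph{not} isomorphic to $K(\hw)((f),(g))$. Concretely, lifting an isomorphism $\alpha:\co(f)\cong\co(g)$ to a chain map $(a,b)$ works exactly as you say (using $\cu_{w\le 0}\perp\cu_{w\ge 1}$), and similarly for $\alpha^{-1}$, but then showing the composite $(a'a,b'b)$ is homotopic to the identity, rather than merely extending to a triangle map with third component $\id_{\co(f)}$, needs an extra step --- one has $b'b-\id=fh_1$ and $a'a-\id=h_2 f$ for possibly different $h_1,h_2$, and reconciling them is where the weak-category subtlety enters. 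You flag this as ``the one genuinely fiddly point'', which is accurate; just be aware that the fix is not a small computation but is precisely the content of the \cite{Bon10} citations.
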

\begin{proof}
1. The first part of the assertion 
given by 
Proposition 1.2.4(10) of \cite{BoS19}.  The second part follows from Theorems 3.2.2(II) and 3.3.1(VI), and Proposition 3.1.8(1) of ~\cite{Bon10} (cf.   Remark \ref{sic!w}(\ref{nota})). 

2. This statement easily follows from assertion 1. Indeed, 
 the uniqueness follows from the fact that any element of  $\tr{C}_{[0,1]}$ is a cone of an $\hw$-morphism, and the existence follows from the isomorphism criterion provided by assertion 1.
\end{proof}

We will also need the following simple observations.

\begin{lemma}\label{ll168}
Assume that $(\cu,w)$ is a weighted category.

1. If 
 $\cu$ is idempotent complete then $\hw$ also is.

2. In particular, 
 both of these conditions are fulfilled if $\cu$ is smashing.

3. The class $\cu_{w=0}$ is connective (in $\cu$). 
\end{lemma}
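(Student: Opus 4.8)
The plan is to deduce all three assertions directly from the axioms of Definition \ref{dws}, together with Lemma \ref{l168} for assertion 2.

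For assertion 1, I would start with an idempotent endomorphism $e$ of some object $M\in\obj\hw=\cu_{w=0}$. Since $\cu$ is assumed idempotent complete, $e$ yields a direct sum decomposition $M\cong M_1\oplus M_2$ in $\cu$. The key observation is then that $M_1$ and $M_2$ are direct summands of $M\in\cu_{w=0}=\cu_{w\leq0}\cap\cu_{w\geq0}$, and both $\cu_{w\leq0}$ and $\cu_{w\geq0}$ are closed with respect to passing to direct summands by Definition \ref{dws}(1); hence $M_1,M_2\in\cu_{w=0}=\obj\hw$, so that the decomposition of $M$ induced by $e$ already lives in the full subcategory $\hw$. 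Thus every idempotent endomorphism in $\hw$ splits, i.e. $\hw$ is idempotent complete.

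For assertion 2, if $\cu$ is smashing then in particular all countable coproducts exist in $\cu$, so Lemma \ref{l168} gives that $\cu$ is idempotent complete; applying assertion 1 then yields that $\hw$ is idempotent complete as well, which is what "both of these conditions" refers to.

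For assertion 3, I would verify $\cu_{w=0}\perp\cu_{w=0}[i]$ for every $i>0$ directly. Indeed, $\cu_{w=0}\subset\cu_{w\leq0}$; and for $i\geq1$ we have $\cu_{w=0}[i]\subset\cu_{w\geq0}[i]\subset\cu_{w\geq0}[1]$, the last inclusion being obtained by applying $[1]$ to the chain $\cu_{w\geq0}[i-1]\subset\cdots\subset\cu_{w\geq0}$ coming from iterating $\cu_{w\geq0}[1]\subset\cu_{w\geq0}$ in Definition \ref{dws}(2). The orthogonality axiom Definition \ref{dws}(\ref{aort}) gives $\cu_{w\leq0}\perp\cu_{w\geq0}[1]$, whence $\cu_{w=0}\perp\cu_{w=0}[i]$, i.e. $\cu_{w=0}$ is connective. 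None of these steps presents a genuine obstacle; the only point requiring a little care is, in assertion 1, noticing that the splitting produced in the ambient category $\cu$ automatically has all its summands in $\hw$, so that idempotent completeness descends to the heart.
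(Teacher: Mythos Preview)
Your proof is correct and follows essentially the same approach as the paper: the paper cites \cite[Proposition 1.2.4(7)]{Bon22} for assertion~1 (your argument is the natural direct proof of that fact), combines it with Lemma~\ref{l168} for assertion~2 exactly as you do, and invokes the orthogonality axiom Definition~\ref{dws}(\ref{aort}) for assertion~3, which is precisely what you spell out.
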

\begin{proof}
1. Very easy; see Proposition 1.2.4.(7) of \cite{Bon22}.

2. Immediate from the previous assertion combined with Lemma \ref{l168}. 

3. Immediate from the orthogonality axiom (\ref{aort}) in Definition \ref{dws}. 
\end{proof}

\subsection{$t$-structures:  reminder}
\begin{definition}\label{dts}\hspace{1cm}
\begin{enumerate}
\item\label{itmain} A couple of subclasses  $\tr{C}_{t\leq 0}, \tr{C}_{t\geq 0}\subset \mathrm{Obj}\tr{C}$ is a 
 \textit{$t$-structure} on $\cu$ if the following assumptions are fulfilled:
\begin{enumerate}
\item $\tr{C}_{t\leq0}$ and $ \tr{C}_{t\geq0}$ are closed with respect to $\cu$-isomorphisms;
\item $\tr{C}_{t\leq0}\subset\tr{C}_{t\leq0}[1]$ and~${\tr{C}_{t\geq0}[1]\subset\tr{C}_{t\geq0};}$
\item $\tr{C}_{t\geq0}[1]\perp\tr{C}_{t\leq0};$
\item For any $M\in\mathrm{Obj}\tr{C}$ there exists a \textit{$t$-decomposition } distinguished triangle 

\begin{equation}\label{tdec} 
L_tM\to M \to R_tM\to L_tM[1],
\end{equation}
where $L_tM\in\tr{C}_{t\geq0},\ R_tM\in\tr{C}_{t\leq0}[-1].$
\end{enumerate}

\item The full subcategory~$\Ht\subset\tr{C}$, whose object class equals ~${\tr{C}_{t=0}=\tr{C}_{t\leq0}\cap\tr{C}_{t\geq0},}$ is called the 
 \textit{heart} of $t$. 

\item Assume that $\tr{C}$ and $\tr{C}'$ are endowed with  $t$-structures $t$ and~$t'$
 respectively, $F:\tr{C}\to\tr{C}'$ is an exact functor. We say that  $F$ is \textit{$t$-exact} if ${F(\tr{C}_{t\leq0})\subset \tr{C}'_{t'\leq0}}$ and
 ~$F(\tr{C}_{t\geq0})\subset \tr{C}'_{t'\geq0}$.

\item Assume that $\tr{C}$ is endowed both with a weight structure  $w$  and a  $t$-structure $t$.
 We say that $w$ and $t$ are \textit{adjacent} if ${\tr{C}_{w\geq0}=\tr{C}_{t\geq0}.}$
\end{enumerate}

\end{definition}

\begin{note}\label{t^can_w^st}\hspace{1cm}
\begin{enumerate}
\item\label{t^op} Clearly, the notion of $t$-structure is self-dual, that is, the couple   $t^{op}=(\tr{C}_{t\geq 0},\tr{C}_{t\leq 0})$ is a $t$-structure on the category $\tr{C}^{op}$.
\item\label{(Co)homol} In ~\cite{BBD82} where $t$-structures were introduced  and in several preceding papers of the first author the "cohomological convention" for $t$-structures was used. In the current text we use the so-called homological convention; in particular, we write  $\tr{C}_{t\geq0}$ instead of $\tr{C}^{t\leq0}$ (cf. Remark~\ref{sic!w}(\ref{nota})).
\item\label{w^st} For an abelian category $\A$, one can define the  \textit{canonical} $t$-structure on the derived category $D^*(\A)$
(where $*$ stands either for  $b,+,-,$ or $\varnothing$) as follows: take $D^*(\A)_{t^{can}\leq0}$
 (resp. $D^*(\A)_{t^{can}\geq0}$) to consist of those complexes whose cohomology in negative (resp. positive) degrees vanishes.
\end{enumerate}
\end{note}

Let us recall some basic properties of $t$-structures.

\begin{proposition}[{\cite[\S1.3]{BBD82}}]\label{sic!t}\hspace{1cm}
\begin{enumerate}
\item\label{L_r} 
The triangle (\ref{tdec}) is canonically (up to a unique isomorphism) and functorially determined by $M$. Moreover, $L_t$ is right adjoint to the embedding $\tr{C}_{t\geq 0}\to\tr{C}$. 
\item\label{Ht=A} The heart $\Ht$  is 
  an abelian category with short exact sequences corresponding to distinguished triangles in  $\tr{C}$.
\item\label{H^t} Take $H_t$ to be the composition 
$$L_t\circ[1]\circ R_t\circ [-1].$$
 Then $H_t(\obj\tr{C})\subset\tr{C}_{t=0}$,  and $H_t$ is a  homological functor (if considered as a functor into $\Ht$) that 
  restricts to a left adjoint to the embedding $\Ht\to \cu_{t\ge 0}$.
\item\label{tperp} $\cu_{t\le 0}=(\cu_{t\ge 0}[1])\perpp$. 
\end{enumerate}
\end{proposition}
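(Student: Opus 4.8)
The four assertions are classical, going back to \cite[\S1.3]{BBD82}; I would deduce them from Definition~\ref{dts} in the order (1), (4), (3), (2), since assertion~(4) and its dual are used repeatedly afterwards. \emph{For assertion (1)} the crucial remark is that for $A\in\cu_{t\ge0}$ and $R\in\cu_{t\le0}[-1]$ one has $\cu(A,R)=\cu(A,R[-1])=\{0\}$: both $R$ and $R[1]$ lie in $\cu_{t\le0}$ (using $\cu_{t\le0}\subset\cu_{t\le0}[1]$), while $A[1]\in\cu_{t\ge0}[1]$, so both groups vanish by the orthogonality axiom. Applying $\cu(A,-)$ to the $t$-decomposition triangle $L_tM\to M\to R_tM\to L_tM[1]$ and using this vanishing for $R=R_tM$, one gets that composition with $L_tM\to M$ induces an isomorphism $\cu(A,L_tM)\xrightarrow{\sim}\cu(A,M)$ for every $A\in\cu_{t\ge0}$. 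This universal property pins $L_tM$ down — hence also $R_tM$, being its cone — up to a unique isomorphism; reading the bijection naturally in $M$ simultaneously yields the functoriality of $L_t$ and identifies it with the right adjoint of $\cu_{t\ge0}\hookrightarrow\cu$.

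\emph{For assertion (4)}, the inclusion $\cu_{t\le0}\subset(\cu_{t\ge0}[1])\perpp$ is exactly the orthogonality axiom. Conversely, take $M\in(\cu_{t\ge0}[1])\perpp$; shifting the $t$-decomposition of $M[-1]$ produces a triangle $N\to M\to P\to N[1]$ with $N\in\cu_{t\ge0}[1]$ and $P\in\cu_{t\le0}$. The morphism $N\to M$ lies in $\cu(\cu_{t\ge0}[1],M)=\{0\}$, so the triangle splits and $P\cong M\oplus N[1]$; but then the resulting split monomorphism $N[1]\hookrightarrow P$ lies in $\cu(\cu_{t\ge0}[1],\cu_{t\le0})=\{0\}$ by the orthogonality axiom, whence $N[1]=0$, so $N=0$ and $M\cong P\in\cu_{t\le0}$. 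Dualizing via Note~\ref{t^can_w^st}(\ref{t^op}) gives $\cu_{t\ge0}={}^{\perp}(\cu_{t\le0}[-1])$; in particular $\cu_{t\le0}$ and $\cu_{t\ge0}$, being $\perp$-classes, are closed under extensions and retracts.

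\emph{For assertion (3)}, unwinding $H_t=L_t\circ[1]\circ R_t\circ[-1]$ one sees that $W:=R_t(M[-1])[1]\in\cu_{t\le0}$, so $L_t(W)\in\cu_{t\ge0}$, and a short computation with the decomposition triangle of $W$ (using the orthogonality axiom together with assertion~(4)) shows $L_t(W)\in\cu_{t\le0}$ as well, whence $H_t(M)\in\cu_{t=0}$. That $H_t$ is homological is the usual octahedral argument of \cite[\S1.3]{BBD82}. For the adjunction: given $A\in\cu_{t\ge0}$, shifting the $t$-decomposition of $A[-1]$ gives a triangle $N\to A\to H_tA\to N[1]$ with $N\in\cu_{t\ge0}[1]$; applying $\cu(-,B)$ for $B\in\cu_{t=0}\subset\cu_{t\le0}$ kills $\cu(N,B)$ and $\cu(N[1],B)$ by orthogonality, so precomposition with $A\to H_tA$ yields a natural isomorphism $\Ht(H_tA,B)=\cu(H_tA,B)\xrightarrow{\sim}\cu(A,B)$, i.e. $H_t|_{\cu_{t\ge0}}$ is left adjoint to $\Ht\hookrightarrow\cu_{t\ge0}$. \emph{Assertion (2)} is \cite[Th.~1.3.6]{BBD82}: $\cu_{t=0}$ inherits finite biproducts from $\cu$, and for $f\colon X\to Y$ in $\Ht$ one completes $f$ to a triangle $X\to Y\to Z\to X[1]$, checks via the extension-closure above that $Z\in\cu_{t\ge0}\cap\cu_{t\le0}[1]$, and realizes $\cok f$ and $\ke f$ as the two $t$-cohomology objects of $Z$; the axioms then provide the universal properties and the fact that a morphism in $\Ht$ which is simultaneously mono and epi is invertible.

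\emph{Main obstacle.} Everything except assertion~(2) is a direct manipulation of the four axioms of a $t$-structure. The genuinely non-formal input is~(2) itself — equivalently, the kernel/cokernel analysis and the homologicity of $H_t$ behind~(3) — which is precisely \cite[Th.~1.3.6]{BBD82}; I would invoke it rather than reprove it.
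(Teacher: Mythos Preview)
Your sketch is correct and reproduces the standard arguments from \cite[\S1.3]{BBD82}. Note, however, that the paper does not give its own proof of this proposition: it is stated as a citation of \cite[\S1.3]{BBD82} (the reference appears in the proposition header itself) and is used as a black box thereafter, so there is no ``paper's proof'' to compare your proposal against. In that sense you have done more than the paper requires---your detailed verification of (1), (4), and the adjunction part of (3) is fine, and your decision to invoke \cite[Th.~1.3.6]{BBD82} for (2) and for the homologicity of $H_t$ is exactly in the spirit of how the paper treats the whole proposition.
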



\begin{proposition}\label{base_t-str}
Assume that $\tr{C}$ and $\tr{C}'$  are endowed with  $t$-structures~$t$ and $t'$, respectively, and $G:\tr{C}\to\tr{C}'$ is a $t$-exact functor.
\begin{enumerate}
\item\label{pi_*(Ht)} Then the restriction of  $G$ to $\Ht$ 
 gives an exact functor 
 $G^{\Ht}$ between the abelian categories $\Ht$  and $\Ht'$.

\item\label{tadj} Assume that there exists a left adjoint $G^*$ to $G$. Then the restriction of the functor $H_t\circ G^*$ to $\Ht'$ is left adjoint to 
 $G^{\Ht}$.

\item\label{pi_*(M)} Assume that $G$ is fully faithful. Then an object $M$ of $\tr{C}$ belongs to  $\tr{C}_{t\leq 0}$ (resp. $\tr{C}_{t\geq0}$, 
 $\tr{C}_{t=0}$) if and only if $G(M)$ belongs to $\tr{C}'_{t'\leq 0}$
 (соотв. $\tr{C}'_{t'\geq0}$, $\tr{C}'_{t'=0}$).
\end{enumerate}
\end{proposition}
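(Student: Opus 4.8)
\textbf{Proof plan for Proposition \ref{base_t-str}.}

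The three parts are largely formal consequences of the definitions, so the plan is to treat them in order, reusing each part in the next. For part (\ref{pi_*(Ht)}), I would start from the observation that $t$-exactness of $G$ gives $G(\tr{C}_{t=0})\subset \tr{C}'_{t'=0}$, so $G$ restricts to an additive functor $G^{\Ht}\colon\Ht\to\Ht'$; exactness then follows because, by Proposition \ref{sic!t}(\ref{Ht=A}), short exact sequences in $\Ht$ (resp.\ $\Ht'$) are exactly the images of distinguished triangles in $\tr{C}$ (resp.\ $\tr{C}'$) all of whose vertices lie in $\tr{C}_{t=0}$ (resp.\ $\tr{C}'_{t'=0}$), and $G$ sends such a triangle to such a triangle. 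For part (\ref{tadj}), the idea is the standard "adjoint of a restriction" computation: for $M\in\obj\Ht'$ and $N\in\obj\Ht$, one has natural isomorphisms
\[
\Ht(H_t(G^*(M)),N)\;\cong\;\tr{C}(G^*(M),N)\;\cong\;\tr{C}'(M,G(N)),
\]
where the first isomorphism uses Proposition \ref{sic!t}(\ref{H^t}) (that $H_t$ restricted to $\tr{C}_{t\ge 0}$ is left adjoint to the embedding $\Ht\hookrightarrow\tr{C}_{t\ge 0}$, together with $N\in\tr{C}_{t\ge 0}$ and $M\in\tr{C}'_{t'\ge 0}$ so that $G^*(M)$ can be handled after applying $H_t$), and the second is the given adjunction $G^*\dashv G$. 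Here I should be slightly careful that $G^*(M)$ need not lie in $\tr{C}_{t\ge 0}$, which is exactly why the functor $H_t\circ G^*$ rather than just $G^*$ appears; the clean way to phrase it is to note that any map from $G^*(M)$ to an object of $\Ht$ factors uniquely through $H_t(G^*(M))$ because $\Ht\subset\tr{C}_{t\ge 0}$ and $\tr{C}_{t\le -1}\perp\tr{C}_{t\ge 0}$ (Proposition \ref{sic!t}(\ref{tperp})), so morphisms out of $G^*(M)$ into $\tr{C}_{t=0}$ only see the $H_t$-part. One then checks that the composite isomorphism is natural in $M$ and $N$, which gives the adjunction $H_t\circ G^*|_{\Ht'}\dashv G^{\Ht}$.

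For part (\ref{pi_*(M)}), the "only if" directions are immediate from $t$-exactness of $G$. For the "if" directions I would use full faithfulness together with the orthogonality characterization of the aisles: $\tr{C}_{t\le 0}=(\tr{C}_{t\ge 0}[1])\perpp$ by Proposition \ref{sic!t}(\ref{tperp}), and dually (via Remark \ref{t^can_w^st}(\ref{t^op})) $\tr{C}_{t\ge 0}$ is characterized by right-orthogonality to $\tr{C}_{t\le -1}$. Concretely, suppose $G(M)\in\tr{C}'_{t'\le 0}$ and take any $X\in\tr{C}_{t\ge 1}$; then $G(X)\in\tr{C}'_{t'\ge 1}$ by $t$-exactness, so $\tr{C}(X,M)\cong\tr{C}'(G(X),G(M))=0$ by full faithfulness, whence $M\in(\tr{C}_{t\ge 1})\perpp=\tr{C}_{t\le 0}$; the $\tr{C}_{t\ge 0}$ case is dual, and $\tr{C}_{t=0}$ follows by combining the two.

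The only genuinely delicate point is the adjointness bookkeeping in part (\ref{tadj}) — making precise why inserting $H_t$ is both necessary and sufficient, and verifying naturality — but this is routine once one invokes Proposition \ref{sic!t}(\ref{H^t},\ref{tperp}); parts (\ref{pi_*(Ht)}) and (\ref{pi_*(M)}) are essentially immediate. Since the statement is attributed to standard $t$-structure theory, I would keep the write-up brief and cite Proposition \ref{sic!t} freely rather than reprove the adjunction $H_t\dashv(\Ht\hookrightarrow\tr{C}_{t\ge 0})$.
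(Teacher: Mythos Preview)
Your treatment of part~(\ref{pi_*(Ht)}) matches the paper's. Your argument for part~(\ref{pi_*(M)}) is correct but takes a different route: you use the orthogonality characterization $\tr{C}_{t\le 0}=(\tr{C}_{t\ge 1})^\perp$ (and its dual) together with full faithfulness of $G$, whereas the paper applies $G$ to the $t$-decomposition triangle of $M$, observes that the result is a $t'$-decomposition of $G(M)$, and invokes uniqueness of $t$-decompositions (Proposition~\ref{sic!t}(\ref{L_r})) to conclude $G(R_tM)=0$, hence $R_tM=0$. Both arguments are short; the paper's avoids appealing to the dual identity $\tr{C}_{t\ge 0}={}^\perp(\tr{C}_{t\le -1})$, while yours avoids the functoriality of $t$-decompositions.

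Your argument for part~(\ref{tadj}), however, contains a genuine error. You assert that ``$G^*(M)$ need not lie in $\tr{C}_{t\ge 0}$'' and then attempt to rescue the isomorphism $\Ht(H_t(G^*(M)),N)\cong\tr{C}(G^*(M),N)$ via the orthogonality ``$\tr{C}_{t\le -1}\perp\tr{C}_{t\ge 0}$''. Both claims are false. The orthogonality for a $t$-structure goes the other way, $\tr{C}_{t\ge 0}\perp\tr{C}_{t\le -1}$; the direction you wrote fails already in $D(\ab)$, since $D(\ab)(\z/2\z[-1],\z)\cong\mathrm{Ext}^1_{\z}(\z/2\z,\z)\neq 0$. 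So your proposed factorization of maps $G^*(M)\to N$ through $H_t(G^*(M))$ does not hold for arbitrary $G^*(M)$. The correct fix is that $G^*(M)$ \emph{does} lie in $\tr{C}_{t\ge 0}$: since $G$ is $t$-exact it sends $\tr{C}_{t\le -1}$ into $\tr{C}'_{t'\le -1}$, so for $M\in\tr{C}'_{t'\ge 0}$ and any $Z\in\tr{C}_{t\le -1}$ one has $\tr{C}(G^*(M),Z)\cong\tr{C}'(M,G(Z))=0$, whence $G^*(M)\in{}^\perp(\tr{C}_{t\le -1})=\tr{C}_{t\ge 0}$. With this in hand, Proposition~\ref{sic!t}(\ref{H^t}) applies directly and your adjunction chain becomes correct. (The paper does not spell any of this out; it simply cites \cite[Proposition~1.3.17(iii)]{BBD82}.)
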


\begin{proof}
\ref{pi_*(Ht)}. Obviously $G$  restricts to a functor $\Ht\to\Ht'$ indeed, 
 and Proposition~\ref{sic!t}(\ref{Ht=A}) easily implies that this functor is exact (as a functor between abelian categories).

\ref{tadj}. Immediate from Proposition 1.3.17(iii) of \cite{BBD82}.

\ref{pi_*(M)}. It clearly suffices to verify 
  that the preimages of  $\tr{C}'_{t'\leq0}$ and
 $\tr{C}'_{t'\geq 0}$ equal $\tr{C}_{t\leq 0}$ and $\tr{C}_{t\geq 0}$, respectively. Moreover, duality (see Remark~\ref{t^can_w^st}(\ref{t^op})) allows us to study the 
 case   $G(M)\in\tr{C}'_{t'\geq0}$ only.

Since $G$ is $t$-exact, the triangle
$$
G(L_t M)\to G(M)\to G(R_tM)\to G(L_tM[1])
$$
is a $t'$-decomposition of the object $G(M)$. The uniqueness of  $t'$-decompositions
 (see Proposition~\ref{sic!t}(\ref{L_r})) implies $G(R_tM)=0$.
 Hence $R_tM$ is zero as well; thus  $M$ belongs to $\tr{C}_{t\geq0}$.
\end{proof}

\section{General statements on weight-exact localizations}\label{sgenwe}

\S\ref{sweadjt} contains a rich collection of (mostly) new and general results on weight-exact localizations and $t$-exact embeddings (with respect to adjacent $t$-structures).

In \S\ref{swelss} we prove some criteria for the weight-exactness of the localization $\cu\to \cu/\du$ in the case where $\du$ is semi-simple. This section is only needed for the construction of certain (non)-examples in \S\ref{scount} below.

\subsection{On weight-exact localizations and adjacent $t$-structures}\label{sweadjt}

We begin from recalling some properties of adjacent structures.

\begin{proposition}\label{Ht?Add}
Assume that  
 $w$ is a weight structure on $\tr{C}$.

\begin{enumerate}
\item\label{iht1}
Assume that $\tr{C}$ is also endowed with a $t$-structure $t$ adjacent to $w$.    
\begin{enumerate}
\item\label{Ht<Add}  
Then the corresponding Yoneda-type functor $Y: \Ht\to\adfu(\hw^{op},\ab)$,
 $M\mapsto H_M=\tr{C}(-,M)$  
  is exact and fully faithful, and its essential image lies in  $\adfupr(\hw\opp,\ab)$ (see Definition \ref{Brown_Rep}(\ref{iadfupr}) and Remark \ref{rsmash}(2)).
	
\item\label{ProjHt} 
 $\Ht$ 
	has enough projectives, and the restriction $H_{t}^{\hw}$ of the functor
 functor $H_t:\tr{C}\to \Ht$ provided by Proposition ~\ref{sic!t}(\ref{H^t})  
 to $\hw$ is fully faithful and sends $\hw$ 
   into the subcategory of  projective objects of $\Ht$. 
	
	Moreover, each projective object $N$ of $\Ht$ is a direct summand of  $H_t(P)$ for some $P\in \cu_{w=0}$, and we can also choose $P\in \cu_{w=0}$ such that $N\cong H_t(P)$ whenever $\hw$ is idempotent complete. 
	
	\item\label{i25} The bi-functor $\hw\opp\times \cu\to \ab,\ (M,N)\mapsto \cu(M,N),$ is isomorphic to the corresponding restriction of $\cu(-,H_t(-))$. 
	
	Consequently, 
	the composed functor $Y\circ H_t^{\hw}$  is isomorphic to the Yoneda-embedding $\hw\to \adfupr(\hw\opp,\ab)$. 
	
\end{enumerate}
\item\label{Ht=Add} Assume that $\tr{C}$  satisfies the Brown representability property (see Definition \ref{generated}(\ref{B_R}); in particular, $\tr{C}$ is smashing) and  $w$ is smashing. Then  a $t$-structure $t$ adjacent to $w$ exists and 
  the aforementioned functor $\Ht\to\adfupr(\hw^{op},\ab)$ is an equivalence.
	
\end{enumerate}
\end{proposition}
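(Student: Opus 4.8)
The plan is to split the statement into two parts: existence of an adjacent $t$-structure, and the equivalence claim. For the existence, the standard tool is adjoint functor theorems / Brown representability. Since $w$ is smashing, the class $\tr{C}_{w\ge 0}$ is closed under coproducts, and the inclusion $\tr{C}_{w\ge 0}\hookrightarrow \tr{C}$ preserves coproducts; combined with the Brown representability property of $\tr{C}$ this should yield a right adjoint to that inclusion, which is exactly the truncation functor $L_t$ needed to build a $t$-structure with $\tr{C}_{t\ge 0}=\tr{C}_{w\ge 0}$. Concretely, I would set $\tr{C}_{t\ge 0}:=\tr{C}_{w\ge 0}$ and $\tr{C}_{t\le 0}:=(\tr{C}_{w\ge 0}[1])^\perp$ (mimicking Proposition \ref{sic!t}(\ref{tperp})), verify the closure axioms (immediate from the weight-structure axioms for $w$), the orthogonality axiom (true by construction), and then produce $t$-decompositions: given $M$, apply the right adjoint to the inclusion to get $L_tM\to M$, complete to a triangle, and check the third vertex lies in $\tr{C}_{t\le0}[-1]$ using the adjunction counit together with the long exact sequence of $\tr{C}(N,-)$ for $N\in\tr{C}_{w\ge0}$. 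This is essentially the construction from \cite{Bon10}/\cite{BoS19} of the $t$-structure adjacent to a smashing weight structure, so I would cite the relevant statement there rather than redo it in full.

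For the equivalence $\Ht\to\adfupr(\hw^{op},\ab)$, by part (\ref{Ht<Add}) the Yoneda-type functor $Y$ is exact, fully faithful, and lands in $\adfupr(\hw^{op},\ab)$; so it remains to prove essential surjectivity. The key is to use the projective generation statement of part (\ref{ProjHt}): since $\tr{C}$ is smashing, $\hw$ is idempotent complete by Lemma \ref{ll168}(2), so every projective object of $\Ht$ is of the form $H_t(P)$ with $P\in\tr{C}_{w=0}$, and by part (\ref{i25}) we have $Y(H_t(P))\cong \tr{C}(-,P)|_{\hw}$, the representable functor on $\hw$. So the image of $Y$ contains all representable functors $h_P=\hw(-,P)$, and more generally (using that $Y$ commutes with products on projectives, and products of representables) all products of representables. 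Given an arbitrary $F\in\adfupr(\hw^{op},\ab)$, I would take a presentation of $F$ by representables in the functor category — first a surjection $\coprod_i h_{P_i}\twoheadrightarrow F$, but since $F$ respects products one wants to replace the coproduct by a product; here is where one uses that $\adfupr(\hw^{op},\ab)$ consists of product-preserving functors and that $\hw$ has all small products (again by smashing-ness of $w$). So one gets an exact sequence $\prod_j h_{Q_j}\to \prod_i h_{P_i}\to F\to 0$ in $\adfupr(\hw^{op},\ab)$. The map $\prod_j h_{Q_j}\to \prod_i h_{P_i}$ corresponds, via $Y$ (which by (\ref{ProjHt}) and (\ref{i25}) identifies products of representables with $Y$ applied to products of objects $H_t(Q)$, and is full on these), to a morphism $\phi$ in $\Ht$ between the corresponding projectives; take $F':=\cok\phi$ in $\Ht$ and use exactness and full faithfulness of $Y$ to conclude $Y(F')\cong F$.

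The main obstacle I expect is the bookkeeping around coproducts versus products: producing a \emph{product}-indexed presentation of $F$ by representables (rather than the naive coproduct one) and making sure the comparison map between products of representables is genuinely in the image of $Y$ — i.e. that $Y$ restricted to the full subcategory of $\Ht$ on (products of) projectives of the form $H_t(P)$ is full, not merely faithful, onto the corresponding morphisms of product-functors. This should follow from part (\ref{i25}) (which gives $\cu(M,N)\cong\cu(M,H_t(N))$ and hence $\adfu(Y(H_tP), Y(H_tQ))\cong$ natural transformations $h_P\to h_Q\cong \hw(P,Q)\cong \cu(P,H_tQ)\cong \Ht(H_tP,H_tQ)$ by Yoneda) together with the observation that a $\adfupr$-functor out of $\hw^{op}$ is determined on products by its values on objects; but verifying that the product of these identifications is again such an identification (that $Y$ on $\Ht$ sends a small product of objects $H_t(P_i)$ to the product of the $h_{P_i}$) requires knowing that $\Ht$ has the relevant products and that $Y$ preserves them, which in turn comes back to $w$ being smashing and to Remark \ref{rsmash}(2). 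I would isolate this as a lemma: "on the full subcategory of $\Ht$ generated under small products by the objects $H_t(P)$, $P\in\cu_{w=0}$, the functor $Y$ is fully faithful and preserves products", and then the essential surjectivity argument above goes through cleanly.
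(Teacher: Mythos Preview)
The paper's own proof of part~(\ref{Ht=Add}) is a one-line citation of \cite[Theorem~3.2.3]{Bon24}, so you are attempting more than the paper does. Your outline for the existence of the adjacent $t$-structure is reasonable and, as you say, is essentially a citation; the equivalence argument, however, has a genuine gap.

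The problem is the products/coproducts bookkeeping. You assert that ``$\hw$ has all small products (again by smashing-ness of $w$)'', but smashingness only gives that $\cu_{w\ge 0}$ (hence $\cu_{w=0}$) is closed under \emph{coproducts}; in general $\hw$ does \emph{not} have products. Already for $\cu=D(\z\text{--Mod})$ and $w=w^{\pr}$ the heart is $\hw\simeq\operatorname{FrAb}$, and an infinite product $\prod_{\mathbb N}\z$ is not free. Consequently your passage ``replace the coproduct by a product'' and the presentation $\prod_j h_{Q_j}\to\prod_i h_{P_i}\to F\to 0$ do not make sense as stated: there is no canonical map $\prod_i h_{P_i}\to F$ built from the maps $h_{P_i}\to F$, and even if there were, $\prod_i h_{P_i}$ need not lie in the essential image of $Y$ (you would need $\prod_{\cu}P_i\in\cu_{t=0}$, i.e.\ $\cu_{w\ge 0}$ closed under products, which is not assumed).

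The fix is to stay with coproducts. In $\adfupr(\hw^{op},\ab)$ one has $\coprod_i h_{P_i}\cong h_{\coprod_i P_i}$: for any $F\in\adfupr$, $\mathrm{Hom}(h_{\coprod P_i},F)=F(\coprod P_i)=\prod F(P_i)=\prod\mathrm{Hom}(h_{P_i},F)$. Hence the usual ``surject from a coproduct of representables'' step yields a surjection from a \emph{single} representable $h_{Q_0}$ with $Q_0=\coprod_{(P,x)}P\in\cu_{w=0}$. Iterating gives an exact sequence $h_{Q_1}\to h_{Q_0}\to F\to 0$ in $\adfupr$; by Yoneda the first map comes from an $\hw$-morphism $Q_1\to Q_0$, hence (via part~(\ref{ProjHt})) from a morphism $H_t(Q_1)\to H_t(Q_0)$ in $\Ht$, and exactness plus full faithfulness of $Y$ (part~(\ref{Ht<Add})) finish the argument exactly as you intended.
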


\begin{proof}
 \ref{Ht<Add}. According to Theorem 4.4.2(4) of \cite{Bon10}, the Yoneda-type functor $\Ht\to\adfu(\hw^{op},\ab)$
is exact and fully faithful indeed. Hence it suffices to note that corepresentable functors respect products.

\ref{ProjHt}. Immediate from Proposition 5.2.1(1,2) of \cite{Bon24}.

\ref{i25}. 
 The bi-functor isomorphism in question  is given by the formula (25) that is 
 established in the proof of \cite[Theorem 4.4.2(3)]{Bon10}.

Restricting this isomorphism to $\hw\opp\times \hw$ one obtains the corresponding property of $Y\circ H_t^{\hw}$ immediately. 

\ref{Ht=Add}. This is (most of) Theorem 3.2.3 of~\cite{Bon24}.
\end{proof}

Now we pass to $t$-exact embeddings adjoint to weight-exact localizations.

\begin{theorem}\label{pi_&_G}
Assume that $\pi:\tr{C}\to\tr{E}$ is the localization of $\tr{C}$ by its triangulated subcategory~$\tr{D}$,
 and there exists a right adjoint  functor $\pi_*:\tr{E}\to\tr{C}$ to $\pi$.
\begin{enumerate}
\item\label{ImG} Then the functor $\pi_*$ is exact and fully faithful, and its essential image is 
 the class  $(\obj \tr{D})^{\perp}$.
\item\label{ImGw} Assume that  $\tr{C}$ and $\tr{E}$ are endowed with adjacent weight and $t$-structures 
 $(w,t)$ and $(w^{\tr{E}},t^{\tr{E}})$, respectively, and the functor  $\pi$ is weight-exact.
\begin{enumerate}
\item\label{G_t-exact} Then $\pi_*$ is $t$-exact and the essential image  $\pi_*(\Ht^{\tr{E}})$
 equals  $\cu_{t=0}\cap (\obj \tr{D})^{\perp}$.
\item\label{D=<U>} 
Assume that  $\tr{D}$ 
  equals the smallest strictly full triangulated subcategory of $\tr{C}$ that contains a class $\mathcal{U}\subset \obj \cu$ and is closed with respect to $\cu$-coproducts  (cf. Defintion \ref{generated}(\ref{localiz})). Then $\pi_*(\Ht^{\tr{E}})=\cu_{t=0}\cap(\cap_{i\in\Zbb}(\mathcal{U}[i]^{\perp}))$.
	
	\item\label{icomm}
	The diagram 
	\begin{equation}\label{ecomm}
	\begin{CD}
 \hw @>{H_t^{\hw}}>> \Ht\\
@VV{\pi^{\hw}}V@VV{\tr{H\pi}_*^*}V \\
\hw^{\tr{E}} @>{H_{t^{\tr{E}}}^{\hw^{\tr{E}}}}>>\Ht^{\tr{E}}
\end{CD}
\end{equation}
	is commutative up to an isomorphism; here $H_t^{\hw}$ and ${\pi^{\hw}}$ are the restrictions to $\hw$ (with the corresponding decreased targets) of the functors $H_t$ (see Proposition \ref{sic!t}(\ref{H^t})) and $\pi$, respectively, $H_{t^{\tr{E}}}^{\hw^{\tr{E}}}$ is the corresponding restriction of $H_{t^{\tr{E}}}$ to $\hw^{\tr{E}}$, and $\tr{H\pi}_*^*$ is the left adjoint to the restriction of $\pi_*$ to $\Ht^{\tr{E}}$ (see Proposition \ref{base_t-str}(\ref{pi_*(Ht)},\ref{tadj})).
	
	We will use the notation $Sq(\pi,w)$ for this diagram.

\item\label{u012} 
Assume  $\mathcal{U}=\mathcal{U}_0\cup \mathcal{U}_1\cup \mathcal{U}_2$, where 
  $\mathcal{U}_0$ consists of  cones of a class  $S_0$ of  $\hw$-morphisms,  the elements of
 $\mathcal{U}_1$ are  left $w$-degenerate, and the elements of $\mathcal{U}_2$ are right $w$-degenerate.

 i. Then the class $\pi_*(\Ht^{\tr{E}})$ equals 
 $\cu_{t=0}\cap \mathcal{U}_0^\perp\cap\mathcal{U}_0[-1]^\perp$.
 Moreover, it consists of those $N\in \tr{C}_{t=0}$ such that the morphism $H_N(s)=\tr{C}(-,N)(s)$ is bijective for all $s\in S_0$.

ii. If $\mathcal{U}_1=\mathcal{U}_2=\varnothing$ then
 $M\in\obj\tr{C}$ belongs to $\pi_*(\tr{E})$ if and only if $H_t(M[i])\in \pi_*(\Ht^{\tr{E}})$ 
   for all $i\in\Zbb$.
\end{enumerate}	
\end{enumerate}
\end{theorem}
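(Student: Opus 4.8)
The plan is to build up the five sub-statements in the order (\ref{G_t-exact}), (\ref{D=<U>}), (\ref{u012})i, (\ref{icomm}), (\ref{u012})ii, since each relies on the preceding ones. First I would prove (\ref{G_t-exact}): $t$-exactness of $\pi_*$ is where the real content sits. Since $\pi$ is weight-exact and $w,t$ (resp. $w^{\tr{E}},t^{\tr{E}}$) are adjacent, we have $\cu_{t\ge 0}=\cu_{w\ge 0}$ and $\eu_{t^{\tr{E}}\ge 0}=\eu_{w^{\tr{E}}\ge 0}$, so weight-exactness of $\pi$ already gives $\pi(\cu_{t\ge 0})\subset \eu_{t^{\tr{E}}\ge 0}$. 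For the adjoint $\pi_*$ I would instead use Proposition \ref{sic!t}(\ref{tperp}): $\cu_{t\le 0}=(\cu_{t\ge 0}[1])\perpp$ and likewise for $t^{\tr{E}}$; given $N\in \eu_{t^{\tr{E}}\le 0}$ and any $X\in\cu_{t\ge 0}[1]=\cu_{w\ge 0}[1]$, adjunction gives $\cu(X,\pi_*N)=\eu(\pi X,N)$, and $\pi X\in \eu_{w^{\tr{E}}\ge 0}[1]=\eu_{t^{\tr{E}}\ge 0}[1]$, so this vanishes; hence $\pi_*N\in\cu_{t\le 0}$. For the "$\ge 0$" half I would dualize, using that $\pi$ also preserves $\cu_{w\le 0}$ together with the co-orthogonality characterization of $\cu_{t\ge 0}$ coming from Proposition \ref{sic!t}(\ref{L_r}) (the embedding $\cu_{t\ge 0}\to\cu$ has right adjoint $L_t$), or more directly: $\pi_*$ being right adjoint to an exact functor is exact, and $t$-exactness in both directions then follows from the two $\perp$-computations plus non-emptiness of $t$-decompositions. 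The description $\pi_*(\Ht^{\tr{E}})=\cu_{t=0}\cap(\obj\tr{D})^\perp$ is then immediate: Theorem \ref{pi_&_G}(\ref{ImG}) identifies the essential image of $\pi_*$ with $(\obj\tr{D})^\perp$, $t$-exactness of $\pi_*$ forces $\pi_*(\Ht^{\tr{E}})\subset\cu_{t=0}$, and conversely Proposition \ref{base_t-str}(\ref{pi_*(M)}) (applicable since $\pi_*$ is fully faithful and $t$-exact) shows any $N\in\cu_{t=0}\cap(\obj\tr{D})^\perp$, being $\pi_*$ of some object, is $\pi_*$ of an object of $\Ht^{\tr{E}}$.

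Statement (\ref{D=<U>}) is a formal consequence of (\ref{G_t-exact}): when $\tr{D}=\langle\mathcal{U}\rangle$ as the smallest coproduct-closed strictly full triangulated subcategory containing $\mathcal{U}$, one has $(\obj\tr{D})^\perp=\bigcap_{i\in\Zbb}(\mathcal{U}[i])^\perp$ because the class of objects right-orthogonal to $N$ is closed under shifts, cones, retracts and coproducts (the last using that $\cu(-,N)$ sends coproducts to products), so it contains $\tr{D}$ as soon as it contains all $\mathcal{U}[i]$; the reverse inclusion is trivial. Then (\ref{u012})i specializes this: writing $\mathcal{U}=\mathcal{U}_0\cup\mathcal{U}_1\cup\mathcal{U}_2$, the degenerate pieces contribute nothing to the orthogonality constraint \emph{against objects of $\cu_{t=0}=\cu_{w=0}$}. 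Indeed if $U_1\in\mathcal{U}_1$ is left $w$-degenerate, i.e. $U_1\in\bigcap_i\cu_{w\ge i}$, then for $N\in\cu_{w=0}$ and each $i$ we have $N\in\cu_{w\le 0}\subset\cu_{w\le i-1}$ hence $N[?]$... more precisely $U_1[i]\in\cu_{w\ge i'}$ for all $i'$, and $\cu_{w\le 0}\perp\cu_{w\ge 1}$ gives $U_1[i]\perp N$ for every $i\in\Zbb$ automatically; symmetrically for right $w$-degenerate $U_2$ using $\cu_{w\ge 0}[1]\perpp$-type orthogonality. So only $\mathcal{U}_0$ survives, and $\pi_*(\Ht^{\tr{E}})=\cu_{t=0}\cap\mathcal{U}_0^\perp\cap(\mathcal{U}_0[-1])^\perp$. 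The reformulation via $S_0$ then comes from Proposition \ref{p01}(1): each $U\in\mathcal{U}_0$ is $\co(s)$ for an $\hw$-morphism $s$, the triangle $X\xrightarrow{s}Y\to\co(s)\to X[1]$ yields a long exact sequence upon applying $\cu(-,N)$, and $\co(s)^\perp$ together with $(\co(s)[-1])^\perp\!=\!\co(s[-1])^\perp$ (i.e. vanishing of $\cu(\co(s),N)$ and $\cu(\co(s)[-1],N)=\cu(\co(s),N[1])$) is equivalent to exactness forcing $\cu(s,N)=H_N(s)$ to be bijective.

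For (\ref{icomm}), the diagram $Sq(\pi,w)$, the plan is to chase the two composites applied to $P\in\hw=\cu_{w=0}$. Going right-then-down: $H_t^{\hw}(P)=H_t(P)\in\Ht$, and $\tr{H\pi}_*^*$ is by Proposition \ref{base_t-str}(\ref{tadj}) the restriction of $H_{t^{\tr{E}}}\circ\pi^*$ to $\Ht$, where $\pi^*$... actually here $\pi_*$ has left adjoint $\pi$ (it is a Verdier localization functor), so the left adjoint to $\pi_*|_{\Ht^{\tr{E}}}\colon\Ht^{\tr{E}}\to\Ht$ is $H_{t^{\tr{E}}}\circ\pi|_{\Ht}$; thus the right-then-down composite sends $P$ to $H_{t^{\tr{E}}}(\pi(H_t(P)))$. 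Going down-then-right: $\pi^{\hw}(P)=\pi(P)\in\hw^{\tr{E}}=\eu_{w^{\tr{E}}=0}$ (using weight-exactness, so $\pi(P)\in\eu_{w^{\tr{E}}=0}$ at least up to retract — one should note $\pi$ maps $\cu_{w=0}$ into $\eu_{w^{\tr{E}}=0}$), then $H_{t^{\tr{E}}}^{\hw^{\tr{E}}}(\pi(P))=H_{t^{\tr{E}}}(\pi(P))$. So the claim reduces to a natural isomorphism $H_{t^{\tr{E}}}(\pi(H_t(P)))\cong H_{t^{\tr{E}}}(\pi(P))$. I would get this from the weight decomposition / $t$-decomposition of $P$: since $P\in\cu_{w=0}\subset\cu_{t\ge 0}$, the $t$-decomposition triangle $L_tP\to P\to R_tP\to L_tP[1]$ has $R_tP\in\cu_{t\le 0}[-1]$, and in fact $H_t(P)$ fits into $R_tP[-1]\to\tau_{\ge 1}(L_t P)$... more cleanly: $P\in\cu_{t\ge 0}$ means the canonical map $H_t(P)\to P$ realized via $\tau_{\le 0}$... let me instead argue that the cone of $H_t(P)\to P$ (appropriately interpreted via the two-step truncation) lies in $\cu_{t\ge 1}=\cu_{w\ge 1}$, and $\pi$ weight-exact sends $\cu_{w\ge 1}$ to $\eu_{w^{\tr{E}}\ge 1}=\eu_{t^{\tr{E}}\ge 1}$, on which $H_{t^{\tr{E}}}$ vanishes; hence $H_{t^{\tr{E}}}\circ\pi$ sends $H_t(P)\to P$ to an isomorphism. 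Naturality in $P$ follows since all constructions are functorial. Finally (\ref{u012})ii: given $M$ with $H_t(M[i])\in\pi_*(\Ht^{\tr{E}})=(\obj\tr{D})^\perp\cap\cu_{t=0}$ for all $i$, I would show $M\in(\obj\tr{D})^\perp$ by a standard dévissage — every object of $\tr{D}$ (now $\tr{D}=\langle\mathcal{U}_0\rangle$ with $\mathcal{U}_0$ consisting of cones of $\hw$-morphisms, hence $\mathcal{U}_0\subset\cu_{[0,1]}$ by Proposition \ref{p01}, hence $\tr{D}$ is generated by bounded-weight objects) is built from shifts of $\mathcal{U}_0$ under cones, retracts and coproducts, and $\cu(D,M)$ can be computed through the $t$-truncation tower of $M$ using that $D\in\tr{D}$ is "bounded" enough that only finitely many (or a controlled colimit of) $H_t(M[i])$ enter — here the hypothesis $\mathcal{U}_1=\mathcal{U}_2=\varnothing$ is essential because it makes $\tr{D}$ "$t$-bounded-ish" relative to $w$. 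Concretely: $\mathcal{U}_0[i]\perp M$ for all $i$ iff $\co(s)[i]\perp M$ for all $\hw$-morphisms $s$ and all $i$, and the triangle for $\co(s)$ reduces this to bijectivity of $\cu(s,M[j])$ for all $j$, which in turn follows from bijectivity of $\cu(s,H_t(M[j]))$ for all $j$ — using that $s$ is a morphism in $\hw=\cu_{w=0}\subset\cu_{t\ge 0}$ so that $\cu(X,-)$ for $X\in\cu_{w=0}$ only sees $H_t$, i.e. $\cu(X,M[j])\cong\Ht(H_t(X),H_t(M[j]))$ by Proposition \ref{Ht?Add}(\ref{i25}) combined with adjointness of $H_t$.

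\textbf{Main obstacle.} The routine-looking parts (\ref{D=<U>}) and (\ref{u012})i are genuinely formal once (\ref{G_t-exact}) is in hand. The subtle point is (\ref{u012})ii: making the dévissage argument precise requires knowing that $\tr{D}=\langle\mathcal{U}_0\rangle$ is, in a suitable sense, generated by objects of bounded weight so that $\cu(D,M)$ is controlled by finitely many layers of the $t$-tower of $M$ (this is exactly why $\mathcal{U}_1=\mathcal{U}_2=\varnothing$ is assumed), and then transporting "$\cu(s,H_t(M[j]))$ bijective for all $j$" back to "$\mathcal{U}_0[i]\perp M$" cleanly — the identification $\cu(X,N)\cong\Ht(H_t(X),H_t(N))$ for $X\in\cu_{w=0}$ from Proposition \ref{Ht?Add}(\ref{i25}) is the technical linchpin there. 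The natural-transformation bookkeeping in (\ref{icomm}) — checking the isomorphism $H_{t^{\tr{E}}}\pi H_t(P)\cong H_{t^{\tr{E}}}\pi(P)$ is functorial and compatible with the two adjunction units — is also fiddly but not deep.
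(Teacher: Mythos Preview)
Your overall plan matches the paper's closely for parts (\ref{D=<U>}) and (\ref{u012})ii, and your direct argument for $t$-exactness in (\ref{G_t-exact}) is essentially the content of the reference the paper cites. Two points require correction or comment.

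\textbf{A genuine error in (\ref{u012})i.} You write ``$\cu_{t=0}=\cu_{w=0}$'' and then use the weight orthogonality $\cu_{w\le 0}\perp\cu_{w\ge 1}$ to conclude $U_1[i]\perp N$. Both steps are wrong. Adjacency gives $\cu_{w\ge 0}=\cu_{t\ge 0}$ only; the hearts $\cu_{w=0}$ and $\cu_{t=0}$ are in general different. More seriously, the weight orthogonality runs the wrong way: from $N\in\cu_{w\le 0}$ and $U_1[i]\in\cu_{w\ge 1}$ you get $N\perp U_1[i]$, not $U_1[i]\perp N$. The paper's fix is to use \emph{both} orthogonalities together with adjacency: for $N\in\cu_{t=0}$ and left $w$-degenerate $U_1$, one has $U_1[i]\in\cu_{w\ge 1}=\cu_{t\ge 1}$ and then the $t$-orthogonality $\cu_{t\ge 1}\perp\cu_{t\le 0}$ gives $U_1[i]\perp N$; for right $w$-degenerate $U_2$, one has $U_2[i]\in\cu_{w\le -1}$ and the weight orthogonality $\cu_{w\le -1}\perp\cu_{w\ge 0}=\cu_{t\ge 0}$ gives $U_2[i]\perp N$. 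The same pair of orthogonalities (displayed as (\ref{eort}) in the paper) also handles the vanishing of $\cu(U[i],N)$ for $i\neq 0,-1$ when $U=\co(s)$.

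\textbf{A different route for (\ref{icomm}).} Your elementwise argument (apply $H_{t^{\tr{E}}}\circ\pi$ to the triangle $\tau_{\ge 1}P\to P\to H_t(P)$; note the map goes $P\to H_t(P)$, not the other way) works and gives the isomorphism objectwise, but as you say the naturality bookkeeping is fiddly. The paper avoids this entirely: it enlarges the square to $\cu_{t\ge 0}\to\Ht$, $\eu_{t^{\tr{E}}\ge 0}\to\Ht^{\tr{E}}$ and observes that every arrow in this larger square is the left adjoint of an arrow in the square formed by the heart embeddings and the (restrictions of the) $t$-exact functor $\pi_*$. That latter square commutes on the nose, and left adjoints of a commuting square of right adjoints commute up to canonical isomorphism. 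This gives commutativity and naturality simultaneously, with no triangle chase.
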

\begin{proof}
\ref{ImG}.  These statements are well known and contained in 
\cite{Kra10}; see Corollary 2.4.2 and  Propositions 4.9.1 of ibid.  

 \ref{G_t-exact}. $\pi_*$ is $t$-exact according to  Proposition 4.4.5(3) of \cite{Bon10}.
Next, Proposition~\ref{base_t-str}(\ref{pi_*(M)}) implies that $\pi_*(\Ht^{\tr{E}})$ is the intersection of the essential image of $\pi_*$ with  $\cu_{t=0}$. Hence it remains to apply  assertion \ref{ImG}.

\ref{D=<U>}. Since for any $N\in \obj \tr{C}$ the class of those $M\in \obj \tr{C}$ such that 
 $M\perp \{N[i]:\ i\in \z\}$ is a triangulated subcategory of $ \tr{C}$ closed with respect to  $\tr{C}$-coproducts, it remains to apply the previous assertion.

\ref{icomm}. Since $\cu_{w=0}\subset \cu_{w\ge 0}=\cu_{t\ge 0}$ and $\pi$ is weight-exact, it suffices to verify that the corresponding "extension"
of $Sq(\pi,w)$ 
	$$\begin{CD}
 \cu_{t\ge 0} @>{H_t^{\cu_{t\ge 0}}}>> \Ht\\
@VV{\pi^{\cu_{t\ge 0}}}V@VV{\tr{H\pi}_*^*}V \\
\tr{E}_{t^{\tr{E}}\ge 0} @>{H_{t^{\tr{E}}}^{\tr{E}_{t^{\tr{E}}\ge 0}}}>>\Ht^{\tr{E}}
\end{CD}$$ 
is commutative. Now, recall (see Proposition \ref{sic!t}(\ref{H^t})) that the functors in this diagram are left adjoint to the functors in the square
$$\begin{CD} \Ht^{\tr{E}}@>{}>>\tr{E}_{t^{\tr{E}}\ge 0}\\
@VV{\pi_*^{\Ht^{\tr{E}}}}V@VV{\pi_*^{\tr{E}_{t^{\tr{E}}\ge 0}}}V \\
\Ht @>{}>> \cu_{t\ge 0} \end{CD}$$
and the latter square commutes (up to an isomorphism) since $\pi_*$ is $t$-exact (see 
 assertion 
  \ref{G_t-exact}).

  \ref{u012}(i). Since $w$ and $t$ are adjacent, the orthogonality axioms in Definitions \ref{dws} and ~\ref{dts}(\ref{itmain}) imply that
\begin{equation}\label{eort}
\tr{C}_{w\leq-1}\perp\tr{C}_{w\geq0}=\tr{C}_{t\geq0},\enskip \tr{C}_{w\geq1}=\tr{C}_{t\geq0}[1]\perp\tr{C}_{t\leq0}.
\end{equation}
Hence 
 $\mathcal{U}_1[i]\perp\tr{C}_{t=0}$ and
 $\mathcal{U}_2[i]\perp\tr{C}_{t=0}$ for all $i\in\Zbb$.
It remains to study $(\cup_{i\in \z} \mathcal{U}_0[i]) \perpp$.

Take $s:A\to B\in S_0$, where $A,B\in\tr{C}_{w=0}$, and $U=\mathrm{Cone(s)}\in\mathcal{U}_0$.
The 
 orthogonalities (\ref{eort}) imply that 
  $\tr{C}_{w=j}\perp  \tr{C}_{t=0}$ for any $j\neq 0$.
 Hence for any $N\in \cu_{t=0}$ we have an exact sequence 
$$
0\to\tr{C}(U[-1],N)\to\tr{C}(B,N)\xrar{H_N(s)}\tr{C}(A,N)\to\tr{C}(U,N)\to0.
$$
Hence $N\in U^\perp\cap U[-1]^\perp$ if and only if $H_N(s)$ is invertible.
Similarly we obtain $U[i]\perp \cu_{t=0}$ for any $i\neq -1,0$; this fact also follows from (\ref{eort}) combined with Proposition \ref{p01}(1). Thus $\{ U[i]:\ i\in \z \} \perpp\cap \tr{C}_{t=0}= U^\perp\cap U[-1]^\perp \cap \tr{C}_{t=0}$.
 
 Combining all these observations we immediately obtain the equalities for $\pi_*(\tr{E}_{t_{\tr{E}}=0})$  in question.
  
 \ref{u012}(ii). If $M\cong \pi_*(Y)$ for some $Y\in\tr{E}$, then the $t$-exactness of $\pi_*$ 
  yields (see Proposition \ref{sic!t}(\ref{L_r},\ref{H^t})) that  $H_t(M[i])\cong \pi_*(H_t(Y[i]))\in \pi_*(\tr{Ht}^{\tr{E}})$. 

To prove the inverse  implication for any 
$U=\co(s)\in\mathcal{U}_0$ as above  we consider the long exact sequence
$$
\ldots\to\tr{C}(U[-1],M)\to\tr{C}(A,M)\to\tr{C}(B,M)\to\tr{C}(U,M)\to\ldots.
$$
Applying Proposition \ref{Ht?Add}(\ref{i25}) we obtain that for any~$i\in\Zbb$ the homomorphism $\tr{C}(s,M[i])$ is isomorphic (in the category of $\ab$-arrows) to $\tr{C}(s,H_t(M[i]))$; hence it is bijective. 
Thus  $U[i]\perp M$ for all $i\in\Zbb$ and
 $U\in\mathcal{U}_0$. According to assertion 
  \ref{D=<U>}, the essential image class $\pi_*(\tr{E})$ equals 
 $\cap_{i\in\Zbb}(\mathcal{U}_0[i]^\perp)$; hence it contains $M$.
\end{proof}

\begin{note}\hspace{1cm}
\begin{enumerate}
\item In parts 
\ref{D=<U>} 
and ~\ref{u012} of our theorem we do not assume ~\tr{C} to be smashing.
\item  The proof will not really change if we will assume that  $\tr{D}$  equals the smallest strictly full \textbf{retraction-closed} triangulated subcategory of $\tr{C}$ that contains a class $\mathcal{U}\subset \obj \cu$ and is closed with respect to $\cu$-coproducts (yet cf. Lemma \ref{l168}).
\item One can certainly take 
$\mathcal{U}=\obj\tr{D}$. Moreover, for any  $U\in\mathcal{U}$ and $N\in\Ht$ one can compute $\tr{C}(U,N)$
 in terms of the \textit{weight complex} of $U$ (see  Theorem 4.4.2(9) of \cite{Bon10} and Theorem 2.1.2(1,2) of \cite{Bon21}). 
 
On the other hand, taking $\mathcal{U}=\mathcal{U}_0\cup \mathcal{U}_1\cup \mathcal{U}_2$ appears to be the main general way of constructing weight-exact localizations (cf. Remark 3.1.4(3) of  \cite{BoS19}).
\end{enumerate}
\end{note}
Most of notions mentioned in the following theorem 
 can be found in Definition \ref{generated}.
\begin{theorem}\label{main}
Assume that  $\tr{C}$ is  well generated 
 and $\tr{D}$ is its localizing subcategory  generated by a set $\mathcal{U}\subset\obj\tr{C}$.
\begin{enumerate}
\item\label{Brown_exists} Then both $\tr{C}$ and the Verdier localization ~\tr{E}=\tr{C}/\tr{D} are locally small 
and satisfy the Brown representability condition. 
 Moreover,  the localization functor  $\pi: \tr{C}\to\tr{E}$ respects coproducts and possesses an exact right adjoint  $\pi_*$.
\item\label{S_0} Assume that $w$ is a smashing weight structure on $\tr{C}$, and a {\bf set} $\mathcal{U}\subset\obj\tr{C}$ equals
  $\mathcal{U}_0\cup\mathcal{U}_1\cup\mathcal{U}_2$, where $\mathcal{U}_0$ consists of cones of a set $S_0$ of $\hw$-morphisms, the elements of $\mathcal{U}_1$ are left $w$-degenerate, and the  elements of   $\mathcal{U}_2$ are right $w$-degenerate. 
\begin{enumerate}
\item\label{www}  Then there exists a smashing weight structure ${w^{\tr{E}}}$ on  $\tr{E}$ such that $\pi$
 is weight-exact (that is, $w$ descends to $\tr{E}$). 

\item\label{t,t_E} There exist   $t$-structures $t$ and $t^{\tr{E}}$ on $\cu$ and  $\tr{E}$, respectively, that 
 are adjacent to  $w$ and $w^{\tr{E}}.$

\item\label{pi_*(Ht_E)}  $\pi_*$ restricts to an equivalence of $\Ht^{\tr{E}}$ with the full exact subcategory of 
  $\Ht$ that is characterized by any of the following conditions.
\begin{enumerate}
\item $N\in\obj\Ht: (\mathcal{U}_0\cup\mathcal{U}_0[-1])\perp N$;
\item $N\in\obj\Ht: H_N(s)=\tr{C}(s,N)$ is 
 bijective for all $s\in S_0$.
\end{enumerate}
\end{enumerate}
\end{enumerate}
\end{theorem}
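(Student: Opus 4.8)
The plan is to obtain part \ref{Brown_exists} from the theory of well generated categories, part \ref{www} from an explicit construction of $w^{\tr{E}}$, part \ref{t,t_E} from Proposition \ref{Ht?Add}(\ref{Ht=Add}), and part \ref{pi_*(Ht_E)} by feeding all of this into Theorem \ref{pi_&_G}. For part \ref{Brown_exists}: since $\tr{C}$ is well generated and $\tr{D}$ is its localizing subcategory generated by a \emph{set}, both $\tr{D}$ and the Verdier quotient $\tr{E}=\tr{C}/\tr{D}$ are well generated (see \cite{Nee01}; cf.\ \cite{krauwg}); in particular $\tr{C}$ and $\tr{E}$ are locally small and satisfy the Brown representability property. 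The functor $\pi$ respects coproducts since $\tr{D}$ is localizing; hence for every $Y\in\obj\tr{E}$ the cohomological functor $\tr{C}^{op}\to\ab$, $X\mapsto\tr{E}(\pi(X),Y)$, converts coproducts into products, so it is representable by Brown representability for $\tr{C}$, and its representing object is $\pi_*(Y)$. Being a right adjoint to an exact functor, $\pi_*$ is exact; by Theorem \ref{pi_&_G}(\ref{ImG}) it is moreover fully faithful with essential image $(\obj\tr{D})^{\perp}$.

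The heart of the matter is part \ref{www}. Following Remark \ref{rdescends} I would let $\tr{E}_{w^{\tr{E}}\le 0}$ (resp.\ $\tr{E}_{w^{\tr{E}}\ge 0}$) be the class of retracts in $\tr{E}$ of objects of $\pi(\tr{C}_{w\le 0})$ (resp.\ of $\pi(\tr{C}_{w\ge 0})$) and verify the axioms of Definition \ref{dws}. Closedness under retracts holds by fiat ($\tr{E}$ is idempotent complete, being smashing — see Lemma \ref{l168}); the shift axioms are immediate from exactness of $\pi$; the smashing property of $w^{\tr{E}}$ follows from the facts that $\pi$ respects coproducts and $w$ is smashing; and a weight decomposition of any $\pi(M)$ is the $\pi$-image of a weight decomposition of $M$. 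Thus everything reduces to the orthogonality $\pi(\tr{C}_{w\le 0})\perp\pi(\tr{C}_{w\ge 1})$. For $A\in\tr{C}_{w\le 0}$ and $B\in\tr{C}_{w\ge 1}$ adjunction gives $\tr{E}(\pi(A),\pi(B))\cong\tr{C}(A,\pi_*\pi(B))$; using the triangle $D_B\to B\to\pi_*\pi(B)\to D_B[1]$ with $D_B\in\obj\tr{D}$ (coming from the unit $B\to\pi_*\pi(B)$, whose cone is killed by $\pi$ because $\pi_*$ is fully faithful, so lies in the thick subcategory $\tr{D}=\ker\pi$) together with $\tr{C}(A,B)=0$, this reduces further to $\tr{C}(A,D_B[1])=\tr{C}(A[-1],D_B)=0$. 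To control $D_B$ I would exploit the precise shape of $\tr{D}$: by Proposition \ref{p01}(1) the elements of $\mathcal{U}_0$ are exactly those objects of $\tr{C}_{[0,1]}$ that are cones of $\hw$-morphisms, hence each admits a weight decomposition with terms in $\tr{C}_{w=0}$ and $\tr{C}_{w=1}$, while the elements of $\mathcal{U}_1$ (resp.\ $\mathcal{U}_2$) lie in $\bigcap_{i}\tr{C}_{w\ge i}$ (resp.\ $\bigcap_{i}\tr{C}_{w\le i}$); this "weight-boundedness" of the generators of $\tr{D}$ is exactly the input required by the descent criterion of \cite{BoS19} (cf.\ Proposition 3.1.1 there), so one either cites it directly or adapts the argument of \cite[Theorem 4.3.1.4(2)]{BoS18} from compactly generated to well generated categories. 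I expect this verification of orthogonality — equivalently, that the candidate classes really define a weight structure — to be the main obstacle.

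For parts \ref{t,t_E} and \ref{pi_*(Ht_E)}: since $\tr{C}$ satisfies Brown representability and $w$ is smashing, Proposition \ref{Ht?Add}(\ref{Ht=Add}) yields a $t$-structure $t$ on $\tr{C}$ adjacent to $w$; applied to the smashing weighted category $(\tr{E},w^{\tr{E}})$ (Brown representability from part \ref{Brown_exists}, smashing from part \ref{www}) it yields $t^{\tr{E}}$ adjacent to $w^{\tr{E}}$. Now all hypotheses of Theorem \ref{pi_&_G}(\ref{ImGw}) are in force: $\pi$ is weight-exact, and $\pi_*$ is $t$-exact and fully faithful. By part \ref{u012}(i) of that theorem the essential image $\pi_*(\Ht^{\tr{E}})$ equals $\tr{C}_{t=0}\cap\mathcal{U}_0^{\perp}\cap\mathcal{U}_0[-1]^{\perp}$, which coincides with $\{N\in\tr{C}_{t=0}:\tr{C}(s,N)\text{ is bijective for all }s\in S_0\}$; as $\tr{C}_{t=0}=\obj\Ht$, this is precisely the full subcategory of $\Ht$ described in the statement. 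It is an exact abelian subcategory because it is the essential image of $\pi_*|_{\Ht^{\tr{E}}}\colon\Ht^{\tr{E}}\to\Ht$, which is fully faithful and, by Proposition \ref{base_t-str}(\ref{pi_*(Ht)}), exact on the abelian category $\Ht^{\tr{E}}$; hence $\pi_*$ restricts to the asserted equivalence.
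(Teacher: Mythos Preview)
Your proposal is correct and follows essentially the same route as the paper: part~\ref{Brown_exists} via the standard well-generated/Brown-representability package, part~\ref{www} by deferring the descent of $w$ to the criterion in \cite{BoS19} (the precise reference the paper uses is Theorem 3.1.3(3(ii)) rather than Proposition 3.1.1, which only describes $w^{\tr{E}}$ once it exists), part~\ref{t,t_E} from Proposition~\ref{Ht?Add}(\ref{Ht=Add}), and part~\ref{pi_*(Ht_E)} from Theorem~\ref{pi_&_G}(\ref{u012}.i) together with Proposition~\ref{base_t-str}. Your extra unpacking of the orthogonality step is a fair sketch of why that citation is the crux, but it is not needed beyond the reference.
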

\begin{proof}
\ref{Brown_exists}. Immediate from Proposition  3.5.1 and   Theorems 5.1.1 and 7.2.1(2) of \cite{Kra10}.

\ref{www}. This is  Theorem 3.1.3(3(ii)) of ~\cite{BoS19}. 

\ref{t,t_E}. By Proposition \ref{Ht?Add}(\ref{Ht=Add}), the existence of  $t$ and $t^{\tr{E}}$   follows from the Brown respresentability properties for $\tr{C}$ and $\tr{E}$. 
 
\ref{pi_*(Ht_E)}. This is an obvious consequence of Proposition \ref{base_t-str}(\ref{pi_*(M)})
 combined with Theorem \ref{pi_&_G}(\ref{u012}.i).

\end{proof}

\begin{note}
So, we give a rather indirect description of $\hw^{\eu}$ in terms of $\hw$ and $S_0$. 

Though it is fine for our purposes, it is worth noting that the restricted functor $
\pi^{\hw}: \hw\to \hw^{\eu}$ can also be described as the composition of the additive localization functor $\hw\to {\hw[\mathcal{S}^{-1}],}$ with the idempotent completion one. Here one takes  $\mathcal{S}$ to be the closure of ${S_0\cup \{\id_M, M\in \cu_{w=0}\}}$ with respect to coproducts and applies Theorem 3.2.2(4) and Remark 3.2.3(1) of \cite{BoS19}. Moreover, one can describe morphisms in $\hw[\mathcal{S}^{-1}] $ via certain explicit formulae closely related to non-commutative localizations that we mentioned in \S\ref{shist}; see Remark 3.2.3(2) of ibid. and Proposition 
 3.1.7 of \cite{BoS18}. 
\end{note}

\subsection{Weight-exactness of localizations by semi-simple subcategories}\label{swelss}

\begin{proposition}\label{C'=C/D}
 Assume that $(\cu,w)$ is a weighted category and $\tr{D}\subset \cu$ is a semi-simple\footnote{That is, any morphism in $\tr{D}$ 
 can be presented $ id_{M_0} \bigoplus 0:M_1\to M_2$ for some $M_i\in \obj\tr{D}$. 
 Recall that a triangulated category is semi-simple if and only if it is abelian; cf.   Proposition II.1.2.9 of \cite{Ver96}.} 
triangulated subcategory such that the Verdier localization 
 ${\tr{E}=\tr{C}/\tr{D}}$ is locally small.
\begin{enumerate}
\item\label{desc} Then $w$ descends to $\tr{E}$  if and only if any $N\in \obj \du$ can be presented as $N_-\bigoplus N_+$, where $N_-\in \cu_{w\le 0}$ and $N_+\in \cu_{w\ge 0}$. 
\item\label{primes} Assume that $w$ is smashing and any object of $\tr{D}$ is isomorphic to a coproduct of indecomposable objects. 
 Then the equivalent assumptions of the previous assertion are fulfilled if and only if any indecomposable object of $\tr{D}$ 
 is either left or right $w$-degenerate or belongs to  $\cu_{[s,s+1]}$ for some  $s\in\Zbb$.
\end{enumerate}
\end{proposition}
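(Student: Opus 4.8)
The plan is to establish assertion \ref{desc} and then reduce assertion \ref{primes} to it. For assertion \ref{desc}, recall from Remark \ref{rdescends} that \emph{if} $w$ descends then $\eu_{w^{\eu}\le 0}$ (resp. $\eu_{w^{\eu}\ge 0}$) is forced to be the class of retracts in $\eu$ of objects of $\pi(\cu_{w\le 0})$ (resp. $\pi(\cu_{w\ge 0})$). So I would take these two classes as given and, for the "if" direction, verify the axioms of Definition \ref{dws}. Retraction-closedness and the shift inclusions are immediate, and weight decompositions in $\eu$ are obtained by applying the essentially surjective functor $\pi$ to weight decompositions in $\cu$; the only real content is the orthogonality $\pi(\cu_{w\le 0})\perp\pi(\cu_{w\ge 0})[1]$, i.e. (after a shift) $\pi(\cu_{w\le -1})\perp\pi(\cu_{w\ge 0})$.

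To prove this I would use the calculus of fractions for the Verdier quotient: for $M\in\cu_{w\le -1}$, $N\in\cu_{w\ge 0}$ one has $\eu(\pi M,\pi N)=\varinjlim\cu(M,Y)$, the colimit running over morphisms $t\colon N\to Y$ with $\co(t)=D\in\obj\du$. Fixing such a $t$ and completing it to a triangle $N\xrightarrow{t}Y\to D\xrightarrow{v}N[1]$, the hypothesis gives $D=D_-\oplus D_+$ with $D_-\in\cu_{w\le 0}$, $D_+\in\cu_{w\ge 0}$ (and $D_\pm\in\obj\du$, since $\du$ is idempotent complete by the footnote and Lemma \ref{l168}). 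Because $\cu_{w\le 0}\perp\cu_{w\ge 1}$, the composite $D_-\hookrightarrow D\xrightarrow{v}N[1]$ vanishes, so the triangle splits off $D_-$: there is an isomorphism $Y\cong D_-\oplus Y_1$ compatible with $t$, where $Y_1$ sits in a triangle $N\to Y_1\to D_+\to N[1]$, and the resulting projection $Y\to Y_1$ (with cone $D_-[1]\in\obj\du$) is a morphism of the colimit system. Now $Y_1$ is an extension of $D_+\in\cu_{w\ge 0}$ by $N\in\cu_{w\ge 0}$, hence lies in $\cu_{w\ge 0}$, so $\cu(M,Y_1)=0$ as $M\in\cu_{w\le -1}$. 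Thus every element of $\cu(M,Y)$ dies under a transition morphism and the colimit vanishes, which proves the orthogonality, hence that $w$ descends.

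For the converse of assertion \ref{desc}, take $D\in\obj\du$ and a weight decomposition $LD\to D\to RD\to LD[1]$ in $\cu$; applying the weight-exact $\pi$ and using $\pi(D)=0$, the triangle $\pi(LD)\to 0\to\pi(RD)\to\pi(LD)[1]$ is a weight decomposition of $0$ in $\eu$, so by uniqueness of weight decompositions up to isomorphism $\pi(LD)\cong 0\cong\pi(RD)$, and thickness of $\du$ gives $LD,RD\in\obj\du$. Then the connecting morphism $RD[-1]\to LD$ is a morphism of $\du$, hence isomorphic to $\id_{N_0}\oplus 0\colon N_0\oplus N_1\to N_0\oplus N_2$ by semi-simplicity; taking cones, $D\cong N_2\oplus N_1[1]$, which is of the required form since $N_2$ is a retract of $LD\in\cu_{w\le 0}$ and $N_1[1]$ a retract of $RD\in\cu_{w\ge 1}\subset\cu_{w\ge 0}$. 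For assertion \ref{primes}, note first that for an integer $s$ the class $\cu_{[s,s+1]}$ lies in $\cu_{w\ge 0}$ if $s\ge 0$ and in $\cu_{w\le 0}$ if $s\le -1$, so any object of $\du$ that is left or right $w$-degenerate or lies in some $\cu_{[s,s+1]}$ belongs to $\cu_{w\le 0}\cup\cu_{w\ge 0}$. Writing $D\in\obj\du$ as $\coprod_j D_j$ with $D_j$ indecomposable and grouping the $D_j$ according to which of $\cu_{w\le 0}$, $\cu_{w\ge 0}$ they lie in, the "if" implication follows once both classes are known to be closed under coproducts — for $\cu_{w\ge 0}$ this is the smashing hypothesis, and for $\cu_{w\le 0}$ it follows from the identity $\cu_{w\le 0}=\perpp(\cu_{w\ge 1})$ together with the existence of coproducts. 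Conversely, let $D_0$ be a nonzero indecomposable object of $\du$; for each $n\in\Zbb$ the decomposition of the indecomposable $D_0[n]\in\obj\du$ forces one of its two summands to vanish, i.e. for every $m\in\Zbb$ we have $D_0\in\cu_{w\le m}$ or $D_0\in\cu_{w\ge m}$. Then $A=\{m:D_0\in\cu_{w\le m}\}$ is an up-set, $B=\{m:D_0\in\cu_{w\ge m}\}$ a down-set, and $A\cup B=\Zbb$; the cases $A=\Zbb$ and $B=\Zbb$ give right and left $w$-degeneracy, while otherwise $A=[a,\infty)\cap\Zbb$, $B=(-\infty,b]\cap\Zbb$ with $b\ge a-1$, whence $D_0\in\cu_{[a-1,a]}$ if $b=a-1$ and $D_0\in\cu_{w=a}\subset\cu_{[a,a+1]}$ if $b\ge a$.

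The step I expect to be the main obstacle is the orthogonality in assertion \ref{desc}: one must check carefully that splitting the cone $D$ as $D_-\oplus D_+$ is compatible with the morphisms of the colimit system computing $\eu(\pi M,\pi N)$, so that the projection $Y\to Y_1$ really is an admissible transition morphism; once this bookkeeping is done, extension-closedness of $\cu_{w\ge 0}$ finishes the argument at once. A secondary subtlety is the implication $\pi(LD)\cong 0\Rightarrow LD\in\obj\du$ in the converse, which rests on the uniqueness of weight decompositions up to isomorphism and on the thickness of the semi-simple subcategory $\du$.
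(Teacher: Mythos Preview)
Your argument for the converse in assertion \ref{desc} contains a genuine error: weight decompositions are \emph{not} unique up to isomorphism. For any $P\in\cu_{w=0}$ the triangle $P\to 0\to P[1]\to P[1]$ is a perfectly good weight decomposition of $0$, so from a weight decomposition $LD\to D\to RD$ of $D\in\obj\du$ you cannot conclude $\pi(LD)\cong 0$. Concretely, take $\cu=\cu_1\times\cu_2$ with the product weight structure and $\du=\cu_1\times 0$ (semi-simple if $\cu_1$ is); then $w$ descends to $\eu\cong\cu_2$, but for $D=(0,0)\in\obj\du$ the weight decomposition $(0,P)\to(0,0)\to(0,P[1])$ with $P\in(\cu_2)_{w=0}$ nonzero has $\pi(LD)=P\neq 0$. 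Consequently you never get $LD,RD\in\obj\du$, and the semi-simplicity of $\du$ cannot be applied to the connecting morphism.

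The paper sidesteps this by invoking Theorem 3.1.3(1) of \cite{BoS19}, which says that $w$ descends if and only if every $N\in\obj\du$ admits a distinguished triangle $LN\xrightarrow{i}N\xrightarrow{j}RN\to LN[1]$ \emph{in $\du$} (so $LN,RN\in\obj\du$ from the start) with $i$ factoring through $\cu_{w\le 0}$ and $j$ through $\cu_{w\ge 0}$. Semi-simplicity then lets one write $i$ in the form $\begin{pmatrix}0&\id_{N_-}\\0&0\end{pmatrix}$ and read off that $\id_{N_-}$ itself factors through $\cu_{w\le 0}$, forcing $N_-\in\cu_{w\le 0}$; similarly for $N_+$. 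For the ``if'' direction the paper uses the same criterion (taking $i$ and $j$ to be the obvious split inclusion and projection), which is much shorter than your direct verification via calculus of fractions---though your argument there is essentially sound. One small point: your appeal to Lemma~\ref{l168} for the idempotent completeness of $\du$ is misplaced, since $\du$ need not have countable coproducts; rather, semi-simple triangulated categories are abelian and hence idempotent complete. Your treatment of assertion \ref{primes} is correct and matches the paper's.
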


\begin{proof}
\ref{desc}. According to Theorem 3.1.3(1) of \cite{BoS19}, $w$ descends to $\tr{E}$ if and only if
	  for any $N\in \obj \tr{D}$ there exists a $\tr{D}$-distinguished triangle
 \begin{equation}\label{LN->N->RN}
LN\xrar{i}N\xrar{j}RN\to LN[1],
\end{equation}
such that $i$ factors through (an element of) $\cu_{w\le 0}$ and $j$ factors through $\cu_{w\ge 0}$. If $N= N_-\bigoplus N_+$, where $N_-\in \cu_{w\le 0}$ and $N_+\in \cu_{w\ge 0}$ then we can take $i$ (resp. $j$) 
 to be the obvious embedding $N_-\to N_-\bigoplus N_+$ (resp. the projection $N_-\bigoplus N_+\to N_+$).

Now let us verify the converse implication. Assume that we have a distinguished triangle (\ref{LN->N->RN}). Using semi-simplicity, present $i$ as 
$
 \begin{pmatrix}
 0 & id_{N_{-}}\\
 0 & 0
 \end{pmatrix}: N'\oplus N_{-}\to N_{-}\oplus N_{+}$ for $N_-,N_+,N'\in \obj \tr{D}$; then $j$ can be presented as 
$
 \begin{pmatrix}
 0 & id_{N_{+}}\\
 0 & 0
 \end{pmatrix}: N_{-}\oplus N_{+}\to N_{+}\oplus N'[1]$. By our assumptions, $i$ factors through $\cu_{w\le 0}$, $\id_{N_-}$  factors through $\cu_{w\le 0}$ as well; hence $N_-\in \cu_{w\le 0}$. Similarly we obtain $N_+\in \cu_{w\ge 0}$.

\medskip
\ref{primes}. Assume that the assumptions of assertion~\ref{desc} are fulfilled an  object $N$ of $\tr{D}$ is indecomposable; then $N[-i]$ is indecomposable 
 for any $i\in\Zbb$. Consequently, for any $i$ we either have
  ${N[-i]=N[-i]_{-}\in\tr{C}_{w\leq0}}$ or $N[-i]=N[-i]_{+}\in\tr{C}_{w\geq0}$.
 If the first equality is fulfilled for all $i\in\Zbb$ then  $N\in\cap_{i\in\Zbb}\tr{C}_{w\leq i}$, that is, $N$
 right $w$-degenerate. If $N[-i]=N[-i]_{+}$ for all $i\in\Zbb$ then $N$ is  left $w$-degenerate. Lastly, if neither of the equalities is fulfilled for all $i\in\z$ then there exists
  $s\in\z$ such that $N\in \tr{C}_{w\leq s+1}$ и $N\notin \tr{C}_{w\leq s}$. Thus $N\in\tr{C}_{[s,s+1]}$.

Conversely, assume that  $N=\coprod N_j$, where $N_j$ are indecomposable objects of $\tr{D}$ that are either left or right $w$-degenerate or belong to  $\cu_{[s,s+1]}$ for some $s\in \z$. Then we can take  $N_{+}$ to be the coproduct of those $N_j$ that belong to  $\tr{C}_{w\geq0}$, and  $N_{-}$ to be the coproduct of the remaining ones. 

Alternatively, one can apply Theorem 3.1.3(3) of \cite{BoS19} here directly.
\end{proof}

\section{On weight structures with projective hearts}\label{Loc_D(R)}

In this section we mostly study weight structures with the heart equivalent to $\projr$ (for a ring $R$).

We start  \S\ref{w^PD(A)} with the simple Proposition \ref{hprojr} on the weight structure corresponding to a single compact generator that 
 satisfies the connectivity condition (see Definition \ref{dwso}(\ref{iconn})). 
 We also study the so-called projective weight structure $w^{\pr}$ on the derived category $D(\rmod)$. Below we will often mention the relation of $w^{\pr}$ to the projective dimension of complexes; see Proposition \ref{w^PP}(\ref{pd=wd}).

In \S\ref{Bazz} we 
 apply  the results of section \ref{sweadjt} to the case $w=w^{\pr}$ and $\uu=\uu_0$ consisting either of (i) $U$, where $u:R\to U$ is a  ring epimorphism satisfying certain assumptions, or of  (ii) $\{R[s^{-1}]\}$, where $s$ runs through elements of an ideal $I$ or $R$ and $R$ is assumed to be commutative. In these cases the corresponding essential images $\Ht^{\eu}\to \Ht$ were called 
 $u$-contramodules and $I$-contrmodules in several papers of L. Positselsky and his co-authors; cf. also Remark \ref{rpos}(\ref{idwg}).

In \S\ref{scomphearts} we prove that "the heart information" on a weight-exact localization 
 constructed using Theorem \ref{main}(\ref{S_0}) 
 is canonically determined both by $(\hw,S_0)$  (see Theorem \ref{tp01ext}(2)) and by $(\Ht,\pi_*(\Ht^{\eu}))$ (see Remark \ref{rdetht}). Next (in Corollary \ref{contra}) we apply this observation to well generated weighted categories with $\hw\cong \projr$.

\subsection{The projective weight structure on $D(\rmod)$}\label{w^PD(A)}

We start from a rather general (and yet simple) statement on the existence of weight structures. Starting from this moment, $R$ will always denote an associative unital ring, 
 all unspecified modules are left modules,  $\rmod$ is the category of (left) $R$-modules, and  $\projr$ is 
 its subcategory of projective 
  $R$-modules.

\begin{proposition}\label{hprojr}
 Assume that $\tr{C}$ is compactly generated by a connective object class $\PP=\{P\}$ (for some $P\in \obj \cu$) 
  (see Definition \ref{dwso}(\ref{iconn})); set $R=\ndo(\PP)^{op}$ 
	 (the ring opposite to the endomorphism ring of $P$). 

1. Then $\tr{C}$ is well generated, the set $\PP$  class-generates a smashing weight structure $w$ with $\hw\cong \projr$, and the elements of $\cu_{w=0}$ are the direct summands of all coproduct powers of $P$. 

2. 
There exists a $t$-structure $t$ adjacent to $w$ and $\Ht\cong \rmod$. Moreover, $\cu_{t\ge 0}=\cu_{w\ge 0}=\{P[i]:\ i<0\}\perpp$ and $\cu_{t\le 0}=
 \{P[i]:\ i>0\}\perpp$.
\end{proposition}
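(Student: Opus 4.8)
The plan is to obtain the weight structure from the general theory of weight structures class-generated by connective sets of compact objects, to read off its heart by hand, and then to feed $w$ into the results on adjacent $t$-structures recorded in Proposition \ref{Ht?Add}.

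\emph{Part 1.} First, $\tr{C}$ is well generated simply because it is compactly generated, i.e.\ $\aleph_0$-well generated. Since $\PP=\{P\}$ is a one-element connective set of compact objects of the smashing category $\tr{C}$ and $\tr{C}=\lPr_{\tr{C}}$, the general existence theorem for weight structures generated by connective sets of compact objects (cf.\ \cite{Bon21} and \cite{BoS19}) produces a weight structure $w$ class-generated by $\PP$; it is smashing because $\tr{C}$ is smashing and $\tr{C}_{w\ge0}=[\bigcup_{i\ge0}\PP[i]]^{cl}$ is smashing by construction. The same circle of ideas identifies $\tr{C}_{w=0}$ with $[\PP]^{cl}$; and since $\PP$ is connective, every extension of a coproduct of copies of $P$ by another such coproduct splits (its connecting morphism lies in $\tr{C}(P^{(I)},P^{(J)}[1])=\prod_I\tr{C}(P,P^{(J)}[1])=0$), so $\tr{C}_{w=0}$ is precisely the class of retracts of the coproduct powers $P^{(I)}$, $I$ a set.

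Next I would identify $\hw$ with $\projr$. Put $A=\ndo_{\tr{C}}(P)$, so $R=A^{op}$, and consider $\tr{C}(P,-)\colon\tr{C}\to\rmod$, where $\tr{C}(P,X)$ is viewed as a left $R$-module via the right $\End(P)$-action by precomposition. As $P$ is compact, $\tr{C}(P,-)$ preserves coproducts, so $\tr{C}(P,P^{(I)})\cong R^{(I)}$ is free; moreover $\tr{C}(P^{(I)},P^{(J)})=\prod_I\tr{C}(P,P^{(J)})=\prod_I R^{(J)}=\Hom_R(R^{(I)},R^{(J)})$ for all sets $I,J$, the natural map being the evident isomorphism under these identifications. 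Hence $\tr{C}(P,-)$ restricts to an equivalence between the full subcategory of coproduct powers of $P$ in $\tr{C}$ and the full subcategory of free modules in $\rmod$; passing to idempotent completions --- $\tr{C}$, hence $\hw=\tr{C}_{w=0}$, is idempotent complete by Lemmas \ref{l168} and \ref{ll168}, and $\projr$ is the retract-closure of the free modules --- gives the equivalence $\hw\simeq\projr$, finishing Part 1.

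\emph{Part 2 and the main obstacle.} Being compactly generated, $\tr{C}$ satisfies Brown representability (cf.\ Theorem \ref{main}(\ref{Brown_exists})); since $w$ is smashing, Proposition \ref{Ht?Add}(\ref{Ht=Add}) supplies a $t$-structure $t$ adjacent to $w$ together with an equivalence $\Ht\simeq\adfupr(\hw^{op},\ab)\simeq\adfupr(\projr^{op},\ab)$, and the last category is equivalent to $\rmod$ via $M\mapsto\Hom_R(-,M)$ (this functor is product-preserving and fully faithful, and it is essentially surjective because every projective is a retract of some $R^{(I)}$, on which the functor is pinned down by $\Hom_R(R,-)\cong\id$); one then checks that the resulting equivalence $\Ht\simeq\rmod$ may be taken to be induced by $\tr{C}(P,-)$. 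For the orthogonality descriptions, the construction of $w$ also yields $\tr{C}_{w\ge0}=(\bigcup_{i<0}\PP[i])\perpp=\{P[i]:i<0\}\perpp$ (the inclusion ``$\subseteq$'' is immediate from $\tr{C}_{w\le-1}\perp\tr{C}_{w\ge0}$ and $\bigcup_{i<0}\PP[i]\subseteq\tr{C}_{w\le-1}$), whence $\tr{C}_{t\ge0}=\tr{C}_{w\ge0}=\{P[i]:i<0\}\perpp$ by adjacency; Proposition \ref{sic!t}(\ref{tperp}) then gives $\tr{C}_{t\le0}=(\tr{C}_{t\ge0}[1])\perpp=(\tr{C}_{w\ge1})\perpp$, and since $\tr{C}_{w\ge1}=[\{P[i]:i>0\}]^{cl}$ while, for any fixed object, the class of objects left-orthogonal to it is smashing and extension-closed, $(\tr{C}_{w\ge1})\perpp=\{P[i]:i>0\}\perpp$. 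The main difficulty here is conceptual rather than computational: I expect the delicate point to be invoking the correct packaging of the existence-and-heart theorem for the weight structure class-generated by a connective compact object --- in particular the description of $\tr{C}_{w=0}$ and the identity $\tr{C}_{w\ge0}=\{P[i]:i<0\}\perpp$, at least one of which is genuinely non-formal input and relies on the underlying ${}^{\perp}$/homotopy-colimit construction rather than on the weight axioms alone --- while keeping all the left/right-module and ${}^{op}$ conventions consistent throughout the identifications $\hw\simeq\projr$ and $\Ht\simeq\rmod$.
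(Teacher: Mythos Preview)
Your proof is correct and follows the same overall strategy as the paper: invoke the existence theorem for weight structures class-generated by connective compact sets from \cite{Bon21}, then feed the result into the adjacent-$t$-structure machinery. The orthogonality descriptions of $\cu_{t\ge0}$ and $\cu_{t\le0}$ are obtained in essentially the same way in both.

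There is, however, a genuine difference in how the two hearts are identified. The paper reverses your order: it first cites \cite[Theorem 3.2.3(6)]{Bon21} to conclude that $\hw$ is equivalent to the subcategory of projective objects of $\Ht$, and then computes $\Ht\cong\rmod$ directly from that same theorem (which identifies $\Ht$ with additive contravariant functors from the one-object category $\{P\}$ into $\ab$); the equivalence $\hw\cong\projr$ then falls out. You instead compute $\hw\cong\projr$ by hand via the functor $\cu(P,-)$ and the explicit isomorphism $\cu(P^{(I)},P^{(J)})\cong\Hom_R(R^{(I)},R^{(J)})$, and only afterwards obtain $\Ht$ from Proposition \ref{Ht?Add}(\ref{Ht=Add}) as $\adfupr(\projr^{op},\ab)\cong\rmod$. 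Your route is more self-contained on the $\hw$ side and makes the $R^{op}$ convention transparent, at the cost of an extra verification that $\adfupr(\projr^{op},\ab)\cong\rmod$; the paper's route is shorter but leans more heavily on the cited packaged result. One small notational slip: you write that ``the same circle of ideas identifies $\cu_{w=0}$ with $[\PP]^{cl}$'', but in the paper's conventions $[\PP]^{cl}$ is not retract-closed, so what the external result actually gives (and what you then correctly use) is that $\cu_{w=0}$ is the retract-closure of $[\PP]^{cl}$.
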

\begin{proof}
1. Clearly, $\cu$ is well generated; see Remark \ref{rwell}(1). 
 Then the existence of $w$ and the description of  $\cu_{w=0}$ is given by Theorem 3.2.3(3) of \cite{Bon21} and part 6 of loc. cit. says that $\hw$ is equivalent to the subcategory of projective objects in $\Ht$ (cf. Proposition 4.3.3(2) of ibid.). Thus it remains to compute    $\Ht$, and we will do this in the proof of assertion 2.

2. $t$ that is adjacent to $w$ exists according to Theorem \ref{main}(
\ref{t,t_E}) (one can take $\tr{D}=\ns$ in this theorem).
Next, $\Ht$ is equivalent to the category of those contravariant functors  from the one-object category $\{P\}$ into $\ab$ that respect the addition of morphisms; see Theorem 3.2.3(6) of \cite{Bon21}. The latter category is obviously equivalent to $\rmod$. 

Moreover, Proposition \ref{sic!t}(\ref{tperp}) implies  $\cu_{t\le 0}=(\cu_{t\ge 0}[1])\perpp=\cu_{w\ge 1}\perpp$, and our description of $w$ easily implies that this class equals  $\{P[i]:\ i>0\}\perpp$. Lastly, $\cu_{t\ge 0}=\cu_{w\ge 0}$ since $t$ is adjacent to $w$; hence it remains to apply Theorem 3.2.3(6) of \cite{Bon21}. 
\end{proof}

Now we pass to the case $\tr{C}=D(\rmod)$.

\begin{proposition}\label{w^PP}\hspace{1cm}
\begin{enumerate}
\item\label{drgen} The category $\tr{C}=D(\rmod)$ is locally small and compactly generated by the set $\PP=\{P\}.$

\item\label{class-wPP}  $\tr{C}$ is endowed with a weight structure $w^\pr$  class-generated by $\PP$. 
Moreover, $w^{\pr}$ is non-degenerate and adjacent to the $t$-structure $t^{can}$ (see Remark \ref{t^can_w^st}(\ref{w^st})). Furthermore, the category  $\hw^\pr$ lies in $ \Ht^{can}$ and consists of complexes whose (only) zeroth cohomology is projective. 

\item\label{pd=wd} Let $i,j\in \z$. 
Then  $M\in 
\tr{C}_{w^\pr\geq i}$ belongs to 
 $
\tr{C}_{w^\pr\leq j}$ if and only if ${\mathrm{pd}_{R} M\leq j}$, that is, there exists a bounded $\projr$-complex $N=N^\bullet$ whose terms $N^s$ vanish for $s<-j$ and $N\cong M$ (in $D(\rmod)$).
\end{enumerate}
\end{proposition}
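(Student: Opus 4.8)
The plan is to combine the description of $w^{\pr}$ as the weight structure class-generated by $\PP=\{P\}$ (where $P=R$ viewed as a complex concentrated in degree $0$) with Proposition \ref{hprojr}, which identifies $\cu_{w^{\pr}=0}$ with the direct summands of coproduct powers of $P$, i.e. with (complexes isomorphic to) projective $R$-modules placed in degree $0$. First I would recall the standard ``stupid truncation'' picture: for a bounded-above complex of projectives, the brutal truncations give weight decompositions, so a bounded $\projr$-complex $N^\bullet$ concentrated in degrees $[-j,-i]$ (i.e. $N^s=0$ for $s<-j$ and $s>-i$) lies in $\cu_{w^{\pr}\ge i}\cap\cu_{w^{\pr}\le j}$; this is exactly the ``stupid filtration'' argument relating $w^{\pr}$ to $w^{st}$ on $K^-(\projr)$, together with the fact that a bounded-above acyclic complex of projectives is contractible, so $K^-(\projr)\hookrightarrow D(\rmod)$ is fully faithful and weight-exact for $w^{st}\mapsto w^{\pr}$. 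Hence the ``if'' direction is immediate once one produces such a representative $N^\bullet$.

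For the ``only if'' direction, suppose $M\in\cu_{w^{\pr}\ge i}\cap\cu_{w^{\pr}\le j}$. The key input is that every object has a weight Postnikov tower whose factors lie in $\cu_{w^{\pr}=s}[s]$, i.e. are shifted projectives; more concretely, I would build a bounded $\projr$-complex quasi-isomorphic to $M$ inductively using weight decompositions. Start from a weight decomposition $w_{\le j}M\to M\to w_{\ge j+1}M$: since $M\in\cu_{w^{\pr}\le j}$ one has $w_{\ge j+1}M=0$, so $M\in\cu_{w^{\pr}\le j}$ is represented by truncating a projective resolution. Concretely, take any surjection from a projective $P^{-i}\twoheadrightarrow H^{-i}$-type map and proceed downward; the boundedness below ($M\in\cu_{w^{\pr}\ge i}$) together with orthogonality $\cu_{w^{\pr}\le -1}\perp\cu_{w^{\pr}\ge 0}$ forces the process to terminate at degree $-j$, yielding $N^s=0$ for $s<-j$. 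The cleanest phrasing: by Proposition \ref{hprojr} and the theory of weight complexes, $M$ is quasi-isomorphic to its weight complex $t(M)$, which is a complex of objects of $\hw^{\pr}\cong\projr$; the bounds $i\le w^{\pr}\le j$ translate into $t(M)$ being (homotopy equivalent to) a complex concentrated in degrees $[-j,-i]$, and a complex of projectives concentrated in degrees $\ge -j$ is precisely one witnessing $\mathrm{pd}_R M\le j$. This also gives the equivalence with the usual homological definition of projective dimension, since for $M\in\Ht^{can}$ (a module) the weight complex is a projective resolution.

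The main obstacle I expect is making the passage ``$M\in\cu_{w^{\pr}\le j}\cap\cu_{w^{\pr}\ge i}$ implies $M$ has a $\projr$-representative concentrated in $[-j,-i]$'' fully rigorous, since it requires more than the existence of individual weight decompositions: one needs that the weight Postnikov tower can be assembled into an honest bounded complex of projectives representing $M$ in $D(\rmod)$, and that the cohomological vanishing conditions match the weight-range conditions. I would handle this either by citing the compatibility of $w^{\pr}$ with $w^{st}$ on $K^-(\projr)$ (so that weight-bounded objects of $D(\rmod)$ that are representable by bounded-above projective complexes are representable by bounded ones) plus the elementary fact that a bounded-above complex of projectives with cohomology vanishing below degree $-j$ is quasi-isomorphic to its stupid truncation $\sigma_{\ge -j}$, which is then a bounded projective complex; or, if a cleaner reference is available for weight complexes of objects of bounded weights being homotopy equivalent to bounded complexes over the heart, I would invoke that directly. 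Everything else — the ``if'' direction, non-degeneracy of $w^{\pr}$, and adjacency to $t^{can}$ — is routine given parts (\ref{drgen}), (\ref{class-wPP}) and Proposition \ref{hprojr}.
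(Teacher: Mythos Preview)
Your ``if'' direction via stupid filtrations matches the paper. For the ``only if'' direction you correctly identify the obstacle (passing from $M\in\cu_{[i,j]}$ to a bounded $\projr$-representative), and your instinct that an external input is needed is right; but your proposed fallback is wrong, and the inductive sketch is mis-indexed.

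The ``elementary fact that a bounded-above complex of projectives with cohomology vanishing below degree $-j$ is quasi-isomorphic to its stupid truncation $\sigma_{\ge -j}$'' is false: for $R=\z$ and $P^\bullet=(\z\xrightarrow{2}\z)$ in degrees $-1,0$ the cohomology is $\z/2\z$ concentrated in degree $0$, yet $\sigma_{\ge 0}P^\bullet=\z$. The underlying confusion is that ``cohomology vanishes below degree $-j$'' is the condition $M\in\cu_{t^{can}\le j}$, which is strictly weaker than $M\in\cu_{w^\pr\le j}$; adjacency gives $\cu_{w^\pr\ge 0}=\cu_{t^{can}\ge 0}$, not equality of the $\le 0$ classes. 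Your inductive-resolution sketch also attributes termination at degree $-j$ to the hypothesis $M\in\cu_{w^\pr\ge i}$, which is backwards: that hypothesis controls only the top of the resolution, and getting the bottom to stop at $-j$ is precisely the content of $M\in\cu_{w^\pr\le j}$ that you are trying to extract. Likewise, ``$M$ is quasi-isomorphic to its weight complex $t(M)$'' is not a general fact; making it precise here amounts to realizing the weight Postnikov tower of $M\in\cu_{[i,j]}$ as an honest bounded complex of projectives in $D(\rmod)$, which is again the statement in question.

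The paper supplies exactly the reference you were hoping for: by Proposition 1.3.3(3) and (the proof of) Corollary 1.5.7 of \cite{Bon10}, the class $\cu_{[i,j]}$ equals the smallest extension-closed subclass of $\obj\cu$ containing $\bigcup_{i\le s\le j}\cu_{w^\pr=s}$. Since $M\in\cu_{w^\pr\ge i}=\cu_{t^{can}\ge i}$ lies in the essential image of the fully faithful exact embedding $K^-(\projr)\hookrightarrow D(\rmod)$, and since full faithfulness makes extension closures correspond, $M$ lies in the extension closure of $\bigcup_{i\le s\le j}\projr[-s]$ inside $K^-(\projr)$, whose elements are visibly homotopy equivalent to complexes concentrated in degrees $[-j,-i]$. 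This is the rigorous form of your weight-complex intuition.
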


\begin{proof}

\ref{drgen}. 
$\tr{C}=D(\rmod)$ is  well known to be  locally small and compactly generated by $R$; see \S5.8 of \cite{Kra10}.

\ref{class-wPP}. The existence of $w^\pr$ class-generated by $\PP$ follows from Proposition \ref{hprojr}(1).  $w^\pr$ is non-degenerate by Theorem of \cite{Bon22}.  Proposition \ref{hprojr}(2) gives the existence of a $t$-structure $t$ adjacent to $w^\pr$. Moreover,  we have $\cu(R[i],N)=H^{-i}(N)$ for a complex $N\in \obj \cu$; hence the description of $t$ given by Proposition \ref{hprojr}(2)  implies $t=t^{can}$.

Lastly, recall that $\hw^\pr$ consists of the direct summands of all coproduct powers of $P$; this finishes the proof.

\ref{pd=wd}. Recall that $\cu_{[i,j]}$ equals the smallest subcategory of $\cu$ that is closed with respect to extensions and contains $\cup_{m\le j\le n}\cu_{w^\pr=j}$; see Proposition 1.3.3(3) and the proof of Corollary 1.5.7  of \cite{Bon10}.

Now, if $M\cong N^{\bullet}$ with $N^s=0$ for $s<-j$ and $s\gg 0$ and $N^s\in \obj \projr$ for all $s\in\z$ then $M$ belongs to the smallest subcategory of $\cu$ that is closed with respect to extensions and contains $N^s[-s]$; hence $M\in \cu_{w^\pr\le j}$.

Let us verify the converse implication. 
 Since $M$ belongs to 
$\cu_{w^\pr\geq i}
=\cu_{t^{can}\geq i}$, it also belongs to the essential image of the well known  
 exact fully faithful functor  $K^-(\projr)\to D(\rmod)$; see Proposition IIII.2.4.4(b,c1) of \cite{Ver96}. Hence if $M\in \cu_{[i,j]}$ then it belongs to the essential image (with respect to this embedding) of  the smallest subcategory of $K^-(\projr)$ that is closed with respect to extensions and contains $\cup_{i\le s\le j}\projr[-s]$. It clearly follows that ${\mathrm{pd}_{R} M\leq j}$.
\end{proof}


\begin{note}\label{rdwy}
Before passing to examples corresponding to contramodules, let us discuss the relation of our results to \cite{Dwy06}. 

Recall that  the main subject of  
 ibid. was the (essential image via $\pi_*$ of the) localization of $\cu=D(\rmod)$ by $\du=\langle \co(S_0)\rangle^{cl}_{\cu}$, where $S_0$ is a set of $\fprojr$-morphisms.

Applying Proposition \ref{w^PP} 
 one can easily deduce Proposition 3.1 
  ibid. from Theorem \ref{thea}(\ref{wwwa}) (along with Lemma \ref{ll168}(3)).   
  Moreover,  Proposition 3.4 of ibid. is easily seen to follow from our  Theorem \ref{thea}(\ref{iessi}).
\end{note}
\subsection{Contramodulizations of derived categories}\label{Bazz}

In this subsection we will also assume $\tr{C}=D(\rmod)$. 
 Our goal is to describe certain examples to 
 Theorem \ref{thea} that can be found in literature. To this end, we 
  define two types of sets $\mathcal{U}\in \tr{C}_{[0,1]}$ that correspond to 
 so-called contramodules.


\begin{definition}[{\cite[Definition 4.5]{GeL91}}, {\cite[\S1]{BaP20}}]\label{ucontra}
Let $u:R\to U$ be a ring homomorphism.

1. 
 $u$  is said to be a \textit{homological (ring) epimorphism} 
if the obvious homomorphism ${U\otimes_R U\to U}$ is bijective and ${\mathrm{Tor}^R_i(U,U)=0}$ for all ${i>0.}$

2. An $R$-module 
is said to be a \textit{(left) $u$-contramodule} if $\rmod(U,C)=\{0\}=\mathrm{Ext}^1_R(U,C).$

We will write $\rmod_{u-ctra}$ for the (full) subcategory of $\rmod$ that consists of  $u$-contramodules.
\end{definition}

\begin{definition}
 Assume that $R$ is commutative and $C$ is an $R$-modules.
\begin{enumerate}
\item 
 For $s\in R$ we will say that 
$C$ is an \textit{$s$-contramodule} if  $\rmod(R[s^{-1}],C)=\ns=\mathrm{Ext}^1_R(R[s^{-1}],C)$.\footnote{Cf. Lemma 2.1 of \cite{Pos17} for an nice re-formulation of this condition.}
\item Assume that  $I$ is an ideal of $R$. We 
 say that $C$ is an \textit{$I$-contramodule} if  $C$ is an $s$-contramodule for any $s\in I$.

We will write $\rmod_{I-ctra}$ for the corresponding subcategory of $\rmod$.
\end{enumerate}
\end{definition}

\begin{note}
According to Theorem 5.1 of \cite{Pos17}, if $I=\sum_{j\in J}Rs_j$ (for some $s_j\in I$) then $C$ is an $I$-contramodule 
 if and only if it is an $s_j$-contramodule for all  $j\in J$. 
\end{note}

Now we put these two type of contramodules in a single statement.

\begin{theorem}\label{tloc}
Assume that either (i) $u: R\to U$ is a homological (ring) epimorphism  and ${\mathrm{pd}_R U\leq 1}$, or (ii) $R$ is commutative and $I$ is an ideal in it. 
 
In 
case (i) set $\mathcal{U}=\{U\}$ and  $\uu\con=\rmod_{u-ctra}$, 
 and in case (ii) set $\mathcal{U}=\{R[s^{-1}]:\ s\in I\}$ and $\uu\con=\rmod_{I-ctra}$. Take $D=\langle \mathcal{U}  \rangle^{cl}_{\tr{C}}$.
\begin{enumerate}
\item\label{w_trE}  Then the Verdier localization $\tr{E}=\tr{C}/\tr{D}$ is  locally small and the localization functor  $\pi: \tr{C}\to\tr{E}$ 
 possesses an exact right adjoint  $\pi_*$.
 
\item\label{ip01} We have $\mathcal{U}\subset  \tr{C}_{[0,1]}$ (here we take the weight structure $w^{\pr}$ on $\cu$).

Consequently, there exist a smashing weight structure ${w^{\tr{E}}}$ on  $\tr{E}$ and a $t$-structure $t^{\tr{E}}$ adjacent to it
 such that $\pi$ is weight-exact (that is, $w^{\pr}$ descends to $\tr{E}$) and $\pi_*$ is $t$-exact (with respect to $t^{\tr{E}}$ and  $t^{can}$).


\item\label{hearts} 
 The categories $\Ht^{\tr{E}}$ and $\hw^{\tr{E}}$ are equivalent to  
  ${\uu\con}$ and $\proj\, {\uu\con}$ (the subcategory of projective $\uu$-contramodules), respectively. 

\item\label{D_u-ctra} The essential image  $\pi_*(\tr{E})$ 
 consists of those $\rmod$-complexes whose cohomology modules are $\uu$-contramodules.

\item\label{Dsq} The square 
	\begin{equation}\label{econtra}
	\begin{CD}
 \projr @>{i_{\projr}}>> \rmod\\
@VV{\pi^{\projr }}V@VV{
\Delta_{\uu}}V \\
\hw^{\tr{E}} @>{(H_{t^{\tr{E}}}^{\hw^{\tr{E}}})'}>>\uu\con
\end{CD}
\end{equation}
is commutative up to an isomorphism and the functor $(H^{t^{\tr{E}},\hw^{\tr{E}}})'$ yields an equivalence of $\hw^{\tr{E}}$ with the subcategory of projective $\uu$-contramodules. Here $i_{\projr}$ is the embedding 
 $\projr\to \rmod$,    $\pi^{\projr }$ is the restrictions of  $\pi$ to  $\projr$ (with the corresponding decreased target),  $(H_{t^{\tr{E}}}^{\hw^{\tr{E}}})'$ is the composition of the  corresponding restriction of $H_{t^{\tr{E}}}$ to $\hw^{\tr{E}}$ with the  aforementioned  equivalence $\Ht^{\tr{E}}\to
{\uu\con}$, and 
 $\Delta_{\uu}$ is the left adjoint to the 
 embedding $\uu\con\to \rmod$  (cf. Remark \ref{rpos}(\ref{iorig}) below).  
\end{enumerate}
\end{theorem}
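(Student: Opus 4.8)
The plan is to deduce the entire statement by specializing the general machinery of Theorems \ref{thea}/\ref{main} and the structural results of Section~\ref{sweadjt} to the concrete situation $\cu=D(\rmod)$ with $w=w^{\pr}$, once the single nontrivial input is checked: that the generating set $\mathcal{U}$ lies in $\cu_{[0,1]}=\cu_{w^{\pr}\ge 0}\cap\cu_{w^{\pr}\le 1}$. First I would record the general-nonsense ingredients. Part~(\ref{w_trE}) is immediate: by Proposition~\ref{w^PP}(\ref{drgen}) the category $\cu=D(\rmod)$ is locally small and compactly generated, hence well generated, so Theorem~\ref{main}(\ref{Brown_exists}) applies to $\tr{D}=\langle\mathcal{U}\rangle^{cl}_{\cu}$ and gives local smallness of $\tr{E}$ together with the exact right adjoint $\pi_*$ (which is fully faithful by Theorem~\ref{pi_&_G}(\ref{ImG})). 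All the remaining parts will follow once we know $\mathcal{U}\subset\cu_{[0,1]}$, because then by Proposition~\ref{p01}(1) each element of $\mathcal{U}$ is the cone of an $\hw^{\pr}$-morphism, i.e. we are exactly in the situation of Theorem~\ref{main}(\ref{S_0}) with $\mathcal{U}=\mathcal{U}_0$ (the set $S_0$ of $\hw^{\pr}$-morphisms whose cones give $\mathcal{U}$) and $\mathcal{U}_1=\mathcal{U}_2=\varnothing$.

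So the heart of the proof is part~(\ref{ip01}), i.e. verifying $\mathcal{U}\subset\cu_{[0,1]}$ in both cases. By Proposition~\ref{w^PP}(\ref{pd=wd}), for $M$ in $\cu_{w^{\pr}\ge i}=\cu_{t^{can}\ge i}$ the conditions $M\in\cu_{w^{\pr}\le j}$ and $\mathrm{pd}_R M\le j$ coincide; and $\cu_{w^{\pr}\ge 0}=\cu_{t^{can}\ge 0}$ is just the class of complexes concentrated in nonnegative cohomological degree (homological convention: cohomology in negative degrees vanishes). In case~(i), $U$ is an $R$-module, hence sits in $\Ht^{can}\subset\cu_{w^{\pr}\ge 0}$; the hypothesis $\mathrm{pd}_R U\le 1$ then gives $U\in\cu_{w^{\pr}\le 1}$ directly, so $U\in\cu_{[0,1]}$. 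In case~(ii), $R[s^{-1}]$ is likewise a module, so it lies in $\cu_{w^{\pr}\ge 0}$; one must check $\mathrm{pd}_R R[s^{-1}]\le 1$. This is the standard fact that a localization $R\to R[s^{-1}]$ at a single element (equivalently, the colimit of $R\xrightarrow{s}R\xrightarrow{s}\cdots$) has projective dimension at most one over $R$: there is a short exact sequence $0\to K\to \bigoplus_{\Nbb}R\to R[s^{-1}]\to 0$ with $K\cong\bigoplus_{\Nbb}R$ free, obtained from the telescope presentation, exhibiting a length-one free resolution. Hence $R[s^{-1}]\in\cu_{[0,1]}$ for every $s\in I$, so $\mathcal{U}\subset\cu_{[0,1]}$. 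Granting this, Theorem~\ref{main}(\ref{www},\ref{t,t_E}) gives the smashing $w^{\tr{E}}$ with $\pi$ weight-exact (so $w^{\pr}$ descends), and the adjacent $t$-structures; and Theorem~\ref{pi_&_G}(\ref{G_t-exact}) (or Theorem~\ref{thea}(3)) gives that $\pi_*$ is $t$-exact with respect to $t^{\tr{E}}$ and $t^{can}$. This proves~(\ref{ip01}).

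For~(\ref{hearts}): by Theorem~\ref{main}(\ref{pi_*(Ht_E)}), $\pi_*$ identifies $\Ht^{\tr{E}}$ with the full subcategory of $\Ht^{can}\cong\rmod$ consisting of those $N$ with $(\mathcal{U}_0\cup\mathcal{U}_0[-1])\perp N$, equivalently $\cu(\mathcal{U},N)=0=\cu(\mathcal{U}[-1],N)$. Since $\cu(U,N)=\rmod(U,N)$ and $\cu(U[-1],N)=\cu(U,N[1])=\mathrm{Ext}^1_R(U,N)$ for a module $N$ and $U$ of projective dimension $\le 1$ (so higher $\mathrm{Ext}$'s vanish and only these two obstructions matter — this uses $\mathcal{U}\subset\cu_{[0,1]}$), this is exactly the defining condition of a $u$-contramodule in case~(i), and (ranging over $s\in I$, using Definition of $I$-contramodule, or the cited Theorem~5.1 of \cite{Pos17}) of an $I$-contramodule in case~(ii). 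Thus $\Ht^{\tr{E}}\cong\uu\con$. For $\hw^{\tr{E}}$: Theorem~\ref{thea}(\ref{isqa}) (equivalently Proposition~\ref{Ht?Add}(\ref{ProjHt}) applied to $(\tr{E},w^{\tr{E}},t^{\tr{E}})$, which is legitimate since $\tr{E}$ satisfies Brown representability and $w^{\tr{E}}$ is smashing, hence $\hw^{\tr{E}}$ is idempotent complete by Lemma~\ref{ll168}(2)) says $H_{t^{\tr{E}}}^{\hw^{\tr{E}}}$ is an equivalence of $\hw^{\tr{E}}$ onto the category of projective objects of $\Ht^{\tr{E}}$; transporting along $\Ht^{\tr{E}}\cong\uu\con$ gives $\hw^{\tr{E}}\cong\proj\,\uu\con$. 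Part~(\ref{D_u-ctra}) is then Theorem~\ref{pi_&_G}(\ref{u012})(ii) (equivalently Theorem~\ref{thea}(\ref{iessi})), valid precisely because $\mathcal{U}_1=\mathcal{U}_2=\varnothing$: $M\in\pi_*(\tr{E})$ iff $H_{t^{can}}(M[i])\in\pi_*(\Ht^{\tr{E}})$ for all $i$, and $H_{t^{can}}(M[i])$ is the $(-i)$-th cohomology module of $M$, so this says every cohomology module of $M$ is a $\uu$-contramodule. Finally~(\ref{Dsq}) is the specialization of the commutative square \eqref{ecomm} of Theorem~\ref{pi_&_G}(\ref{icomm}) to $w=w^{\pr}$, $t=t^{can}$: under $\Ht^{can}\cong\rmod$ the functor $H_{t^{can}}^{\hw^{\pr}}\colon\hw^{\pr}\to\Ht^{can}$ becomes (up to the equivalence $\hw^{\pr}\cong\projr$ of Proposition~\ref{w^PP}(\ref{class-wPP})) the inclusion $i_{\projr}$; the right vertical map $\tr{H\pi}_*^*$, being the left adjoint to the fully faithful $\pi_*^{\Ht^{\tr{E}}}$ under the identifications $\Ht^{can}\cong\rmod$, $\Ht^{\tr{E}}\cong\uu\con$, is the reflection $\Delta_{\uu}$ onto $\uu\con$; and the bottom map, composed with $\Ht^{\tr{E}}\cong\uu\con$, is $(H_{t^{\tr{E}}}^{\hw^{\tr{E}}})'$, which by the previous part is an equivalence onto $\proj\,\uu\con$. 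Commutativity up to isomorphism is then exactly \eqref{ecomm}.

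The main obstacle is the single computational input $\mathcal{U}\subset\cu_{[0,1]}$ — concretely, $\mathrm{pd}_R R[s^{-1}]\le 1$ in case~(ii) (the telescope resolution) and the bookkeeping with the homological $t$-structure convention so that "projective dimension $\le 1$" really does translate into "$\perp$ with $\mathcal{U}[-1]$ detects exactly $\mathrm{Ext}^1$". Everything else is a mechanical unwinding of Theorems~\ref{thea}, \ref{main}, \ref{pi_&_G} and Propositions~\ref{w^PP}, \ref{p01}, \ref{Ht?Add}.
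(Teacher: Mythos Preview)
Your proposal is correct and follows essentially the same route as the paper's proof: reduce everything to Theorems~\ref{main} and~\ref{pi_&_G} (with $\mathcal{U}_1=\mathcal{U}_2=\varnothing$) once the single input $\mathcal{U}\subset\cu_{[0,1]}$ is verified via Proposition~\ref{w^PP}(\ref{pd=wd}). The only cosmetic difference is that for case~(ii) you supply the telescope resolution of $R[s^{-1}]$ directly, whereas the paper cites Lemma~1.9 of \cite{Pos18} for the same fact; and your parenthetical about higher $\mathrm{Ext}$'s vanishing is harmless but unnecessary, since Theorem~\ref{main}(\ref{pi_*(Ht_E)}) already hands you exactly the two orthogonality conditions $(\mathcal{U}_0\cup\mathcal{U}_0[-1])\perp N$.
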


\begin{proof}
\ref{w_trE}. Since $\tr{C}$ is well generated and $\mathcal{U}$ is a set  (both in case (i) and case (ii)), this is a particular case of Theorem \ref{main}(\ref{Brown_exists}).

\ref{ip01}.
 In both cases $\mathcal{U}\subset  \tr{C}_{t^{can}=0}\subset \tr{C}_{w^{\pr} \ge 0}$; thus 
 we should prove $\mathcal{U}\subset \cu_{w^{\pr} \le 1}$.

According to Proposition \ref{w^PP}(\ref{pd=wd}), the latter 
 property is fulfilled whenever any $U\in \mathcal{U}$ 
 is of projective dimension at most $1$ (over $R$). 
In case (i) this 
 is just our assumption on  $U$, and in case (ii) it is (well-known and) given by Lemma 1.9 of \cite{Pos18}. 

Now we can apply the 
  results of \S\ref{sweadjt}. 
	According to Proposition \ref{p01}(1), 
	 the elements $\mathcal{U}$ can be presented as cones of a set of $\hw^{\pr}$-morphisms.
 We apply  Theorem  \ref{main}(\ref{S_0}) to our $\tr{C}$ and $\uu$ (thus we set $\mathcal{U}_1$ and $\mathcal{U}_2$ to be empty).
Theorem  \ref{main}(
\ref{www}) gives the existence of $w^{\tr{E}}$,  part 
 \ref{t,t_E} of that theorem gives the existence of   $t^{\tr{E}}$.
The functor $\pi_*$ is $t$-exact by Theorem  \ref{pi_&_G}(
\ref{G_t-exact}).

  \ref{hearts}. Theorem  \ref{main}(
  \ref{pi_*(Ht_E)}) implies that  $\pi_*$ restricts to an equivalence $\Ht_E
   \to \Ht^{can}\cap{\uu}^\perp\cap {\uu}[-1]^\perp$.
 Since we have an equivalence $H^0:\Ht^{can}\cong \rmod$, for any $U\in \mathcal{U}$ we have
 $$
 D(\rmod)(U,N)\cong\rmod(U,N),\enskip D(\rmod)(U[-1],N)\cong\mathrm{Ext}^1_R(U,N);
 $$
 hence the composition $H^0\circ \pi_*$ gives an equivalence $\Ht^{\tr{E}}\to
\uu\con$. 

Assertion \ref{D_u-ctra} is a straightforward combination of the preceding ones with Theorem \ref{pi_&_G}(\ref{u012}.ii). 

Lastly, the functor $(H^{t^{\tr{E}},\hw^{\tr{E}}})'$  yields an equivalence of $\hw^{\tr{E}}$ with the subcategory of projective $\uu$-contramodules according to 
Proposition \ref{Ht?Add}(
\ref{ProjHt})  (along with Lemma \ref{ll168}(2)). 
 Combining this statement with  Theorem \ref{pi_&_G}(
 \ref{icomm}) we obtain assertion \ref{Dsq}.
\end{proof}

\begin{note}\label{rpos}
\begin{enumerate}
\item\label{idwg} The authors certainly do not claim that this theorem is completely new. However, it is rather difficult to avoid weight structures both in the formulation and in the proof, and translating weight-exactness into the "weight structure-free" language of existing literature  requires considerable effort.

One of the most "promising" papers 
 related to our one is \cite{DwG02}, 
and the authors will probably work on generalizing some of its results 
 (possibly, applying Theorem \ref{tp01ext} below in the process). We will not say much on ibid. here; yet note that
Propositions 5.2 of ibid. essentially relies on a connectivity assumption (cf. Lemma \ref{ll168}(3) and Proposition \ref{hprojr}(1))
 and its conclusion is closely related to Theorem \ref{thea}(\ref{iessi}). 

Now, 
 $I$-contramodules are the 
 {\it $I$-complete}  (or  $R/I$-complete) $R$-modules  in the language of ibid. (see Remark 6.2 of ibid.; it is assumed that $I$ is a finitely generated ideal in a commutative ring $R$).  This terminology justifies the name of our paper.


\item\label{iorig} 
 Let mention some other related statements and 
 constructions here. In the setting (i) the localization functor $\pi$ was essentially constructed in Theorem 6.1(b) of  \cite{BaP20}, and in the setting (ii) it is given by Theorem 3.4 of \cite{Pos16} whenever 
  $I$ is finitely generated.  
 Next,  the  left adjoint functor $\Delta_u$ to the embedding  $\rmod_{u-ctra}\to \rmod$ (cf. part \ref{Dsq} of our theorem) was provided by Proposition 3.2(b) of \cite{BaP20}; 
 the  left adjoint $\Delta_I$ to the  embedding  $\rmod_{I-ctra}\to \rmod$ was explicitly constructed in Theorem 7.2 of \cite{Pos17} (cf. also the text preceding Remark 7.4 of ibid.) in the case where 
 $I$ is finitely generated, and mentioned in Remark 7.7 of ibid. in the general case. 



\item\label{ivar}
The authors believe that one can construct quite interesting weight-exact localizations via taking the set $\uu$ 
 that does not belong to any of the two types treated in our theorem. 
In particular, it would be interesting to take $\uu=\{U_i\}$, where  $U_i$ are the codomains of some homological epimorphisms from $R$ 
 (cf. case (i) of our theorem). 

Another idea is to consider the localization functor $\tr{E}'=\tr{C}/\tr{D}'\to \tr{E}$; here one may take $\tr{C}$ and $\tr{E}$ as in (version (ii) of) Theorem \ref{tloc} and $\tr{D}'=\langle \mathcal{U}'  \rangle^{cl}_{\tr{C}}$ for the sets  $ \mathcal{U}'\subset  \mathcal{U}$ 
 that correspond to ideals $I'\subset I \subset R$. This functor is easily seen to be weight-exact with respect to the weight structures descended from $w^{\pr}$ (see Remark \ref{rdescends}).

One can certainly combine this observation with Theorem \ref{tp01ext} below. 
\end{enumerate}
\end{note}

\subsection{On "comparing heart-similar categories" and on contramodules in "categories with projective hearts"}\label{scomphearts}

\begin{theorem}\label{tp01ext}
Assume that $w$ and $w'$ are smashing weight structures on well generated triangulated  categories $\tr{C}$ and $\tr{C}'$, respectively, 
 an additive functor $F:\hw\to \hw'$ is an equivalence, a set $U$ equals $\mathcal{U}_0\cup \mathcal{U}_1 \cup \mathcal{U}_2$, where
 $\mathcal{U}_0\subset \tr{C}_{[0,1]}$,   $\mathcal{U}'_0=\cup_{U\in \mathcal{U}}\tif([U])$ (see Proposition \ref{p01}(2)),  
 $\mathcal{U}_1$ (resp. $\mathcal{U}_2$) are  some sets of left (resp. right) $w$-degenerate objects in $\tr{C}$,  and 
$\mathcal{U}'_1$ (resp. $\mathcal{U}'_2$) are  some sets of left (resp. right) $w'$-degenerate objects in $\tr{C}'$ (cf. Theorem \ref{pi_&_G}(\ref{u012})).

Moreover, we take  
 $\mathcal{U}'=\mathcal{U}'_0\cup \mathcal{U}'_1 \cup \mathcal{U}'_2$, 
 $\mathcal{D}=\langle \mathcal{U}   \rangle^{cl}_{\tr{C}}$ and $\mathcal{D}'=\langle \mathcal{U}'   \rangle^{cl}_{\tr{C}'}$.

1.  
 Then the Verdier localizations $\tr{E} =\tr{C}/\tr{D}$ and $\tr{E}' =\tr{C}'/\tr{D}'$ are locally small,  $w$ and $w'$ descend to $\tr{E}$ and $\tr{E}'$, respectively,  and there  exist $t$-structures $t$, $t'$, $t^{\tr{E}}$,  and $t^{\tr{E}'}$ adjacent to weight structures $w$, $w'$,$w^{\tr{E}}$, and $w^{\tr{E}'}$. 

2. The functor $F$ can be extended to an 
 equivalence of the corresponding diagrams $Sq(\pi,w)$ and  $Sq(\pi',w')$ provided by Theorem \ref{pi_&_G}(
 \ref{icomm}), that is, these diagrams along with $F$ fit into a cubical diagram of functors that commutes up to isomorphisms 
such that all the remaining arrows are equivalences. 

3. Assume that both $\mathcal{U}'_1$ and $\mathcal{U}'_2$ are empty. Then  $M\in\obj\tr{C}'$ belongs to $\pi'_*(\tr{E}')$ if and only if $H_{t'}(M[i])\in \pi'_*(\Ht^{\tr{E}'})$ for all $i\in \z$. 
\end{theorem}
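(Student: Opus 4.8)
The plan is to reduce all three assertions to results already established in \S\ref{sweadjt} and \S\ref{w^PD(A)}, using Proposition \ref{p01}(2) as the bridge between the two weighted categories. For part~1, I would first note that each of $\tr{C}$ and $\tr{C}'$ is well generated and $\tr{D}$, $\tr{D}'$ are localizing subcategories generated by the \emph{sets} $\mathcal{U}$, $\mathcal{U}'$ (here one must check that $\mathcal{U}'_0 = \bigcup_{U\in\mathcal{U}_0}\tif([U])$ is indeed a set, which is clear since $\tif$ is a bijection of isomorphism-class collections restricted to a set, and choosing one representative per class keeps it a set). Then Theorem \ref{main}(\ref{Brown_exists}) gives local smallness of $\tr{E}$, $\tr{E}'$ and the right adjoints $\pi_*$, $\pi'_*$; since $\mathcal{U}_0\subset\tr{C}_{[0,1]}$ and $\mathcal{U}'_0\subset\tr{C}'_{[0,1]}$ (the latter by the very definition of $\tif$ in Proposition \ref{p01}(2)), the elements of $\mathcal{U}_0$, $\mathcal{U}'_0$ are cones of $\hw$- resp. $\hw'$-morphisms, so Theorem \ref{main}(\ref{www},\ref{t,t_E}) applies and yields the weight structures $w^{\tr{E}}$, $w^{\tr{E}'}$ and the adjacent $t$-structures $t$, $t'$, $t^{\tr{E}}$, $t^{\tr{E}'}$.

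For part~2, the key point is that, by Proposition \ref{p01}(2), the equivalence $F:\hw\to\hw'$ sends the set $S_0$ of $\hw$-morphisms whose cones constitute $\mathcal{U}_0$ to a set $S_0'=F(S_0)$ of $\hw'$-morphisms whose cones constitute $\mathcal{U}'_0$ (up to isomorphism), so the pair $(\hw,S_0)$ is carried by $F$ to $(\hw',S_0')$. I would then invoke Theorem \ref{pi_&_G}(\ref{icomm}) to identify $Sq(\pi,w)$ with the square \eqref{ecomm} and $Sq(\pi',w')$ with its primed analogue, and Theorem \ref{main}(\ref{pi_*(Ht_E)}) to identify $\Ht^{\tr{E}}$ (resp. $\Ht^{\tr{E}'}$) with the full exact subcategory of $\Ht$ (resp. $\Ht'$) characterized by bijectivity of $H_N(s)$ for $s\in S_0$ (resp. $s\in S_0'$). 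Since $F$ induces (via the Yoneda-type equivalences of Proposition \ref{Ht?Add}(\ref{Ht=Add})) an equivalence $\adfupr(\hw^{op},\ab)\simeq\adfupr(\hw'^{op},\ab)$, hence an equivalence $\Ht\simeq\Ht'$ compatible with the two Yoneda embeddings, and since this equivalence matches the subcategory cut out by $S_0$ with the one cut out by $S_0'=F(S_0)$, it restricts to an equivalence $\Ht^{\tr{E}}\simeq\Ht^{\tr{E}'}$. Finally, Proposition \ref{Ht?Add}(\ref{ProjHt}) identifies $\hw$ (resp. $\hw'$, $\hw^{\tr{E}}$, $\hw^{\tr{E}'}$) with the projectives of $\Ht$ (resp. $\Ht'$, $\Ht^{\tr{E}}$, $\Ht^{\tr{E}'}$), and all functors in the two squares are left adjoints (by the argument in the proof of Theorem \ref{pi_&_G}(\ref{icomm})), so the four equivalences so produced are automatically compatible with the face maps; assembling them gives the commuting-up-to-isomorphism cube.

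Part~3 is essentially a restatement of Theorem \ref{pi_&_G}(\ref{u012}.ii) applied to $\tr{C}'$, $w'$, $\pi'$: the hypothesis $\mathcal{U}'_1=\mathcal{U}'_2=\varnothing$ is exactly what that part requires, and it gives precisely that $M\in\obj\tr{C}'$ lies in $\pi'_*(\tr{E}')$ iff $H_{t'}(M[i])\in\pi'_*(\Ht^{\tr{E}'})$ for all $i\in\z$. I expect the main obstacle to be the bookkeeping in part~2: verifying that the equivalence $\Ht\simeq\Ht'$ coming from $F$ via the two Yoneda-type equivalences genuinely commutes (up to specified isomorphism) with $H_t^{\hw}$ and $H_{t^{\tr{E}}}^{\hw^{\tr{E}}}$ and their primed versions, and that it carries the orthogonality conditions defining $\Ht^{\tr{E}}$ to those defining $\Ht^{\tr{E}'}$ — this is where Proposition \ref{Ht?Add}(\ref{i25}) (identifying $Y\circ H_t^{\hw}$ with the ordinary Yoneda embedding) does the real work, and one has to be careful that the isomorphisms chosen are coherent enough to fill every face of the cube simultaneously.
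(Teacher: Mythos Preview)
Your plan is correct and follows essentially the same route as the paper's own proof: for part~1 both you and the paper apply Theorem~\ref{main}(\ref{Brown_exists},\ref{www}--\ref{t,t_E}) to the two triples (the paper handles the set issue by passing to a skeleton $\mathcal{U}''$ of $\mathcal{U}'$, which is your ``one representative per class''); for part~2 both arguments reduce to showing that $Sq(\pi,w)$ is determined up to equivalence by $(\hw,S_0)$ via Proposition~\ref{Ht?Add}(\ref{i25},\ref{Ht=Add},\ref{ProjHt}) and Theorem~\ref{pi_&_G}(\ref{u012}.i), and then transport along $F$ using Proposition~\ref{p01}(2); and part~3 is in both cases an immediate application of Theorem~\ref{pi_&_G}(\ref{u012}.ii). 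The coherence worry you flag at the end is real but is handled exactly as you suggest, by the fact that each corner and arrow has an intrinsic description in terms of $(\hw,S_0)$ alone.
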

\begin{proof}
1. It suffices to apply Theorem \ref{main}(\ref{Brown_exists},
\ref{www}--\ref{t,t_E}) to the triples $(\cu,w,U)$ and 
$(\cu',w',\mathcal{U''})$; here $\mathcal{U''}$ is a skeleton of $\mathcal{U}'$ (thus $\mathcal{U''}$ is a set). 

2. 
 Choose a set $S_0$ of $\hw$-morphisms such that $\mathcal{U}_0$ consists of their cones; see Proposition \ref{p01}(1). Proposition \ref{p01}(2)
implies that it suffices to verify that the diagram $Sq(\pi,w)$ can be described in terms of $(\hw,S_0)$ (up to an equivalence of the sort that we specified in the formulation of this assertion) completely. Now, the functor $H_t^{\hw}$ is equivalent (in the same sense) to the Yoneda embedding $\hw\to \adfupr (\hw^{op},\ab)$; see Proposition \ref{Ht?Add}(
\ref{i25},\ref{Ht=Add}). 

Next,  Theorem \ref{pi_&_G}(\ref{u012}.i) 
describes in terms of $(\hw,S_0)$ the essential image of the fully faithful restriction 
 $\pi_*^{\hw^{\tr{E}}}$ of $\pi_*$ to 
    $\Ht^{\tr{E}}$ (see Theorem \ref{pi_&_G}(\ref{ImG},
\ref{G_t-exact}); hence the 
 functor 
 $\tr{H\pi}_*^*$  left adjoint to it  is determined by $(\hw,S_0)$ uniquely up to an equivalence. Next, the functor $H_{t^{\tr{E}}}^{\hw^{\tr{E}}}$ is equivalent to the embedding into the abelian category $\Ht^{\tr{E}}$ of its full subcategory of  projective objects; see  Proposition \ref{Ht?Add}(
\ref{ProjHt}) and Lemma \ref{ll168}(2). Lastly, recall that the functor 
  $H_{t^{\tr{E}}}^{\hw^{\tr{E}}}$  is fully faithful by Proposition \ref{Ht?Add}(
  \ref{ProjHt}); hence the 
 functor $\pi_*^{\hw^{\tr{E}}} \circ H_t^{\hw}$ determines $\pi^{\hw}$ uniquely up to an isomorphism.

3. Similarly to the proof of assertion 1, it suffices to apply Theorem \ref{pi_&_G}
(\ref{u012}.ii) to the triple $(\cu',w',\mathcal{U''})$. 
\end{proof}

\begin{note}\label{rdetht}
The proof of Theorem \ref{tp01ext}(2) also demonstrates that the square $Sq(\pi,w)$  is determined (up to an equivalence) by the category $\Ht$ along with the essential image $\pi_*(\Ht^{t^{\tr{E}}})\subset \Ht$. 
\end{note}

Now we 
 combine 
 our theorem with Theorem \ref{tloc}.

\begin{corollary}\label{contra}
Assume that either (i) $u: R\to U$ is a homological ring epimorphism 
 and ${\mathrm{pd}_R U\leq 1}$, or (ii) $R$ is commutative and $I$ is an ideal in it. 
In 
case (i) set $\mathcal{U}_0=\{U\}$ and in the second case  $\mathcal{U}_0=\{R[s^{-1}]:\ s\in I\}$.

Moreover, assume that $w'$ is a smashing weight structure on a well generated category $\tr{C}'$, 
the category $\hw'$ is equivalent to $\projr$  and consider the function $\tif$ corresponding to the equivalence 
 $\hw^\pr\to \hw'$ (that is, we take $\cu=D(\rmod)$, $w=w^\pr$, and apply Proposition \ref{p01}).  
Next, take    $\mathcal{U}'_0=\cup_{U\in \mathcal{U}}\tif([U])$,  
and choose $\mathcal{U}'_1$ (resp. $\mathcal{U}'_2$) to be  some sets of left (resp. right) $w'$-degenerate objects in $\tr{C}'$, $\mathcal{D}'=\langle \mathcal{U}'   \rangle^{cl}_{\tr{C}'}$.

\begin{enumerate}
\item\label{w_trC} Then there exists a $t$-structure $t'$ on   $\tr{C}'$ adjacent to $w'$, and $\Ht'\cong\rmod$. 

\item\label{w_trEc}  The Verdier localization $\tr{E}'=\tr{C}'/\tr{D}'$ is  locally small and the localization functor  $\pi': \tr{C}'\to\tr{E}'$ 
 possesses an exact right adjoint  $\pi'_*$. 

\item\label{w_Ec} $w'$ descends to a smashing weight structure $w^{\tr{E}'}$ on  $\tr{E}'$, and there exists a $t$-structure $t^{\tr{E}'}$
adjacent to it.

 Moreover, the category $\Ht^{\tr{E}'}$  is equivalent to the category of  $u$-contramodules in case (i) and to the category of $I$-contramodules in case (ii), and 
$\hw^{\tr{E}'}$ is equivalent to the  corresponding subcategory of projective contramodules. 

\item\label{isqcupr}
There is a square 
$$\begin{CD}
 \hw' @>{H_{t'}^{\hw'}}>> \Ht'\\
@VV{\pi'^{\hw'}}V@VV{\tr{H\pi}'{}_*^*}V \\
\hw^{\tr{E}'} @>{H_{t^{\tr{E}'}}^{\hw^{\tr{E}'}}}>>\Ht^{\tr{E}'}
\end{CD}$$
of functors that is commutative up to an isomorphism; here $\tr{H\pi}'{}_*^*$ is  the left adjoint to the restriction of $\pi'_*$ to $\Ht'$ and the remaining functors are certain restrictions; see Theorem \ref{pi_&_G}(\ref{icomm}) for more detail. Moreover, this diagram is equivalent to the square (\ref{econtra}) in Theorem \ref{tloc}(\ref{Dsq})
(in the sense specified in Theorem \ref{tp01ext}(2)).

\item\label{iempty} Assume that both $\mathcal{U}'_1$ and $\mathcal{U}'_2$ are empty. Then $M\in\obj\tr{C}'$ belongs to $\pi'_*(\tr{E}')$ if and only if $H_{t'}(M[i])\in \pi'_*(\Ht^{\tr{E}'})$ for all $i\in \z$. 
 \end{enumerate}
\end{corollary}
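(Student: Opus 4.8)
The plan is to derive the whole corollary from a single application of Theorem \ref{tp01ext}, feeding in $(\tr{C},w)=(D(\rmod),w^{\pr})$ on one side and the given $(\tr{C}',w')$ on the other, and then translating the conclusions about the $D(\rmod)$-side through Theorem \ref{tloc}. First I would verify the hypotheses of Theorem \ref{tp01ext}. By Proposition \ref{w^PP}(\ref{drgen},\ref{class-wPP}), the category $D(\rmod)$ is locally small and compactly (hence well) generated, and $w^{\pr}$ is a smashing weight structure with $\hw^{\pr}$ equivalent to $\projr$; composing with the given equivalence $\hw'\cong\projr$ produces an additive equivalence $F\colon\hw^{\pr}\xrightarrow{\sim}\hw'$. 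Put $\mathcal{U}_0=\{U\}$ in case (i) and $\mathcal{U}_0=\{R[s^{-1}]:s\in I\}$ in case (ii), with $\mathcal{U}_1=\mathcal{U}_2=\varnothing$. The one hypothesis deserving attention is $\mathcal{U}_0\subset\tr{C}_{[0,1]}$: this is Theorem \ref{tloc}(\ref{ip01}), and it is exactly here that the assumption $\mathrm{pd}_RU\le1$ (resp. Lemma 1.9 of \cite{Pos18} for $R[s^{-1}]$) is used. Since $\mathcal{U}_0\subset\tr{C}_{[0,1]}$, the bijection $\tif$ of Proposition \ref{p01}(2) is defined on the isomorphism classes appearing in $\mathcal{U}_0$, so $\mathcal{U}'_0=\cup_{U\in\mathcal{U}_0}\tif([U])$ makes sense; together with the chosen sets $\mathcal{U}'_1,\mathcal{U}'_2$ of left (resp. right) $w'$-degenerate objects, and with $\mathcal{D}=\langle\mathcal{U}_0\rangle^{cl}_{\tr{C}}$ (which is precisely the subcategory $\tr{D}$ of Theorem \ref{tloc}) and $\mathcal{D}'=\langle\mathcal{U}'\rangle^{cl}_{\tr{C}'}$, this puts us exactly in the situation of Theorem \ref{tp01ext}.

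Parts \ref{w_trC}--\ref{w_Ec} then follow formally. Theorem \ref{tp01ext}(1) gives the local smallness of $\tr{E}=\tr{C}/\tr{D}$ and $\tr{E}'=\tr{C}'/\tr{D}'$, the descent of $w$ and $w'$ to smashing weight structures, and adjacent $t$-structures $t,t',t^{\tr{E}},t^{\tr{E}'}$; this already yields the $t$-structure $t'$ adjacent to $w'$ in \ref{w_trC}, the local smallness in \ref{w_trEc}, and the descent together with the adjacent $t^{\tr{E}'}$ in \ref{w_Ec}. The exact right adjoint $\pi'_*$ in \ref{w_trEc} is Theorem \ref{main}(\ref{Brown_exists}) applied to $(\tr{C}',w',\mathcal{U}')$. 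For $\Ht'\cong\rmod$ in \ref{w_trC}: on $D(\rmod)$ the $t$-structure adjacent to $w^{\pr}$ is $t^{can}$, whose heart $\Ht^{can}$ is $\rmod$ (Proposition \ref{w^PP}(\ref{class-wPP}), Proposition \ref{hprojr}(2)); Theorem \ref{tp01ext}(2) provides a cube of functors, commuting up to isomorphism, two of whose faces are $Sq(\pi,w^{\pr})$ and $Sq(\pi',w')$ and whose four connecting functors are equivalences, so in particular the connecting functor at the upper-right vertices is an equivalence $\Ht\xrightarrow{\sim}\Ht'$ (one may also argue directly that $F$ induces an equivalence $\adfupr(\hw^{op},\ab)\cong\adfupr((\hw')^{op},\ab)$ and invoke Proposition \ref{Ht?Add}(\ref{Ht=Add})), whence $\Ht'\cong\Ht^{can}\cong\rmod$. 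The heart descriptions in \ref{w_Ec} follow likewise: the connecting functors at the lower vertices are equivalences $\hw^{\tr{E}}\xrightarrow{\sim}\hw^{\tr{E}'}$ and $\Ht^{\tr{E}}\xrightarrow{\sim}\Ht^{\tr{E}'}$, and by Theorem \ref{tloc}(\ref{hearts}) the source categories are $\uu\con$ (namely $\rmod_{u-ctra}$ in case (i), $\rmod_{I-ctra}$ in case (ii)) and $\proj\,\uu\con$; alternatively $\hw^{\tr{E}'}$ is the full subcategory of projectives of $\Ht^{\tr{E}'}$ by Proposition \ref{Ht?Add}(\ref{ProjHt}).

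Finally, part \ref{isqcupr} is immediate: the displayed square is $Sq(\pi',w')$, furnished and shown to commute by Theorem \ref{pi_&_G}(\ref{icomm}), and its equivalence --- in the sense of Theorem \ref{tp01ext}(2) --- with $Sq(\pi,w^{\pr})$, which by Theorem \ref{tloc}(\ref{Dsq}) is the square (\ref{econtra}), is exactly the content of Theorem \ref{tp01ext}(2). Part \ref{iempty} follows from Theorem \ref{tp01ext}(3) --- itself Theorem \ref{pi_&_G}(\ref{u012}.ii) applied to $(\tr{C}',w',\mathcal{U}')$ --- under the hypothesis $\mathcal{U}'_1=\mathcal{U}'_2=\varnothing$. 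Since the argument is an assembly of results already established above, no genuinely hard step arises; the points that call for care are the verification $\mathcal{U}_0\subset\tr{C}_{[0,1]}$ (which is where the projective-dimension hypotheses enter, and which makes $\tif$, hence Theorem \ref{tp01ext}, applicable) and the bookkeeping identification of $\Ht'$ with $\rmod$.
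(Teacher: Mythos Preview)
Your proposal is correct and follows essentially the same approach as the paper: set $(\tr{C},w)=(D(\rmod),w^{\pr})$, invoke Theorem \ref{tloc}(\ref{ip01}) to secure $\mathcal{U}_0\subset\tr{C}_{[0,1]}$, and then read off all assertions from Theorem \ref{tp01ext} combined with Theorem \ref{tloc}. The paper compresses this into a single sentence citing Theorem \ref{tloc}(\ref{ip01}) and Proposition \ref{hprojr}(1) as the inputs that make Theorems \ref{tloc} and \ref{tp01ext} applicable, whereas you spell out the bookkeeping (in particular the identification $\Ht'\cong\rmod$ via the cube of Theorem \ref{tp01ext}(2)), but the substance is identical.
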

\begin{proof}
Theorem \ref{tloc}(\ref{ip01})
 and Proposition \ref{hprojr}(1) allow us to deduce all the statements in question from Theorems \ref{tloc} and \ref{tp01ext}; here we set $\cu=D(\rmod)$, $w=w^\pr$.
\end{proof}

\begin{note}\label{rcontra}
It certainly makes sense to combine Corollary \ref{contra} with the information on contramodules and on the corresponding weight-exact localizations of $D(\rmod)$ (as discussed in Theorem \ref{tloc}) that is available in the literature; cf. Remark \ref{rpos}(\ref{idwg}, \ref{iorig}).

\end{note}

\section{Some examples}\label{Examples}
In \S\ref{u:R->SR} we recall that $u:R\to U$ is a homological ring epimorphism if $U$ is an Ore localization ${\SR}$ of $R$, and  then  ${\mathrm{pd}_R S^{-1}R\leq 1}$ if ${\SR}$ is generated by a countable set of its elements as a right $R$-module (and $S$ does not contain any zero divisors).

In \S\ref{scount} we describe a rather simple example that demonstrate that the countability and the projective dimension $\le 1$ assumptions cannot be 
 lifted (if we want $w^{\pr}$ to descend to $\eu$).

In \S\ref{Stable} we discuss monogenic stable homotopy category examples to our main results. In particular, we demonstrate that some of them generalize certain statements from \cite[\S2]{Bou79}.

\subsection{Ore localizations as homological ring epimorphisms}\label{u:R->SR}

In this section we assume $S$ to be a multiplicative set in a ring $R$ such that the following {\it left Ore} condition is fulfilled:
for any $s\in S$ and $r\in R$ there exist  $s'\in S$ and $r'\in R$ such that $r's=s'r$.

Now let us relate Ore localization to the setting of  version (i) of Theorem \ref{tloc} and corollary \ref{contra}

\begin{proposition}\label{pore}
\begin{enumerate}
\item \label{iore} The well known localization homomorphism $u:R\to U={\SR}$ is a homological ring epimorphism.

\item \label{pd<2} 
If $S$ contains neither left nor right zero divizors and ${\SR}$ is generated by a countable set of its elements as a right $R$-module then  ${\mathrm{pd}_R S^{-1}R\leq 1.}$
\end{enumerate}
\end{proposition}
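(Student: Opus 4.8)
The plan is to prove the two assertions separately; each reduces to a classical property of (left) Ore localizations, the first almost immediately and the second through a ``mapping telescope'' presentation of $U:=\SR$. Throughout I write $u\colon R\to U$ for the localization homomorphism.

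For assertion~\ref{iore} I would invoke the two standard facts about left Ore localization: that the localization functor $M\mapsto S^{-1}M\cong U\otimes_R M$ on left $R$-modules is exact --- equivalently, that $U$ is flat as a right $R$-module --- and that $u$ is a ring epimorphism, equivalently that the multiplication map $U\otimes_R U\to U$ is bijective. The flatness gives $\mathrm{Tor}^R_i(U,N)=0$ for every left $R$-module $N$ and every $i>0$, in particular $\mathrm{Tor}^R_i(U,U)=0$ for $i>0$, and the ring-epimorphism property is precisely the remaining condition in Definition~\ref{ucontra}(1); hence $u$ is a homological ring epimorphism. Both facts are contained in any standard treatment of noncommutative localization.

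For assertion~\ref{pd<2} the strategy is to exhibit $U$ as a filtered colimit of a countable chain of copies of $R$ with injective $R$-linear transition maps, and then apply the telescope short exact sequence. First I would reduce to $S$ being countable: if $\SR=\sum_i x_iR$ with $\{x_i\}$ countable and $x_i=s_i^{-1}r_i$, only the countably many denominators $s_i$ are relevant. Using that $S$ is multiplicative, iterate the Ore condition to choose $t_n\in S$ that is a common left multiple of $s_1,\dots,s_n$, so that $t_{n+1}\in Rt_n$; writing $t_n=d_{n,i}s_i$ one gets $t_n^{-1}d_{n,i}=s_i^{-1}$, hence $x_i\in t_n^{-1}R$ for $n\ge i$, so $\SR=\bigcup_n t_n^{-1}R$. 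Since $S$ contains no zero divisors, $\ker u=0$, so each map $R\to t_n^{-1}R$, $r\mapsto t_n^{-1}r$, is an isomorphism and the inclusions $t_n^{-1}R\hookrightarrow t_{n+1}^{-1}R$ become monomorphisms $R\hookrightarrow R$ (given by $r\mapsto e_{n+1}r$, where $t_{n+1}=e_{n+1}t_n$). Thus $U\cong\varinjlim_n R$ along this chain, and the mapping-telescope sequence
\[
0\longrightarrow\bigoplus_{n\ge 1}R\xrightarrow{\ \iota-\sigma\ }\bigoplus_{n\ge 1}R\longrightarrow U\longrightarrow 0,
\]
where $\iota$ is the identity inclusion of each summand and $\sigma$ sends the $n$-th copy of $R$ to the $(n+1)$-st via the transition map, is exact with free left and middle terms; hence $\mathrm{pd}_R U\le 1$.

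The step I expect to be delicate is the noncommutative bookkeeping of fractions: the natural colimit presentation above identifies $U$ as a colimit of copies of $R$ whose transition maps are \emph{left} multiplications --- which are $R$-linear for the \emph{right}, not the left, module structure --- so one must be careful to set up the chain $\{t_n\}$ and the identification $U\cong\varinjlim_nR$ on the side for which projective dimension is being measured, and in particular to check that a countable generating set of $U$ really is absorbed by the chain on that side. This is exactly where the left Ore hypothesis, the absence of zero divisors in $S$ (which kills the relevant torsion and forces injectivity of the transition maps), and the countable generation of $\SR$ over $R$ all enter. Once this is arranged the conclusion is formal; alternatively one may simply quote the fact that a countably presented flat module has projective dimension at most $1$ and feed in the flatness from assertion~\ref{iore}.
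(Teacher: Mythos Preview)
Your approach to part~\ref{iore} is essentially the same as the paper's: both invoke the multiplication isomorphism $U\otimes_R U\to U$ and flatness of $U$ to obtain the Tor vanishing. (A minor point: under the left Ore condition as stated, the standard conclusion is that $U$ is flat as a \emph{right} $R$-module, which is what you write; the paper says ``left'', citing \cite{Tei03} with a left/right swap in Remark~\ref{sic!SR}(1). Either side of flatness suffices for $\mathrm{Tor}^R_i(U,U)=0$, so the conclusion is unaffected.)

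For part~\ref{pd<2} the paper simply cites Theorem~1.1 of \cite{HuHeTr}, whereas you sketch the telescope argument that underlies such results. Your instinct about the delicate step is exactly right, and in fact your construction as written lands on the wrong side: the chain $t_n^{-1}R$ consists of \emph{right} $R$-submodules of $U$ (since $(t_n^{-1}r)\cdot a=t_n^{-1}(ra)$), the identifications $R\cong t_n^{-1}R$ via $r\mapsto t_n^{-1}r$ are right-linear, and the transition maps $r\mapsto e_{n+1}r$ are right-linear; so your telescope yields a length-$1$ free resolution of $U$ as a \emph{right} $R$-module, while the statement (and its application in Theorem~\ref{tloc}) concerns the left projective dimension. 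The dual chain $Rt_n^{-1}$ would be left-linear, but exhausting $U$ by such submodules requires moving denominators to the right, i.e.\ the \emph{right} Ore condition, which is not assumed. This is precisely the bookkeeping you flag but do not carry out; your fallback to ``countably presented flat implies $\mathrm{pd}\le 1$'' has the same issue, since the flatness you established in part~\ref{iore} is on the right while the countable generation hypothesis is also on the right. The cited theorem of H\"ugel--Herbera--Trlifaj handles this mismatch; if you want a self-contained argument you will need to reconcile the sides explicitly.
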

\begin{proof}
\ref{iore}.  This statement is well known. Since all elements of $U$ are of the form $s^{-1}r$ for some $s\in S$ and $r\in R$, the homomorphism
${U\otimes_R U\to U}$ (that induced by the multiplication in $U$) is easily seen to be bijective. Next, 
$U$ is  flat as a left R-module; see Proposition 1 of \cite{Tei03} along with Remark \ref{sic!SR}(1) below. 
 Consequently, we have ${\mathrm{Tor}^R_i(U,U)=0}$ for all ${i>0.}$

\ref{pd<2}. This is  Theorem 1.1 of \cite{HuHeTr}.
\end{proof}

\begin{note}\label{sic!SR}
1. Clearly, one can replace "left" by "right" and vice versa in all the statements of this section; this corresponds to passing to opposite rings.  In particular, in \cite{Tei03} 
  the right Ore condition and right Ore localizations were considered.
  
2. A rich class of homological ring epimorphisms that is wider than that of Ore localizations is provided by the 
 non-commutative localizations (see \S\ref{shist}); note however that certain additional assumptions are needed to ensure that the corresponding ring epimorphism is homological (see Remark 6.2 of  \cite{KrS10}). 
  In particular, if  $R$ is hereditaty then 
   homological ring epimorphisms  are  precisely the  non-commutative localizations (that are also called  universal localizations); see 
	 Theorem 
	 6.1 of  \cite{KrS10}. 

\end{note}

\subsection{Projective dimension $1$ and countability is important}\label{scount}

Now we will demonstrate that in certain cases the countable generation of ${\SR}$ and its  projective dimension $\le 1$ is necessary for the weight-exactness of the corresponding $\pi$. 

\begin{proposition}
Let $R$ be a domain, $Q$ be the quotient field of $R$; take $\cu=D(\rmod)$ and $\tr{D}=D(Q)\subset D(R)$. 

\begin{enumerate}
\item\label{D(Q)} Then the Verdier localization $\tr{E}=\cu/\tr{D}$  is a locally small category.

Moreover,   $\mathrm{pd}_R Q\leq 1$ and $w^\pr$ descends to $\tr{E}$  if and only if $Q$ is 
 countably generated as an $R$-module.

\item\label{K[x]} Let $K$ be a field, $n\ge 2$; set $R=K[x_1,\dots,x_n]$. 

 Then $Q$ is 
 countably generated over $R$ and $w^\pr$ descends to $\tr{E}$  if and only if  $K$ is countable.
\end{enumerate}
\end{proposition}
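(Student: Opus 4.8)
For part \ref{D(Q)}, the first point is that $\tr{D}=D(Q)$, viewed inside $\cu=D(\rmod)$ via restriction of scalars along the localization map $u\colon R\to Q$, coincides with the localizing subcategory $\langle Q\rangle^{cl}_{\cu}$ generated by the single module $Q$. Indeed, $u$ is a homological ring epimorphism by Proposition \ref{pore}(\ref{iore}), so the restriction-of-scalars functor $D(Q)\to D(R)$ is fully faithful; and since $Q$ is a field, every object of $D(Q)$ is (non-canonically) a coproduct of shifts of $Q$, whence the essential image of $D(Q)$ in $\cu$ is precisely $\langle Q\rangle^{cl}_{\cu}$. Thus $\tr{D}$ is the localizing subcategory of the well generated (indeed compactly generated, see Proposition \ref{w^PP}(\ref{drgen})) category $\cu$ generated by the set $\{Q\}$, and Theorem \ref{main}(\ref{Brown_exists}) gives that $\tr{E}=\cu/\tr{D}$ is locally small.

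For the remaining assertion of part \ref{D(Q)} I would first show that $w^{\pr}$ descends to $\tr{E}$ if and only if $\mathrm{pd}_R Q\le 1$; the conjunction in the statement is then equivalent to the single condition $\mathrm{pd}_R Q\le 1$. The subcategory $\tr{D}=D(Q)$ is semi-simple — over the field $Q$ every morphism of complexes is, up to isomorphism, of the form $\id\oplus 0$, equivalently $D(Q)$ is equivalent to the abelian category of $\Zbb$-graded $Q$-vector spaces (cf. the footnote to Proposition \ref{C'=C/D}) — it is smashing, and each of its objects is isomorphic to a coproduct of copies of the indecomposable objects $Q[n]$, $n\in\Zbb$. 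Hence Proposition \ref{C'=C/D}(\ref{primes}) applies: $w^{\pr}$ descends to $\tr{E}$ if and only if every $Q[n]$ is left or right $w^{\pr}$-degenerate or lies in $\cu_{[s,s+1]}$ for some $s\in\Zbb$. Since $w^{\pr}$ is non-degenerate (Proposition \ref{w^PP}(\ref{class-wPP})) no nonzero object is $w^{\pr}$-degenerate, and as $Q$ is a nonzero object of the heart $\Ht^{can}$ we have $Q\in\cu_{w^{\pr}\ge 0}=\cu_{t^{can}\ge 0}$ but $Q\notin\cu_{w^{\pr}\ge 1}$; so $s=-n$ is the only candidate, and by shift-invariance together with Proposition \ref{w^PP}(\ref{pd=wd}) one gets $Q[n]\in\cu_{[-n,-n+1]}\iff Q\in\cu_{[0,1]}\iff\mathrm{pd}_R Q\le 1$. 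Finally, $\mathrm{pd}_R Q\le 1$ holds exactly when $Q$ is countably generated over $R$: the ``if'' direction is Proposition \ref{pore}(\ref{pd<2}) applied to the multiplicative set $S=R\setminus\{0\}$ (which contains no zero divisors, $R$ being a domain), and the ``only if'' direction is the classical characterization of the domains with $\mathrm{pd}_R Q\le 1$, the so-called Matlis domains.

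For part \ref{K[x]}, by part \ref{D(Q)} it is enough to show that for $R=K[x_1,\dots,x_n]$ with $n\ge 2$ the module $Q=K(x_1,\dots,x_n)$ is countably generated over $R$ if and only if $K$ is countable; indeed, countable generation of $Q$ forces $\mathrm{pd}_R Q\le 1$ by Proposition \ref{pore}(\ref{pd<2}), hence (by part \ref{D(Q)}) that $w^{\pr}$ descends to $\tr{E}$, so the second conjunct of the statement is then automatic. If $K$ is countable then $R$, and so $Q$, is a countable set, hence $Q$ is countably generated over $R$. For the converse I would argue by contradiction: if $Q=\sum_{i\in\Nbb}Rq_i$ with $q_i=p_i/f_i$, $p_i,f_i\in R$, $f_i\neq 0$, then every finite $R$-linear combination of the $q_i$ lies in $R[(f_{i_1}\cdots f_{i_k})^{-1}]$ for some $i_1,\dots,i_k$, so only the (countably many) irreducible factors of the $f_i$ can occur as irreducible divisors of denominators of elements of $\sum_i Rq_i$; but if $K$ is uncountable there are uncountably many pairwise non-associate irreducibles $x_1-a$, $a\in K$, so some $x_1-a_0$ divides no $f_i$, and then $1/(x_1-a_0)\in Q\setminus\sum_{i\in\Nbb}Rq_i$, a contradiction.

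The main obstacle, as I see it, is the implication ``$\mathrm{pd}_R Q\le 1\Rightarrow Q$ countably generated'' invoked in part \ref{D(Q)}: this is a genuine input from commutative and homological algebra, not something produced by the weight-structure machinery, and the precise value of $\mathrm{pd}_R Q$ over a polynomial ring over an uncountable field is notoriously delicate (it can depend on set-theoretic hypotheses, after Osofsky). However, only the coarse bound $\mathrm{pd}_R Q\ge 2$ is needed for $n\ge 2$, and this follows from the cited characterization; the hypothesis $n\ge 2$ itself is there because for $n=1$ the ring $R=K[x_1]$ is a principal ideal domain, so $\mathrm{pd}_R Q\le 1$ (and hence the descent of $w^{\pr}$) holds for any $K$. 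Everything else amounts to routine applications of Propositions \ref{C'=C/D}, \ref{w^PP} and \ref{pore} and of Theorem \ref{main}.
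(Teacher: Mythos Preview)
Your proof is correct and follows essentially the same route as the paper's: identify $\tr{D}$ with $\langle Q\rangle^{cl}_{\cu}$, invoke Theorem \ref{main}(\ref{Brown_exists}) for local smallness, and then use the semi-simplicity criterion of Proposition \ref{C'=C/D}(\ref{primes}) together with the non-degeneracy of $w^{\pr}$ and Proposition \ref{w^PP}(\ref{pd=wd}) to reduce the descent question to $\mathrm{pd}_R Q\le 1$. The external input you call ``the classical characterization of Matlis domains'' is exactly what the paper cites as Theorem~1 of \cite{Kap66}: for a domain $R$, $\mathrm{pd}_R Q\le 1$ if and only if $Q$ is countably generated over $R$; this is a ZFC theorem, so your closing worry about Osofsky-type set-theoretic dependence is unfounded here (those phenomena concern the \emph{exact} value of $\mathrm{pd}_R Q$ once it exceeds $1$, which you never need).

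One genuine, if minor, difference: for part \ref{K[x]} the paper simply cites Theorem~2 of \cite{Kap66} for ``$K$ uncountable $\Rightarrow$ $Q$ not countably generated over $K[x_1,\dots,x_n]$'', whereas you supply a self-contained elementary argument via the uncountable family of non-associate irreducibles $x_1-a$. Your argument is correct (using that $R$ is a UFD) and arguably more transparent than a bare citation; conversely, the paper's citation is shorter. A small wording quibble in part \ref{D(Q)}: ``$s=-n$ is the only candidate'' should read ``$s=-n$ is the largest (hence weakest) candidate'', since any $s\le -n$ satisfies $Q[n]\in\cu_{w^{\pr}\ge s}$; of course the conclusion is unchanged.
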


\begin{proof}
Firstly, note that $\tr{D}=\langle Q\rangle^{cl}_{\cu}$; thus we can apply the main statements of this paper.

\ref{D(Q)}. Hence $\tr{E}$ is locally small by Theorem \ref{main}(\ref{Brown_exists}). 

Next, if $Q$ is (at most) countably generated as an $R$-module then  $\mathrm{pd}_R Q\leq 1$ (see Proposition~\ref{pore}(\ref{pd<2})).
 Hence it remains to apply Theorem~\ref{tloc}(\ref{w_trE}) in case (i).

Conversely, assume that  $Q$ is not countably generated as an $R$-module.  Then the projective dimension of the $R$-module $Q$ is at least $2$ by Theorem 1 of \cite{Kap66}.

Let us prove that $w^\pr$ does not descend to $\tr{E}$.
Since $Q$  is indecomposable (both in $\tr{D}$ and in $\tr{C}$), by  Proposition ~\ref{C'=C/D}(\ref{primes}) it remains to check that $Q$ is neither (left or right) $w^{\pr}$-degenerate nor belongs to $\tr{C}_{[s,s+1]}$ for some $s\in\z$.
Since $w^\pr$ is non-degenerate by Proposition \ref{w^PP}(\ref{class-wPP}), it suffices to verify $Q\notin \tr{C}_{[s,s+1]}$.

Since $R\not\perp Q$,  (by the orthogonality axiom (\ref{aort}) in Definition \ref{dws})  $Q\notin \cu_{w^{\pr}\ge 1}$; thus if  $Q\in \tr{C}_{[s,s+1]}$ then $s\le 0$.
 Since $\mathrm{pd}_R Q> 1$, $Q\notin \cu_{w^{\pr}\le 1}$ by Proposition  ~\ref{w^PP}(\ref{pd=wd}). Consequently, $Q$ cannot belong to $ \tr{C}_{[s,s+1]}$ for $s\le 0$.

\ref{K[x]}. If $K$ is countable then $Q$ is countable as well. Hence $Q$ is 
 countably generated; 
thus  $w^{\pr}$ descends to $\tr{E}$ according to assertion \ref{D(Q)}.

Conversely, if $K$ is not countable then  $Q$ is 
 not countably generated as an $R$-module; see Theorem 2 of  \cite{Kap66}.  Hence $w^\pr$ does not  descend to $\tr{E}$; see assertion \ref{D(Q)}.
\end{proof}

\subsection{Connective monogenic stable homotopy categories}\label{Stable}

Now we describe a vast and popular class of triangulated categories that fulfil the assumptions of Proposition \ref{hprojr}; see Definitions 1.1.4 and
  7.1.1 of \cite{HPS97}.

\begin{definition}\label{cmSH}
$\tr{C}$ is said to be a \textit{connective monogenic stable homotopy category} whenever
\begin{itemize}
\item \tr{C} is endowed with a closed symmetric monoidal structure, compatible with the triangulation (see  Definition A.2.1 of  \cite{HPS97} for the detail);
\item $\tr{C}$ is compactly generated by the tensor unit object~$\oo$;
\item The group $\pi_n \oo=\tr{C}(\oo[n],\oo)$ is zero for  $n<0$ (cf. Remark ~\ref{monogenic}
(\ref{pi_nX}) below).
\end{itemize}
\end{definition}

\begin{note}\label{monogenic}\hspace{1cm}
\begin{enumerate}
\item\label{R=End} Following Proposition \ref{hprojr}, we will write  $R$ for the ring ${\End^{\tr{C}}(\oo)=\pi_0(\oo)}$. Note that this ring is well known to be commutative; see \S1.3.3.1 of \cite{Riano}. 
The existence of the $t$-structure adjacent to the weight structure class generated by $\oo$ (see Proposition \ref{hprojr}(2)) and the equivalence  $\Ht\cong \rmod$ in this case is given by Proposition 7.1.2(c,e) of \cite{HPS97}. 

\item\label{D(R)} It is no wonder that if $R$ is commutative then $D(R)$ is a monogenic stable homotopy category with $\oo=
R$; see Example 1.2.3(c) of \cite{HPS97}.

\item\label{SH} Recall that the stable homotopy category  ${SH}$ is a monogenic stable homotopy category with $\oo=
S^0$ (the sphere spectrum). The  corresponding  weight structure $w^{sph}$ (that is class-generated by $\PP=\{S^0\}$) is called the {\it spherical} one in \cite{Bon21}. Since $\End^{\tr{C}}(S^0)\cong \z$, we have an equivalence (from the category of free abelian groups) $F:\operatorname{FrAb}\to \hw^{sph}$, and a quasi-inverse to this equivalence is given by the restriction to $\hw$ of the singular homology functor (cf. Theorem 4.2.1(2) of \cite{Bon21}). 
The $t$-structure adjacent to $w^{sph}$ is called the Postnikov one, and its heart consists of the Eilenberg-Maclane spectra. 

Now let us consider a localization. For a prime $p\in\Zbb$ set $U=\Zbb[p^{-1}]$ and $G=\Zbb/p\Zbb$. Since $U$ is of $\z$-projective dimension $1$, 
we can take 
the isomorphism class $\tif([U])$ provided by Proposition \ref{p01}; here we consider the category of free abelian groups as the heart of the projective weight structure on $D(\z\text{--}\mathrm{Mod}))$. Moreover, since the only singular homology group of $S^0$ is $\z$ in degree zero, one can easily compute the singular homology of elements of $\tif([U])$ to obtain that $\tif([U])$  is the isomorphism class of the Moore spectrum  $SU$ of the group $U$.\footnote{That is, the elements of $\tif([U])$  have zero singular homology  in non-zero degrees and $U$ in degree zero; cf. Theorem 4.2.1(2) of \cite{Bon21} for 
much more general calculation of a closely related sort.}

Let us demonstrate that  
we can study  the so-called  $SG$-localization construction described in \cite{Bou79} 
 using  Theorem \ref{tp01ext}. 
 The objects of the subcategory ${\tr{D}=\langle \{SU\} \rangle_{SH}^{cl}}\subset SH$ were said to be \textit{$SU$-local} in loc. cit. 
By Proposition 2.4 of 
 ibid.,  $\tr{D}$ coincides with the subcategory of  \textit{$SG$-acyclic} spectra. 
  Hence $SH/\tr{D}$ is equivalent to the subcategory of 
	\textit{$SG$-local} spectra  in $SH$ (via the corresponding functor $\pi_*$).
We obtain that a  spectrum $X\in \obj SH$ is $SG$-local if and only if its homotopy groups are $u$-contramodules (in the sense of Definition \ref{ucontra}; here $u$ is the embedding $\z\to U$); this statement is contained in Proposition 2.5  of \cite{Bou79}. Moreover, one can argue similarly in the case $U=\z[J^{-1}]$, where $J$ is a set of primes, to obtain that for $G=\bigoplus_{p\in J}\z/p\z$ a spectrum $X$ is $SG$-local if and only if  all $\pi_i(X)$ are $u$-contramodules; this fact is given by Proposition 2.6 of ibid. 

 Lastly, note that (in contrast to $D(\rmod)$; see Proposition \ref{w^PP}(\ref{class-wPP})) the category $SH$ does contain non-zero right $w^{sph}$-degenerate objects; see Remark 4.2.3(4) of \cite{Bon21}. Thus 
 one may consider various $\uu_2'$ when applying Corollary \ref{contra} to this category. 

\item\label{pi_nX} Let $E$ be a connective commutative  $S^0$-algebra (in the sense of  \cite[Definition 3.3]{EKMM07}).
 Then the derived category $\DE$ of $E$-modules is endowed with a closed monoidal structure 
 (see 
 Construction 2.11 and Theorem 7.1). In these terms,  $SH\cong \mathscr{D}_{S^0}$; respectively, Proposition 4.1 of ibid. gives a functor 
${-\wedge E: SH\to \DE}$ that is left adjoint to the corresponding forgetful one. Consequently,  $E$ is a unit object and compactly generates $\DE.$

Lastly, is easily seen that the homotopy groups $\pi_n(X)=\DE(E,X)$ of any $X\in\mathrm{Obj}(\DE)$ (see Definition \ref{cmSH}) are functorialy isomorphic to the stable homotopy groups of the underlying spectrum (in $SH$) since for any $n\in \z$ and  $S^n=S[n]\in SH$ we have
\begin{multline*}
\DE(E[n],X)= \DE(S^0\wedge E[n], X) \cong\\\cong \DE(S^n\wedge E, X) \cong SH(S^n,X)=\pi_n(X).
\end{multline*}
\end{enumerate}
\end{note}

\section*{Gratitudes}
The authors are deeply grateful to A.E. Druzhinin for his critically important comments to an earlier version of this text,  and to A. Vistoli for his answer on  \url{mathoverflow.net/questions/346520}.

\end{document}